\numberwithin{equation}{section}
\newtheorem{thm}{Theorem}[section]
\newtheorem{cor}[thm]{Corollary}
\newtheorem{lem}[thm]{Lemma}
\newtheorem{prop}[thm]{Proposition}
\theoremstyle{remark}
\theoremstyle{definition}
\newtheorem{defin}[thm]{Definition}
\newcommand{\Blo}{B_{m, \mathrm{lo \wedge lo}}}
\newcommand{\Bhi}{B_{m, \mathrm{hi \lor hi}}}
\newcommand{\Alol}{A_{\ell, \mathrm{lo \wedge lo}}}
\newcommand{\Nlo}{N_{\mathrm{lo}}}
\newcommand{\cN}{\mathcal{N}}
\newcommand{\Ec}{E_{\mathrm{crit}}}
\newcommand{\ds}{\; ds}
\newcommand{\R}{\mathbb{R}}
\newcommand{\C}{\mathbb{C}}
\newcommand{\RR}{\mathbb{R}^2}
\newcommand{\Sp}{\mathbb{S}}
\renewcommand{\Re}{\mathrm{Re}}
\renewcommand{\Im}{\mathrm{Im}}
\title[Global Schr\"odinger maps]{Global regularity of critical Schr\"odinger maps: 
subthreshold dispersed energy}
\author{Paul Smith}
\date{}
\address{University of California, Berkeley}
\email{smith@math.berkeley.edu}
\thanks{The author was supported by NSF grant DMS-1103877.}
\begin{document}

\begin{abstract}
We consider the energy-critical Schr\"odinger map initial value problem
with smooth initial data from $\R^2$ into the sphere $\Sp^2$.
Given sufficiently energy-dispersed data with subthreshold energy,
we prove that the system admits a unique global smooth solution.
This improves earlier analogous conditional results \cite{Sm10}.
The key behind this improvement lies in exploiting estimates on
the commutator of the Schr\"odinger map and harmonic map heat flows.
\end{abstract}

\maketitle

\tableofcontents

\section{Introduction} \label{S:Introduction}
We consider the
Schr\"odinger map initial value problem
\begin{equation}
\begin{cases}
\partial_t \phi &= \quad \phi \times \Delta \phi \\
\phi(x, 0) &= \quad \phi_0(x)
\end{cases}
\label{SM}
\end{equation}
with $\phi_0 : \R^d \to \Sp^2 \hookrightarrow \R^3$.
This system formally enjoys conservation of energy
\begin{equation}
E(\phi(t)) := \frac{1}{2} \int_{\mathbb{R}^d} \lvert \partial_x \phi(t) \rvert^2 dx
\label{Energy Def}
\end{equation}
and mass
\begin{equation*}
M(\phi(t)) := \int_{\mathbb{R}^d} \lvert \phi(t) - Q \rvert^2 dx
\end{equation*}
where $Q \in \Sp^2$ is fixed.
When $d = 2$, both (\ref{SM}) and (\ref{Energy Def}) are invariant
with respect to the scaling
\begin{equation}
\phi(x, t) \to \phi(\lambda x, \lambda^2 t), \quad \quad \lambda > 0
\label{E:scaling}
\end{equation}
and so when $d = 2$ the system (\ref{SM}) is called energy-critical.
This is the setting we consider in this article.

For the physical significance of (\ref{SM}), see
\cite{ChShUh00, NaStUh03-2, PaTo91, La67}.
The system may also be interpreted as a geometric analogue of the
free linear Schr\"odinger equation (see \cite[Introduction]{Sm10}).
For more general analogues of (\ref{SM}), e.g., for K\"ahler targets
other than $\mathbb{S}^2$, see \cite{DiWa01, Mc07, NaShVeZe07}.
See also \cite{KePoVe00, KeNa05, IoKe11} for connections with
other spin systems.

The local theory of \eqref{SM} is developed in \cite{SuSuBa86, ChShUh00, DiWa01, Mc07}.
Global wellposedness given data with small critical Sobolev norm (in all dimensions
$d \geq 2$) is shown in \cite{BeIoKeTa11}. A conditional extension of this result
that requires assuming an $L^4$ spacetime bound on the solution and 
smallness of a critical Besov norm of the data appears in \cite{Sm10}.
Recently it has been shown \cite{BeIoKeTa11b} that global existence and uniqueness
as well as scattering hold given 1-equivariant data with energy less than $4 \pi$.
In the radial setting (which excludes harmonic maps), \cite{GuKo11} establishes 
global wellposedness at any energy level.
In this article we pursue an extension of our work \cite{Sm10}, removing the $L^4$
boundedness condition.
To state our main results, we first introduce some Sobolev spaces.

For $\sigma \in [0, \infty)$,
let $H^\sigma = H^\sigma(\RR)$ denote the usual Sobolev space of complex-valued
functions on $\RR$.  For any $Q \in \mathbb{S}^2$, set
\begin{equation*}
H_Q^\sigma := \{ f : \RR \to \mathbb{R}^3 \text{ such that } |f(x)| \equiv 1 \text{ a.e. and }
f - Q \in H^\sigma \}
\end{equation*}
This is a metric space with induced distance $d_Q^\sigma(f, g) = \lVert f - g \rVert_{H^\sigma}$.
For $f \in H_Q^\sigma$ we set $\lVert f \rVert_{H_Q^\sigma} = d_Q^\sigma(f, Q)$ for short.
Define also
\begin{equation*}
H^\infty := \bigcap_{\sigma \in \mathbb{Z}_+} H^\sigma
\quad \text{and} \quad
H_Q^\infty := \bigcap_{\sigma \in \mathbb{Z}_+} H_Q^\sigma
\end{equation*}
Throughout $\mathbb{Z}_+ = \{0, 1, 2 \ldots \}$ denotes the nonnegative integers.

These definitions may be naturally extended to any spacetime slab $\RR \times [-T, T]$,
$T \in (0, \infty)$.
For any $\sigma, \rho \in \mathbb{Z}_+$,
let $H^{\sigma, \rho}(T)$ denote the Sobolev space of complex-valued functions on
$\RR \times [-T, T]$ with the norm
\begin{equation*}
\lVert f \rVert_{H^{\sigma, \rho}(T)} := \sup_{t \in (-T, T)}
\sum_{\rho^\prime = 0}^\rho \lVert \partial_t^{\rho^\prime} f(\cdot, t)\rVert_{H^\sigma}
\end{equation*}
and for $Q \in \mathbb{S}^2$ endow
\begin{equation*}
H_{Q}^{\sigma, \rho} := \{ f : \RR \times [-T, T] \to \mathbb{R}^3
\text{ such that } | f(x, t) | \equiv 1
\text{ a.e. and } f - Q \in H^{\sigma, \rho}(T) \}
\end{equation*}
with the metric induced by the $H^{\sigma, \rho}(T)$ norm.  Also, define the spaces
\begin{equation*}
H^{\infty, \infty}(T) = \bigcap_{\sigma, \rho \in \mathbb{Z}_+} H^{\sigma, \rho}(T)
\quad \text{and} \quad
H_Q^{\infty, \infty}(T) = \bigcap_{\sigma, \rho \in \mathbb{Z}_+} H_Q^{\sigma, \rho}(T)
\end{equation*}
For $f \in H^\infty$ and $\sigma \geq 0$ 
we define the homogeneous Sobolev norms as
\begin{equation*}
\lVert f \rVert_{\dot{H}^\sigma} =
\lVert \hat{f}(\xi) \cdot \lvert \xi \rvert^\sigma \rVert_{L^2}
\end{equation*}
Here $\hat{f}$ stands for the Fourier transform in the spatial variables only.
We similarly define $\dot{H}^\sigma_Q$ by first translating $f$ by $Q$.

We now state our main global result.
\begin{thm}[Global regularity]
Fix $Q \in \mathbb{S}^2$. Then there exists $\varepsilon_0 > 0$ such that for all $\phi_0 \in H_Q$ with
$E_0 := E(\phi_0) < \Ec$ and
\begin{equation*}
\sup_{k \in \mathbb{Z}} \lVert P_k \partial_x \phi_0 \rVert_{L^2_x} \leq \varepsilon_0,
\end{equation*}
equation (\ref{SM}) admits a unique global solution $\phi \in C(\mathbb{R} \to H_Q^\infty)$.
\label{T:Main}
\end{thm}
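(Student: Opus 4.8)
The proof proceeds by a gauge reduction followed by a bootstrap. By the local well-posedness theory for \eqref{SM} (\cite{SuSuBa86, ChShUh00, DiWa01, Mc07}), data $\phi_0 \in H_Q^\infty$ with $E_0 < \Ec$ generates a unique smooth solution on a maximal time interval; global existence then follows from a continuation criterion once one exhibits an a priori bound, uniform on compact time intervals, for a suitable critical-regularity norm of $\phi$, and persistence of the higher norms $H_Q^\sigma$ upgrades this to $\phi \in C(\R \to H_Q^\infty)$. The first main step is to pass to the \emph{caloric gauge}: for each fixed $t$, flow $\phi(\cdot, t)$ along the harmonic map heat flow in a new variable $s \geq 0$; since $E_0 < \Ec$, no nonconstant finite-energy harmonic map $\RR \to \Sp^2$ obstructs the flow, so it exists for all $s$, converges to a constant as $s \to \infty$, and obeys bounds depending only on $E_0$. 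Choosing the frame on $\phi^* T\Sp^2$ that is parallel at $s = \infty$ produces differentiated fields $\psi_x, \psi_t, \psi_s$ and connection coefficients $A_x, A_t$ (with $A_s \equiv 0$), and the field $\psi := \psi_x|_{s=0}$ satisfies a cubic-type nonlinear Schr\"odinger equation
\[
(i\partial_t + \Delta)\psi = \cN(\psi),
\]
where $\cN$ is a nonlocal nonlinearity whose coefficients $A_x(s), A_t(s)$ are reconstructed from $\psi$ by integration in the heat variable. This reduces matters to a global-in-time a priori estimate for this NLS.

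The second main step is the bootstrap. Following \cite{BeIoKeTa11}, I would set up the frequency-localized spaces $F_k$ for the solution, $N_k$ for the nonlinearity, and an energy space $G_k$, and prove that on any slab $\RR \times [-T, T]$ on which the solution is smooth one has $\sum_k \lVert P_k \psi \rVert_{F_k}^2 \lesssim \varepsilon_0^2 + E_0 \cdot o(1)$. The decisive structural input is the energy-dispersion hypothesis: the trilinear and higher multilinear estimates governing $\cN$ gain a power of $\sup_k \lVert P_k \partial_x \phi \rVert_{L^2_x}$, which is $\leq \varepsilon_0$ initially and — by a propagation-of-energy-dispersion lemma established alongside the main estimates — remains $O(\varepsilon_0)$ throughout $[-T, T]$. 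This is precisely the mechanism by which a subthreshold-plus-dispersed hypothesis substitutes for global smallness in $\dot H^1$.

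The key new ingredient, and the step I expect to be the main obstacle, is the treatment of the connection coefficients $A_x(s), A_t(s)$ and the transfer of estimates between heat-times. In \cite{Sm10} this transfer was carried out under an assumed $L^4_{t,x}$ spacetime bound on the solution; here one instead exploits that $\psi_x(s)$ simultaneously solves a covariant heat equation in $s$ and is Schr\"odinger-transported in $t$, and one controls the commutator of these two flows. The parabolic smoothing in $s$ must be shown to dominate the derivative loss incurred in the nonlinear terms, uniformly across dyadic frequency scales, so that $A_x, A_t$ — and hence $\cN(\psi)$ — can be estimated in the $N_k$ spaces purely in terms of $E_0$ and $\varepsilon_0$, with no a priori spacetime norm input.

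Getting this commutator bound clean enough to close the bootstrap, together with the simultaneous propagation of energy dispersion, is the crux of the argument. Everything downstream is comparatively routine: once $\sum_k \lVert P_k \psi \rVert_{F_k}^2 \lesssim \varepsilon_0^2$ holds uniformly on compact intervals, one reconstructs $\partial_x \phi$ from $\psi$ via the frame ODEs, deduces that $\phi$ stays bounded in $H_Q^\infty$ (hence cannot blow up), and obtains a global solution; uniqueness is inherited from the local theory, and persistence of regularity gives continuity into $H_Q^\infty$.
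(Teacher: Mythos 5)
Your proposal follows the paper's strategy precisely: reduce via the caloric gauge (exploiting the subthreshold energy to run the harmonic map heat flow), derive the magnetic NLS for the differentiated fields, set up frequency-localized spaces, and close a frequency-envelope bootstrap whose decisive new input—relative to \cite{Sm10}—is control of the commutator $\Psi = \psi_t - i\psi_s$ between the Schr\"odinger and heat flows, which lets one bound the electric potential $A_t$ without the conditional $L^4_{t,x}$ hypothesis. One small imprecision: the paper proves the envelope bound $b_k \lesssim c_k$, whose $\ell^2$-sum is of order $E_0$, not $\varepsilon_0^2$; what energy dispersion buys is the per-frequency smallness $\sup_k b_k \lesssim \varepsilon_0$, which is what makes various pieces of the nonlinearity perturbative. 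Also, the roles of $F_k$ and $G_k$ are swapped in your description—in the paper $G_k$ holds the frequency-localized solution (it contains the local smoothing/maximal function norms) while $F_k$ is the larger space in which bounds propagate along the heat flow—but this does not affect the argument's structure.
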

The subthreshold assumption on the initial data is important only insofar as it is used
to establish certain bounds on the gauge. It may be possible to establish such bounds
without making use of the subthreshold assumption, though we do not pursue this here.

\begin{thm}[Uniform bounds]
Fix $Q \in \mathbb{S}^2$. Let $\sigma_1 \geq 1$.
Then there exists $\tilde{\varepsilon}_0(\sigma_1) \in (0, \varepsilon_0]$ such that for all $\phi_0 \in H_Q$ with
$E_0 := E(\phi_0) < \Ec$ and
\begin{equation*}
\sup_{k \in \mathbb{Z}} \lVert P_k \partial_x \phi_0 \rVert_{L^2_x} \leq \tilde{\varepsilon}_0,
\end{equation*}
the global solution $\phi$ constructed in Theorem \ref{T:Main} satisfies the uniform
bounds
\begin{equation*}
\sup_{t \in \mathbb{R}} \lVert \phi(t) - Q \rVert_{H^\sigma}
\lesssim_\sigma
\lVert \phi_0 - Q \rVert_{H^\sigma},
\quad
1 \leq \sigma \leq \sigma_1
\end{equation*}
\label{T:Uniform}
\end{thm}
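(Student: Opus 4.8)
The plan is to bootstrap the uniform-in-time control of high Sobolev norms from the global theory already in place, using the Schr\"odinger map heat flow / caloric gauge machinery that underlies Theorem~\ref{T:Main}. Recall that the solution $\phi$ constructed there is obtained by passing through the gauge field system: one introduces the heat-flow-regularized map $\widetilde{\phi}(x,t,s)$ solving harmonic map heat flow in the parameter $s$ with $\widetilde{\phi}(\cdot,\cdot,0) = \phi$, picks the caloric gauge, and writes the dynamics in terms of the differentiated field $\psi_m$ (the gauge components of $\partial_m\phi$) and the temporal component $A_t$. The key control coming out of the proof of Theorem~\ref{T:Main} is a global-in-time bound on the $\ell^2$-summed, frequency-localized $L^\infty_t L^2_x$ (and associated Strichartz/$U^2$-$V^2$ type) norms of $\psi_m$ at small energy-dispersion: schematically $\|\psi_m\|_{\ell^2 L^\infty_t L^2_x} + \|\psi_m\|_{\text{(Strichartz)}} \lesssim \varepsilon_0$, together with the standard elliptic bounds for $A_x, A_t$ in terms of $\psi$. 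The point of the present theorem is to upgrade this to: the full inhomogeneous $H^\sigma$ norm of $\phi(t)-Q$ is controlled, uniformly in $t$, by that of the data, for $1 \le \sigma \le \sigma_1$.

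First I would reduce the claim to an a priori estimate on $\|\partial_x\phi(t)\|_{\dot H^{\sigma-1}}$ plus $\|\phi(t)-Q\|_{L^2}$. The $L^2$ piece is handled by conservation of mass $M(\phi(t))$ (which is finite for $H_Q$ data and controls $\|\phi-Q\|_{L^2}$), so the whole content is the homogeneous derivative bound. Next I would translate this into a bound on $\psi_m$: by the equivalence between $\|\partial_x \phi\|_{\dot H^{\sigma-1}}$ and $\|\psi_m\|_{\dot H^{\sigma-1}}$ (valid because the gauge transformation relating the two is a bounded rotation, with the difference controlled by lower-order terms via the heat flow estimates — this equivalence is presumably established in the earlier sections feeding into Theorem~\ref{T:Main}), it suffices to show $\sup_t \|\psi_m(t)\|_{\dot H^{\sigma-1}} \lesssim_\sigma \|\psi_m(0)\|_{\dot H^{\sigma-1}}$. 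Then I would run a Gronwall / continuity argument on the Schr\"odinger-type equation satisfied by $\psi_m$, of the form $(i\partial_t + \Delta)\psi_m = (\text{nonlinearity in } \psi, A_x, A_t)$. Applying a Littlewood--Paley projection $P_k$, using the local smoothing / maximal-function and Strichartz estimates for the magnetic Schr\"odinger propagator that were already developed to prove Theorem~\ref{T:Main}, and the small-energy-dispersion bound $\|\psi\|_{(\text{Strichartz})} \lesssim \varepsilon_0$, one estimates the nonlinearity in the relevant $N_k$-norm by a term that is $\varepsilon_0 \cdot (\text{higher Sobolev norm of }\psi) + (\text{lower-order})$. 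Summing against $\langle 2^k\rangle^{2(\sigma-1)}$ and absorbing the $\varepsilon_0$-term into the left side (legitimate once $\tilde\varepsilon_0$ is chosen small) closes the estimate; the lower-order terms are handled by induction on $\sigma$, with the base case $\sigma = 1$ being the energy bound $E(\phi(t)) = E_0$.

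The dependence $\tilde\varepsilon_0(\sigma_1)$ enters because the number of derivatives distributed by Leibniz on the nonlinearity grows with $\sigma_1$, so the implicit constants in the multilinear estimates (and hence the smallness needed to absorb) degrade with $\sigma_1$; this is exactly why the theorem fixes $\sigma_1$ in advance rather than asserting uniformity for all $\sigma$ simultaneously. I expect the main obstacle to be controlling the \emph{commutator} terms — precisely the terms that motivated the whole paper. When one commutes $P_k$ (or $\partial_x^{\sigma-1}$) past the magnetic covariant structure, or compares the Schr\"odinger evolution of $\psi$ at heat-time $s=0$ with its heat-regularized version, one picks up commutators of the Schr\"odinger map flow with the harmonic map heat flow. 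Naively these look like they lose a derivative or fail to be controlled by $\varepsilon_0$; the resolution is to use the commutator estimates advertised in the abstract, which show these terms are in fact lower-order (gaining smallness from the energy-dispersion hypothesis rather than from raw smallness of the energy). I would therefore organize the argument so that every term in the nonlinearity is placed into one of two bins: (i) a "good" term with an explicit $\varepsilon_0$ factor to be absorbed, or (ii) a "paradifferential/commutator" term estimated by the heat-flow commutator bounds, and then iterate the Gronwall argument. Finally, once the a priori bound $\sup_t\|\psi_m(t)\|_{\dot H^{\sigma-1}} \lesssim_\sigma \|\psi_m(0)\|_{\dot H^{\sigma-1}}$ is in hand, I would translate back to $\phi$ and combine with mass conservation to conclude, noting that persistence of $H^\sigma$ regularity itself (finiteness, not just the bound) is already part of Theorem~\ref{T:Main}'s conclusion that $\phi\in C(\mathbb{R}\to H_Q^\infty)$.
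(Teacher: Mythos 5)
Your outline matches the paper's architecture at the level of ideas: pass to the caloric-gauge derivative fields $\psi_m$, run a continuity argument in frequency-localized spacetime norms, absorb $\varepsilon_0$-small contributions, and locate the key technical difficulty in the commutator $\Psi = \psi_t - i\psi_s$ between the Schr\"odinger and heat flows. The paper implements this through frequency envelopes: the quantitative statement is $b_k(\sigma) \lesssim c_k(\sigma)$ (Theorem~\ref{T:EnvelopeTheorem}), which Corollary~\ref{C:ETcor} converts to $\lVert P_k |\partial_x|^\sigma \partial_m \phi \rVert_{L^\infty_t L^2_x} \lesssim c_k(\sigma)$, and the equivalence in (\ref{c cons}) together with mass conservation then gives the uniform $H^\sigma$ bound. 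Your ``sum against $\langle 2^k\rangle^{2(\sigma-1)}$'' plays the role of the envelope weight $2^{\sigma k}$ in $c_k(\sigma)$, $b_k(\sigma)$.

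Two features of your sketch should be adjusted, as they would not close if taken literally. First, you frame the core estimate as a Gronwall-in-time bound on $\|\psi_m(t)\|_{\dot H^{\sigma-1}}$. At critical scaling this cannot close, because the nonlinearity is not time-integrable against $L^\infty_t L^2_x$ control alone; what closes is a simultaneous bootstrap of the full $G_k(T)$ spacetime norms (Strichartz, local smoothing, maximal function), which is precisely why the envelope $b_k(\sigma)$ is defined in $G_k(T)$ rather than $L^\infty_t L^2_x$. Second, the assertion that the base case ``$\sigma = 1$ is the energy bound $E(\phi(t)) = E_0$'' is not right: the base case is $b_k \lesssim c_k$ (equivalently $\sigma = 0$ in the paper's indexing), which already carries the full $G_k(T)$ content and is the entire output of the bootstrap in \S\ref{S:Main} --- energy conservation enters only to keep the map subthreshold so the caloric gauge exists, and gives $\ell^2_k L^\infty_t L^2_x$ but nothing close to $G_k(T)$. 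Finally, the paper does not handle $\sigma > 0$ by a downward induction on $\sigma$; rather, once $\sum_j b_j^2 \lesssim 1$ is in hand from the $\sigma=0$ case, the higher-$\sigma$ estimate closes in one pass by choosing in each trilinear sum over $Z_1(k), Z_2(k), Z_3(k)$ which factor carries the $2^{-\sigma j}$ weight so that the off-diagonal sums still converge. With those corrections your outline is the paper's proof.
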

To prove Theorems \ref{T:Main} and \ref{T:Uniform}, we first establish estimates at fixed
dyadic scales. In doing so we make use of an important technical tool, frequency envelopes in particular, defined as follows.
\begin{defin}[Frequency envelopes]
Let $\delta > 0$ be fixed.
A positive sequence $\{a_k\}_{k \in \mathbb{Z}}$ is a \emph{frequency envelope} if it belongs to $\ell^2$
and is slowly varying:
\begin{equation}
a_k \leq a_j 2^{\delta \lvert k - j \rvert},
\quad \quad j, k \in \mathbb{Z}
\label{Slowly Varying}
\end{equation}
A frequency envelope $\{a_k\}_{k \in \mathbb{Z}}$ is $\varepsilon$-energy dispersed if it satisfies
the additional condition
\begin{equation*}
\sup_{k \in \mathbb{Z}} a_k \leq \varepsilon
\end{equation*}
\end{defin}
Note in particular that frequency envelopes satisfy the following summation rules:
\begin{align}
\sum_{k^\prime \leq k} 2^{p k^\prime} a_{k^\prime}
&\lesssim (p - \delta)^{-1} 2^{p k} a_k
&p > \delta \label{Sum 1}\\
\sum_{k^\prime \geq k} 2^{-p k^\prime} a_{k^\prime}
&\lesssim (p - \delta)^{-1} 2^{-p k} a_k
&p > \delta \label{Sum 2}
\end{align}
In practice we work with $p$ bounded away from $\delta$, e.g., $p > 2 \delta$ suffices,
and iterate these inequalities only $O(1)$ times.
Therefore in applications we drop the factors $(p - \delta)^{-1}$ appearing in
(\ref{Sum 1}) and (\ref{Sum 2}).

Given initial data $\phi_0 \in H_Q^\infty$, define
for all $\sigma \geq 0$ and $k \in \mathbb{Z}$
\begin{equation}
c_k(\sigma) 
:= 
\sup_{k^\prime \in \mathbb{Z}} 
2^{-\delta \lvert k - k^\prime \rvert} 2^{\sigma k^\prime}
\lVert P_{k^\prime} \partial_x \phi_0 \rVert_{L_x^2}
\label{c Envelope}
\end{equation}
and set $c_k := c_k(0)$ for short.
For $\sigma \in [0, \sigma_1]$ it then holds that
\begin{equation}
\lVert \partial_x \phi_0 \rVert_{\dot{H}_x^\sigma}^2 \sim
\sum_{k \in \mathbb{Z}} c_k^2(\sigma)
\quad\quad \text{and} \quad\quad
\lVert P_k \partial_x \phi_0 \rVert_{L_x^2} \leq c_k(\sigma) 2^{-\sigma k}
\label{c cons}
\end{equation}
Similarly, for $\phi \in H_Q^{\infty, \infty}(T)$, define for all
$\sigma \geq 0$ and $k \in \mathbb{Z}$
\begin{equation}
\gamma_k(\sigma) := \sup_{k^\prime \in \mathbb{Z}} 2^{-\delta \lvert k - k^\prime \rvert}
2^{\sigma k^\prime} \lVert P_{k^\prime} \phi \rVert_{L_t^\infty L_x^2}
\label{gamma Envelope}
\end{equation}
and set $\gamma_k := \gamma_k(1)$.

For technical reasons we construct a solution $\phi$ on a time
interval $(-2^{2 \mathcal{K}}, 2^{2 \mathcal{K}})$ for some given $\mathcal{K} \in \mathbb{Z}_+$
and then proceed to prove bounds that are uniform in $\mathcal{K}$.  We assume
$1 \ll \mathcal{K} \in \mathbb{Z}_+$ is chosen and hereafter fixed.
By the local theory, we may assume that we have a solution 
$\phi \in C([-T, T] \to H_Q^\infty)$ of (\ref{SM}) on the time interval $[-T, T]$ for some
$T \in (0, 2^{2 \mathcal{K}}]$. 
In order to extend $\phi$ to a solution on all of
$(-2^{2 \mathcal{K}}, 2^{2 \mathcal{K}})$ with uniform bounds 
(uniform in $T, \mathcal{K}$)
it suffices
to prove
uniform a priori estimates on
\begin{equation*}
\sup_{t \in [-T, T]} \lVert \phi(t) \rVert_{H_Q^\sigma}
\end{equation*}
for, say, $\sigma$ in the interval $[1, \sigma_1]$, with $\sigma_1 \gg 1$ chosen sufficiently large.

The first step in our approach, carried out in \S \ref{S:Gauge Field Equations},
is to lift the Schr\"odinger map system (\ref{SM})
to the tangent bundle and view it with respect to the caloric gauge.
The lift of (\ref{SM}) becomes a system of coupled magnetic nonlinear Schr\"odinger equations
of the form
\begin{equation}
(i \partial_t + \Delta) \psi_m = B_m + V_m, \quad \quad m = 1, 2 
\label{dSM}
\end{equation}
with initial data $\psi_1(0), \psi_2(0)$.  Here $B_m$ and $V_m$ respectively denote
the magnetic and electric potentials (see (\ref{B Def}) and (\ref{V Def}) for
definitions).
At the level of the tangent bundle the goal is to prove
a priori bounds on $\lVert \psi_m \rVert_{L_t^\infty H_x^\sigma}$,
which we establish by proving stronger frequency-localized estimates.
The proof of Theorem \ref{T:Main} is then completed by transfering
bounds on the derivative fields $\psi_m$ back to bounds on the map $\phi$.

Energy spaces are not sufficient for controlling $P_k \psi_m$, and so we combine
in addition to these a host of Strichartz, local smoothing, and maximal function type spaces into one
space $G_k(T)$ (see \S \ref{S:Function spaces} for precise definitions).
As our goal will be to express control of the $G_k(T)$ norms of $P_k \psi_m$ 
in terms of the energy of the frequency localizations of the initial data,
we introduce the following frequency envelopes.
Let $\sigma_1 \in \mathbb{Z}_+$ be positive.
For $\sigma \in [0, \sigma_1 - 1]$, set
\begin{equation}
b_k(\sigma) := \sup_{k^\prime \in \mathbb{Z}} 2^{\sigma k^\prime} 2^{-\delta \lvert k - k^\prime \rvert}
\lVert P_{k^\prime} \psi_x \rVert_{G_k(T)}
\label{b Envelope}
\end{equation}
By (\ref{Soft Field Space Bounds}) 
these envelopes are finite and in $\ell^2$.
We abbreviate $b_k(0)$ by setting $b_k := b_k(0)$.

We now state the key result for solutions of the gauge field equation (\ref{dSM}) (with the caloric
gauge as the gauge choice).
\begin{thm}
Assume $T \in (0, 2^{2 \mathcal{K}}]$ and $Q \in S^2$.
Choose $\sigma_1 \in \mathbb{Z}_+$ positive.
Let $\varepsilon_0 > 0$
and let $\phi \in H_Q^{\infty, \infty}(T)$
be a solution of the Schr\"odinger map system (\ref{SM}) whose initial data $\phi_0$
has energy $E_0 := E(\phi_0) < E_{\mathrm{crit}}$ and satisfies the energy dispersion condition
\begin{equation}
\sup_{k \in \mathbb{Z}} c_k \leq \varepsilon_0
\label{iED}
\end{equation}
Suppose that the bootstrap hypothesis
\begin{equation}
b_k \leq \varepsilon_0^{-1/10} c_k
\label{Main Bootstrap}
\end{equation}
is satisfied.
Then, for $\varepsilon_0$ sufficiently small,
\begin{equation}
b_k(\sigma) \lesssim c_k(\sigma)
\label{Field Conclusion}
\end{equation}
holds for all $\sigma \in [0, \sigma_1 - 1]$ and $k \in \mathbb{Z}$.
\label{T:EnvelopeTheorem}
\end{thm}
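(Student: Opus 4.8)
The proof closes the bootstrap step by inserting the hypothesis \eqref{Main Bootstrap} into the Duhamel parametrix for the magnetic Schr\"odinger system \eqref{dSM} and estimating the nonlinearity in the spaces $G_k(T)$. Writing $\psi_m(t)=e^{it\Delta}\psi_m(0)-i\int_0^t e^{i(t-s)\Delta}(B_m+V_m)(s)\ds$ and applying the linear $G_k(T)$ bounds together with the estimate controlling the Duhamel term by the source in an appropriate dual space $N_k(T)$, one reduces \eqref{Field Conclusion} to two points: (i) the data bound $\lVert P_k\psi_m(0)\rVert_{L^2_x}\lesssim 2^{-\sigma k}c_k(\sigma)$, which follows from the caloric-gauge reconstruction of $\partial_x\phi_0$ from the derivative fields at heat-time zero together with \eqref{c cons}; and (ii) a nonlinear estimate $\lVert P_k(B_m+V_m)\rVert_{N_k(T)}\lesssim 2^{-\sigma k}\big(\varepsilon_0^{\kappa}\,b_k(\sigma)+c_k(\sigma)\big)$ for some $\kappa>0$. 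Granting (i)--(ii), the bootstrap hypothesis and the energy-dispersion bound \eqref{iED} turn the $b_k(\sigma)$ term into a small multiple of $c_k(\sigma)$, which is then absorbed; the soft a priori finiteness of the $b_k(\sigma)$ recorded in \eqref{Soft Field Space Bounds} is what makes this legitimate. It is cleanest to handle $\sigma=0$ first --- the scaling-critical case --- and to recover $\sigma\in(0,\sigma_1-1]$ afterwards by the same argument carrying the extra weights, using \eqref{Sum 1}--\eqref{Sum 2} and \eqref{Slowly Varying} to collapse the frequency sums produced by the bilinear and higher interactions.

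The electric potential $V_m$ is the benign piece. Schematically it is at least cubic in the derivative fields and the connection $A$ (itself quadratic in $\psi$), so each frequency localization admits a H\"older estimate spread across the Strichartz, local-smoothing, and maximal-function components of $G_k(T)$ in which at least two factors carry $\ell^2$-summable, $\varepsilon_0$-bounded envelopes. Applying \eqref{Main Bootstrap} to those factors and the energy bound $E_0\lesssim 1$ to the rest produces the gain $\varepsilon_0^{\kappa}$ with room to spare, so $V_m$ contributes to the right-hand side of (ii).

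The magnetic potential $B_m$ is where the real work lies. It couples the connection coefficient $A_x$ to $\partial_x\psi_m$ (and $A_t$ to $\psi_m$), and it is convenient to split $B_m=\Blo+\Bhi$ according to whether the connection factor is $\Alo$ (frequency $\ll 2^k$) or $\Ahi$ (comparable to or larger than $2^k$). The dangerous piece is $\Blo$, roughly $\Alo\,\partial_x P_k\psi_m$: there is no frequency-gap gain, and $\Alo$ is controlled only in energy-type norms --- of size $O(1)$, not $O(\varepsilon_0)$. This is exactly the term that forced the a priori $L^4_{t,x}$ hypothesis in \cite{Sm10}. The new ingredient is to compare, at heat-time $s=0$, the Schr\"odinger evolution of the derivative field with its harmonic map heat flow extension and to estimate the commutator of the two flows: the heat flow supplies smoothing in the $s$-variable, the commutator is lower order, and integrating in $s$ against the equations defining the caloric gauge (with the $A_s$ component vanishing as $s\to 0$) rewrites $\Blo$, modulo acceptable errors, as a quantity estimable in $\Nlo$ by energy norms of $\partial_x\phi$ paired with the $b_k$ envelopes --- no $L^4$ bound required. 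The complementary piece $\Bhi$ has either a usable frequency separation between its factors or a spare derivative on the high factor, so it is treated like $V_m$. The subthreshold assumption $E_0<\Ec$ is used precisely here: it furnishes the quantitative a priori control of the caloric gauge ($A_x$, $A_s$, and the geometry of the heat flow) that makes the commutator estimates effective.

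Assembling the three contributions yields, uniformly in $k\in\mathbb{Z}$ and in $T\in(0,2^{2\mathcal{K}}]$, an estimate of the form $b_k(\sigma)\lesssim c_k(\sigma)+\varepsilon_0^{\kappa}\,b_k(\sigma)$; choosing $\varepsilon_0$ small absorbs the last term and gives \eqref{Field Conclusion}. I expect the heat-flow commutator estimate for the $\Blo$ term to be the crux: one must verify that it genuinely replaces the missing $L^4_{t,x}$ smoothing with no endpoint or logarithmic loss, and that the gauge bounds extracted from $E_0<\Ec$ mesh with the frequency-envelope bookkeeping --- in particular that the slowly-varying parameter $\delta$ in \eqref{Slowly Varying} can be chosen small relative to the exponent gains available in the Strichartz and local-smoothing parts of $G_k(T)$.
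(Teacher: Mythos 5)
Your outline of the bootstrap architecture — Duhamel plus the $G_k/N_k$ linear estimate, perturbative estimates on the nonlinearity, and absorption of small multiples of $b_k$ — is correct as far as it goes, but it misplaces the crux of the argument and omits the step that actually closes the bootstrap, so it does not amount to a proof.

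First, you locate the heat-flow/Schr\"odinger-flow commutator in the treatment of $\Blo$. In the paper the commutator $\Psi = \psi_t - i\psi_s$ is the central tool for the \emph{electric} potential, specifically the leading term $A_t\psi$: since $\psi_t(s)$ is not controlled in $F_k$ for $s>0$, one writes $\psi_t = i\psi_s + \Psi$ and then $A_t(s) = -\int_0^\infty \Im(i\bar\psi_s\psi_s)\,ds' - \int_0^\infty \Im(\bar\Psi\psi_s)\,ds'$, exploiting that $\Psi(0)=0$ to gain $\varepsilon$ in the second integral (Corollary~\ref{C:Comm L4}, Lemma~\ref{L:AtL4}). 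Your description of $V_m$ as the benign piece handled by routine H\"older spreading misses that $A_t\psi$ is in fact the hard case and is the technical novelty of the paper. Conversely, $\Blo$ is \emph{not} handled by a commutator trick; after rewriting $P_k\Blo = -i\partial_\ell(\Alol P_k\psi_m) + R$ with $R$ perturbative, it is estimated via the bilinear Strichartz estimate Theorem~\ref{T:BilinearStrichartz} inherited from \cite{Sm10}. The role of the new electric-potential bounds is to verify the \emph{hypothesis} of that theorem, thereby removing the conditional $L^4$ assumption from \cite{Sm10}; they do not directly estimate $\Blo$.

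Second, and more seriously, your claimed end state $b_k(\sigma)\lesssim c_k(\sigma) + \varepsilon_0^\kappa b_k(\sigma)$ followed by absorption is not what the $\Blo$ term produces. The actual output is
\begin{equation*}
b_k \lesssim c_k + \varepsilon b_k + \sum_{j\le k-\varpi}\bigl(b_j c_j c_k + \varepsilon b_j^2 b_k\bigr),
\end{equation*}
and the cross term $c_k\sum_{j\le k-\varpi} b_j c_j$ is not a small multiple of $b_k$: the sum $\sum_j b_j c_j$ has no a priori smallness, and it involves $b$ itself, making the bound circular at first sight. The paper resolves this by squaring, Cauchy--Schwarz, and a Gronwall-type iteration on $B_k := 1 + \sum_{j<k} b_j^2$, yielding $B_{k+1}\le B_k(1+Cc_k^2)$ and hence $B_k\lesssim_{E_0} 1$ because $\sum_k c_k^2\lesssim E_0$. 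This discrete Gronwall step is essential and is entirely absent from your outline; without it the bootstrap does not close. Any correct write-up must both route the commutator estimate through $A_t\psi$ (not $\Blo$) and supply this iteration.
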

We use a continuity argument to prove Theorem \ref{T:EnvelopeTheorem}.
For $T^\prime \in (0, T]$, let
\begin{equation*}
\Psi(T^\prime) = \sup_{k \in \mathbb{Z}} c_k^{-1}
\lVert P_k \psi_m(s = 0) \rVert_{G_k(T^\prime)}
\end{equation*}
Then $\Psi : (0, T] \to [0, \infty)$ is well-defined, increasing, continuous, and satisfies
\begin{equation*}
\lim_{T^\prime \to 0} \Psi(T^\prime) \lesssim 1
\end{equation*}
The critical implication to establish is
\begin{equation*}
\Psi(T^\prime) \leq \varepsilon_1^{-1/10} \implies
\Psi(T^\prime) \lesssim 1
\end{equation*}
which in particular follows from
\begin{equation}
b_k \lesssim c_k
\label{Goal}
\end{equation}
We must also similarly establish
\begin{equation}
b_k(\sigma) \lesssim c_k(\sigma)
\label{Goal2}
\end{equation}
for $\sigma \in (0, \sigma_1 - 1]$ in order to bring under control the higher-order
Sobolev norms.
The bounds (\ref{Goal2}), however, will be seen to follow as an easy
consequence of the proof of (\ref{Goal}), and therefore
throughout this article the emphasis will be upon proving (\ref{Goal}).

\begin{cor}
Given the conditions of Theorem \ref{T:EnvelopeTheorem},
\begin{equation}
\lVert P_k \lvert \partial_x \rvert^\sigma \partial_m \phi 
\rVert_{L_t^\infty L_x^2( (-T, T) \times \RR)}
\lesssim c_k(\sigma)
\label{Main Conclusion}
\end{equation}
holds for all $\sigma \in [0, \sigma_1 - 1]$.
\label{C:ETcor}
\end{cor}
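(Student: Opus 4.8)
The plan is to deduce Corollary \ref{C:ETcor} from Theorem \ref{T:EnvelopeTheorem} by transferring the frequency-localized bounds on the derivative fields $\psi_m$ back to the map $\phi$. Recall that $\partial_m \phi$ is a tangent vector along $\phi$, so in the caloric gauge frame $(e, \ep)$ (or equivalently, identifying the tangent plane with $\C$) we may write $\partial_m \phi = \psi_m^{(e)} e + \psi_m^{(\ep)}\ep$, and the components are exactly governed by the derivative fields $\psi_m$ from \eqref{dSM}, up to the caloric-gauge normalization at $s = 0$. The first step is thus to record the pointwise algebraic relation between $\partial_m \phi$ and $(\psi_m, e)$, together with the fact that $|e| \equiv 1$ and $e$ is controlled by the gauge bounds established earlier (the ones depending on the subthreshold assumption and \eqref{iED}).

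The second step is to handle the case $\sigma = 0$, $k$ fixed: apply $P_k$ to $\partial_m\phi = \Re\big(\overline{(e_1 + i e_2)}\,\psi_m\big)$-type expressions and split into Littlewood--Paley pieces. Because the frame $e$ has its frequencies essentially at or below the energy scale (with the envelope tails controlled by $c$ through the same slowly-varying mechanism), a standard trichotomy (low-high, high-low, high-high) reduces $\|P_k \partial_m\phi\|_{L^\infty_t L^2_x}$ to sums like $\sum_{k'} \|P_{k'}\psi_m\|_{G_{k'}(T)} \cdot (\text{frame factor})$. Here I would use that $G_k(T) \hookrightarrow L^\infty_t L^2_x$ so that $\|P_{k'}\psi_m\|_{L^\infty_t L^2_x} \lesssim b_{k'} \lesssim c_{k'}$ by \eqref{Field Conclusion}, and then invoke the frequency-envelope summation rules \eqref{Sum 1}--\eqref{Sum 2} to collapse the sum to $c_k$. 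The high-high interactions are summed by orthogonality (the output frequency being fixed at $2^k$ while the two inputs range over $k' \gtrsim k$), again producing $\sum_{k' \gtrsim k} c_{k'} \lesssim c_k$.

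The third step is the general $\sigma \in [0, \sigma_1 - 1]$ case: apply $|\partial_x|^\sigma$, distribute derivatives via Leibniz/paraproduct decomposition across the frame and the field, and use the higher-order conclusion \eqref{Goal2}, i.e. $b_k(\sigma) \lesssim c_k(\sigma)$, in place of \eqref{Goal}. One must check that derivatives falling on the frame $e$ cost at most the energy scale per derivative and are absorbed into constants (or into the lower-order envelope factors), so that the dominant contribution is the one with all $\sigma$ derivatives on $\psi_m$, giving $c_k(\sigma)$. The relation $\|\partial_x\phi_0\|_{\dot H^\sigma}^2 \sim \sum_k c_k(\sigma)^2$ from \eqref{c cons} is what ties the right-hand side back to the initial data, but for the stated conclusion \eqref{Main Conclusion} we only need the per-$k$ bound.

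The main obstacle I anticipate is controlling the frame $e$ (equivalently the caloric gauge) at high frequencies and across the whole time interval $(-T,T)$: one needs that the Littlewood--Paley tails of $e - Q$ (or of the connection coefficients) obey envelope-type bounds compatible with $c_k$, so that the paraproduct sums actually converge and reproduce $c_k(\sigma)$ rather than a weaker quantity. This is precisely where the subthreshold energy hypothesis and the energy-dispersion condition \eqref{iED} enter, via the gauge estimates proved in the earlier sections; once those are in hand, the rest is the bookkeeping of Littlewood--Paley trichotomy plus the summation rules \eqref{Sum 1}--\eqref{Sum 2}, which is routine. A secondary, minor point is the normalization of the caloric gauge at $s = 0$ versus the parametrization of $\psi_m$ used in $G_k(T)$; this should only contribute harmless $O(1)$ factors.
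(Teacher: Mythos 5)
Your approach is the right one and is essentially what the paper does (the paper delegates the proof to the analogous section of the earlier work it cites). The pointwise relation $\partial_m\phi = v\,\Re(\psi_m) + w\,\Im(\psi_m)$ from \eqref{Phi Frame Rep} is the starting point, and the paraproduct trichotomy with the embedding $G_k(T)\hookrightarrow L_t^\infty L_x^2$ plus \eqref{Field Conclusion} and the summation rules \eqref{Sum 1}--\eqref{Sum 2} is the correct machinery. You also correctly identify the crux: you need envelope-type control on the Littlewood--Paley pieces $P_{k'}(v - v_\infty)$, $P_{k'}(w - w_\infty)$ to run the high-low and high-high interactions.

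The one place your write-up has a genuine gap is in how that crux is closed. The frequency-localized frame bounds available in the paper (Theorem \ref{Energy Envelope Decay} and Corollary \ref{EED Cor}) control $P_k v$, $P_k w$ in terms of the envelope $\gamma_k(\sigma)$, which is defined from the solution $\phi$ itself via \eqref{gamma Envelope}, not from the data via $c_k(\sigma)$. So you cannot simply ``invoke the gauge estimates from earlier sections'' and then run the paraproduct: the bound you would produce for $\lVert P_k\partial_m\phi\rVert_{L_t^\infty L_x^2}$ would be in terms of (products of) $\gamma$'s, which is circular, since $\gamma_k$ is precisely the quantity you are trying to bound by $c_k$. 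The resolution is an additional bootstrap or continuity argument on $T'\in(0,T]$: assume $\gamma_k \lesssim \varepsilon_0^{-c}c_k$ as a hypothesis, use the frame bounds together with $b_k\lesssim c_k$ and the paraproduct decomposition to show $\gamma_k\lesssim c_k$ with a constant independent of the bootstrap constant, and then close. Once $\gamma_k\lesssim c_k$ is established, your third step (pushing $\lvert\partial_x\rvert^\sigma$ through and using $b_k(\sigma)\lesssim c_k(\sigma)$) goes through as you describe. You flag the frame control as the main obstacle, which is exactly right; you just stop one step short of explaining that the obstacle is a genuine circularity requiring its own continuity argument rather than a straight citation. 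As a small notational remark, the paper's frame is $(v,w)$, not $(\mathbf{e},\mathbf{e}^\perp)$, but the identification you write is correct.
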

In view of the local theory and (\ref{c cons}), this corollary
implies Theorems \ref{T:Main} and \ref{T:Uniform}.

One strategy for proving
(\ref{Goal}), i.e., $b_k \lesssim c_k$, is to show that the nonlinearity is perturbative.
Along these lines, we project
(\ref{NLS}) to frequencies $\sim 2^k$
and apply the linear estimate of
Proposition \ref{MainLinearEstimate}:
\begin{equation}
\lVert P_k \psi_m \rVert_{G_k(T)} \lesssim \lVert P_k \psi_m (0) \rVert_{L_x^2} +
\lVert P_k \cN_m \rVert_{N_k(T)}
\label{main-estimate}
\end{equation}
If the nonlinearity is perturbative in the sense that 
$\lVert P_k \cN_m \rVert_{N_k(T)} \lesssim \varepsilon b_k$,
then (\ref{main-estimate}) implies $b_k \lesssim c_k + \varepsilon b_k$, which establishes
(\ref{Goal}). This is the general approach of \cite{BeIoKeTa11} in the case of small critical
norm. In the dispersed setting, the entirety of $P_k \cN_m$ does not obey such a bound; 
it fails to be perturbative in this sense. Nevertheless, several
pieces of $\cN_m$ are perturbative, and showing this
plays a crucial role in enabling us to gain control over the remaining 
nonperturbative part of the nonlinearity, as in \cite{Sm10}.
The key there was a certain bilinear Strichartz estimate adapted to a magnetic NLS.
In this article we are able to establish stronger bounds on the electric potential $V_m$,
which then puts us into a position to apply the machinery of \cite{Sm10} but without
the conditional $L^4$ condition required there. Hence the most important technical contribution of this article lies in
\S \ref{SS:Electric}. This improvement also leads to some slight technical simplifications over \cite{Sm10}
in establishing \eqref{Goal2} for $\sigma > 0$. We remark that it would be interesting to
extend the results of \cite{Sm10} in a direction other than the one we take in this article;
in particular, one expects global wellposedness under the assumption of an $L^4$-type bound
even if the data is not energy-dispersed.

\section{Gauge selection} \label{S:Gauge Field Equations}

We begin with some
constructions valid for any smooth function $\phi : \RR \times (-T, T) \to \mathbb{S}^2$.
Space and time derivatives of $\phi$ are denoted by
 $\partial_\alpha \phi(x, t)$, where $\alpha = 1, 2, 3$ ranges
over the spatial variables $x_1, x_2$ and time $t$ with $\partial_3 = \partial_t$.

Select a smooth orthonormal frame
$(v(x, t), w(x, t))$ for $T_{\phi(x, t)} \mathbb{S}^2$, i.e.,
smooth functions $v, w: \RR \times (-T, T) \to T_{\phi(x, t)} \mathbb{S}^2$
such that for each point $(x, t)$ in the domain the vectors
$v(x, t), w(x, t)$ form an orthonormal basis of $T_{\phi(x, t)} \mathbb{S}^2$.
As a matter of convention we assume that $v$ and $w$ are chosen so that
$v \times w = \phi$.

We define the so-called derivative fields $\psi_\alpha$
by writing $\partial_\alpha \phi$ with respect to $(v, w)$ and 
identifying $\R^2$ with $\C$:
\begin{equation}
\psi_\alpha := v \cdot \partial_\alpha \phi + i w \cdot \partial_\alpha \phi
\label{Derivative Field}
\end{equation}
In other words, $\psi_\alpha$ is determined by
\begin{equation}
\partial_\alpha \phi = v \; \Re(\psi_\alpha) + w \; \Im(\psi_\alpha)
\label{Phi Frame Rep}
\end{equation}
Through this identification,
the Levi-Civita connection on $\mathbb{S}^2$ pulls back to
a covariant derivative on $\mathbb{C}$, denoted by
\begin{equation*}
D_\alpha := \partial_\alpha + i A_\alpha
\end{equation*}
where the real-valued connection coefficients $A_\alpha$
are determined via
\begin{equation}
A_\alpha = w \cdot \partial_\alpha v
\label{Connection Coefficient}
\end{equation}
Due to the fact that the Levi-Civita connection on $\mathbb{S}^2$ is torsion-free,
the derivative fields satisfy the compatibility relations
\begin{equation}
D_\beta \psi_\alpha = D_\alpha \psi_\beta
\label{Torsion Free}
\end{equation}
A straightforward calculation (which uses the fact that the sphere
has constant curvature) shows
\begin{equation*}
\partial_\beta A_\alpha - \partial_\alpha A_\beta =
\Im( \psi_\beta \overline{\psi_\alpha}) =: q_{\beta \alpha}
\end{equation*}
so that the curvature of the connection is given by
\begin{equation}
[D_\beta, D_\alpha] :=
D_\beta D_\alpha - D_\alpha D_\beta = i q_{\beta \alpha}
\label{Curvature}
\end{equation}
Suppose $\phi$ is a smooth solution of the Schr\"odinger map
system (\ref{SM}). This system lifts to
\begin{equation}
\psi_t = i D_\ell \psi_\ell
\label{Psi_t Equation}
\end{equation}
because
\begin{equation*}
\phi \times \Delta \phi = J(\phi) (\phi^* \nabla)_j \partial_j \phi
\end{equation*}
where $J(\phi)$ denotes the complex structure $\phi \times$
and $(\phi^* \nabla)_j$ the pullback of the Levi-Civita connection
$\nabla$ on the sphere. Here we are using the convention that repeated Roman indices 
are summed over the spatial variables. In fact, throughout we consistently use Roman indices
only for spatial variables and Greek indices for variables that are either spatial or temporal.

Using (\ref{Torsion Free}) and (\ref{Curvature}) in (\ref{Psi_t Equation}) yields
\begin{equation*}
D_t \psi_m = i D_\ell D_\ell \psi_m + q_{\ell m} \psi_\ell
\end{equation*}
which is equivalent to the nonlinear Schr\"odinger equation
\begin{equation}
(i \partial_t + \Delta) \psi_m =
\cN_m
\label{NLS}
\end{equation}
with nonlinearity $\cN_m$ defined by the formula
\begin{equation*}
\cN_m 
:= 
-i A_\ell \partial_\ell \psi_m - i \partial_\ell (A_\ell \psi_m)
+ (A_t + A_x^2) \psi_m - i \psi_\ell \Im(\overline{\psi_\ell} \psi_m)
\end{equation*}
We split this nonlinearity as a sum $\cN_m = B_m + V_m$ with $B_m$ and $V_m$ defined by
\begin{equation}
B_m 
:= - i \partial_\ell (A_\ell \psi_m) - i A_\ell \partial_\ell \psi_m \label{B Def}
\end{equation}
and
\begin{equation}
V_m := (A_t + A_x^2) \psi_m - i \psi_\ell \Im(\overline{\psi_\ell} \psi_m)
\label{V Def}
\end{equation}
We call $B_m$ the magnetic part of the potential and $V_m$ the electric part.
The differentiated Schr\"odinger map system then takes the form
\begin{equation}
\left\{
\begin{array}{ll}
(i \partial_t + \Delta) \psi_m &= B_m + V_m \\
D_\alpha \psi_\beta &= D_\beta \psi_\alpha \\
\partial_\alpha A_\beta - \partial_\beta A_\alpha &= \Im(\psi_\alpha \overline{\psi_\beta})
\end{array}
\right.
\label{DSM System}
\end{equation}
A solution $\psi_m$ of (\ref{DSM System}) cannot be determined uniquely without
first choosing an orthonormal frame $(v, w)$.  Changing a given choice
of orthonormal frame induces a gauge transformation that modifies
the derivative fields and connection coefficients according to
\begin{equation*}
\psi_m \to e^{-i \theta} \psi_m,
\quad\quad
A_m \to A_m + \partial_m \theta
\end{equation*}
The system (\ref{DSM System}) is invariant
with respect to such gauge transformations.

Gauges were first used to study (\ref{SM}) in \cite{ChShUh00}.
The caloric gauge, which is the gauge we select,
was introduced by Tao in \cite{Trenorm} in the setting of wave maps
into hyperbolic space.
In the series of unpublished papers \cite{T3, T4, T5, T6, T7},
Tao used this gauge in establishing
global regularity of wave maps
into hyperbolic space.  
In his unpublished note \cite{TSchroedinger},
Tao also suggested the caloric gauge
as an alternative to the Coulomb gauge in studying Schr\"odinger maps.
The caloric gauge was first used in the Schr\"odinger maps problem
by Bejenaru, Ionescu, Kenig, and Tataru in \cite{BeIoKeTa11} to establish global well-posedness
in the setting of initial data with sufficiently small critical norm.
We recommend \cite{Trenorm, T4, TSchroedinger, BeIoKeTa11} for background
on the caloric gauge and for helpful heuristics.

\begin{thm}[The caloric gauge]
Let $T \in (0, \infty)$, $Q \in \Sp^2$, and let $\phi(x, t) \in H_{Q}^{\infty, \infty}(T)$
be such that $\sup_{t \in (-T, T)} E(\phi(t)) < E_{\mathrm{crit}}$.
Then
there exists a unique smooth extension 
$\phi(s, x, t) \in C([0, \infty) \to H_Q^{\infty, \infty}(T))$
solving the covariant heat equation
\begin{equation}
\partial_s \phi = \Delta \phi + \phi \cdot \lvert \partial_x \phi \rvert^2
\label{Covariant Heat}
\end{equation}
and with $\phi(0, x, t) = \phi(x, t)$.  Moreover, 
for any given choice of a (constant) orthonormal basis $(v_\infty, w_\infty)$
of $T_Q \Sp^2$,
there exist smooth functions
$v, w: [0, \infty) \times \R^2 \times (-T, T) \to \Sp^2$ such that at each point $(s, x, t),$ the set
$\{v, w, \phi\}$ naturally forms an orthonormal basis for $\R^3$, the
gauge condition
\begin{equation}
w \cdot \partial_s v \equiv 0
\label{Gauge Condition}
\end{equation}
is satisfied, and
\begin{equation}
\lvert \partial_x^\rho f(s) \rvert \lesssim_\rho \langle s \rangle^{-(\lvert \rho \rvert + 1)/2}
\label{Soft Decay Bounds}
\end{equation}
for each $f \in \{ \phi - Q, v - v_\infty, w - w_\infty \}$, multiindex $\rho$, and $s \geq 0$.
\end{thm}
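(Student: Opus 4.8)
The plan is to construct the caloric gauge in the standard two-stage fashion: first solve the harmonic map heat flow to obtain the extension $\phi(s,x,t)$, then construct the frame by parallel transport along the $s$-direction starting from $s = \infty$.

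For the \emph{heat flow}, I would view \eqref{Covariant Heat} as a semilinear parabolic system for $\phi - Q$. Since $\sup_t E(\phi(t)) < E_{\mathrm{crit}}$, where $\Ec$ is the energy of the first nontrivial harmonic map $\Sp^2 \to \Sp^2$ (namely $4\pi$), the subthreshold hypothesis rules out finite-time bubbling; this is the well-known dichotomy for two-dimensional harmonic map heat flow (Struwe, Ding--Tian, Topping, Qing--Tian). Hence for each fixed $t$ the flow exists globally in $s$, is smooth, and converges to $Q$ as $s \to \infty$. The quantitative decay \eqref{Soft Decay Bounds} for $f = \phi - Q$ is obtained by a bootstrap: an energy/Bochner-type argument gives $\|\partial_x \phi(s)\|_{L^2_x}^2 \lesssim \langle s\rangle^{-1} E_0$-type decay of the tension field, and then parabolic smoothing (interior estimates for the heat semigroup, applied to $\partial_x^\rho \phi$ and iterated) upgrades this to the pointwise bounds $|\partial_x^\rho \phi(s)| \lesssim \langle s\rangle^{-(|\rho|+1)/2}$; uniformity in $t$ follows because all constants depend only on $E_0 < \Ec$ and on the $H^{\infty,\infty}(T)$ norm of the data. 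Smooth dependence on $t$ (hence $\phi(\cdot,\cdot,t) \in C([0,\infty) \to H_Q^{\infty,\infty}(T))$) comes from differentiating the flow equation in $t$ and running the same parabolic estimates on $\partial_t^{\rho'}\phi$.

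For the \emph{frame}, condition \eqref{Gauge Condition} says exactly that $(v,w)$ is parallel along the $s$-lines with respect to the pulled-back Levi-Civita connection; equivalently $A_s := w \cdot \partial_s v \equiv 0$. Because $\phi(s) \to Q$ with the decay above, I would \emph{impose the boundary condition at $s = \infty$}, setting $v(\infty) = v_\infty$, $w(\infty) = w_\infty$, and solving the linear ODE (in $s$, pointwise in $(x,t)$)
\begin{equation*}
\partial_s v = -(\partial_s\phi \cdot v)\,\phi, \qquad \partial_s w = -(\partial_s\phi \cdot w)\,\phi
\end{equation*}
backwards from $s = \infty$. The decay $|\partial_s \phi(s)| \lesssim \langle s\rangle^{-1}$ is \emph{not} integrable, so one must exploit structure: write $v = \phi \times w$ and reduce to a single scalar/rotation ODE, or observe that the relevant combination decays like $|\partial_x\phi|^2 + |\partial_s\phi|$ with enough room after integrating the tension decay more carefully (the heat flow in fact gives $\int_0^\infty \|\partial_s\phi(s)\|_{L^\infty_x}\,ds < \infty$ once the decay exponent is sharpened past $-1$ for $s$ large, using $\partial_s\phi = \Delta\phi + \phi|\partial_x\phi|^2$ and the extra decay of $\Delta\phi$). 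This yields existence, uniqueness, and smoothness of $(v,w)$, and the bounds \eqref{Soft Decay Bounds} for $v - v_\infty$ and $w - w_\infty$ follow by differentiating the frame ODE in $x$ (and $t$) and feeding in the already-established bounds on $\partial_x^\rho \phi$.

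The main obstacle is the \emph{borderline non-integrability} at $s = \infty$: the naive decay $\langle s\rangle^{-1}$ of $\partial_s\phi$ is exactly critical for the frame ODE, so closing the construction requires squeezing out a little extra decay of the tension field for large $s$ (e.g.\ via a monotonicity/entropy argument for the heat flow near the constant map, or via the exponential relaxation once $\phi(s)$ enters a small $H^1$-ball around $Q$ where \eqref{Covariant Heat} linearizes to the heat equation with a spectral gap on $\RR$... actually on $\RR$ there is no gap, so one relies on the algebraic improvement $\|\partial_s\phi(s)\|_{L^\infty} \lesssim \langle s\rangle^{-1-\kappa}$ for some $\kappa > 0$ coming from $L^2$-decay plus smoothing). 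Establishing this sharpened large-$s$ decay, uniformly in $t$, is the crux; everything else is parabolic bootstrapping of a by-now-standard type, and I would cite \cite{BeIoKeTa11}, \cite{Trenorm}, \cite{TSchroedinger} for the template.
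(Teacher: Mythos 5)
The paper does not actually prove this theorem; it is stated and cited to the prior literature (\cite{Trenorm, TSchroedinger, BeIoKeTa11} and the author's earlier \cite{Sm09, Sm10}), so there is no in-paper argument to compare against directly.  Your overall two-stage plan (solve the harmonic map heat flow using the subthreshold energy assumption to preclude bubbling, then construct the frame by parallel transport along $s$ with data at $s = \infty$) is exactly the standard construction used in those references.  However, the specific obstruction you identify as ``the crux'' is a false alarm, and some of the intermediate claims are off.

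First, you assert $\lVert \partial_x \phi(s) \rVert_{L^2_x}^2 \lesssim \langle s\rangle^{-1} E_0$ from an energy/Bochner argument. This is not true and not needed: the energy $E(\phi(s)) = \tfrac12 \lVert \partial_x\phi(s)\rVert_{L^2}^2$ is monotone nonincreasing but does not go to zero; what the global heat flow of finite-mass data (i.e.\ $\phi_0 - Q \in L^2$) gives, after parabolic smoothing iterated on $\partial_x^\rho(\phi - Q)$, is precisely the pointwise $L^\infty$ decay (\ref{Soft Decay Bounds}), not $L^2$ decay of the gradient.  Second, and more importantly, once one has (\ref{Soft Decay Bounds}) for $\phi - Q$ there is no borderline non-integrability issue for the frame ODE.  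From the heat equation $\partial_s \phi = \Delta\phi + \phi\,\lvert\partial_x\phi\rvert^2$ and (\ref{Soft Decay Bounds}) with $\lvert\rho\rvert = 2$ and $\lvert\rho\rvert = 1$, one reads off $\lvert\partial_s\phi(s)\rvert \lesssim \langle s\rangle^{-3/2} + \langle s\rangle^{-2} \lesssim \langle s\rangle^{-3/2}$ directly; $\partial_s\phi$ is second order in $\partial_x$, so it decays one power of $\langle s\rangle^{1/2}$ faster than $\partial_x\phi$.  This is already integrable in $s$, so the linear ODE $\partial_s v = -(v\cdot\partial_s\phi)\,\phi$ integrated backwards from $s=\infty$ closes without any ``sharpened decay'' or spectral-gap input. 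You do half-acknowledge this parenthetically, but then still present the extra decay as a delicate point requiring additional work; it is not, it is the immediate consequence of the parabolic estimates you have already run.  The genuine logical order is simply: (i) global existence of the heat flow (subthreshold energy rules out bubbling); (ii) the soft decay bounds (\ref{Soft Decay Bounds}) for $\phi - Q$ from parabolic smoothing, and uniformity in $t$ from differentiating the flow in $t$; (iii) integrability of $\partial_s\phi$ as a corollary; (iv) the frame ODE and its differentiated versions to get (\ref{Soft Decay Bounds}) for $v - v_\infty$ and $w - w_\infty$.  With those corrections the proposal is sound and matches the approach in the cited references.
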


We now record a couple of the technical bounds from \cite{Sm09}, which will
be useful later on in controlling terms along the heat flow.
\begin{thm}
The following bounds hold:
\begin{align}
\int_0^\infty
\sup_{x \in \mathbb{R}^2} \lvert \psi_x(s, x) \rvert^2
ds
&\lesssim_{E_0} 1
\label{ek4}
\\
\lVert A_x(s) \rVert_{L^2_x(\mathbb{R}^2)}
&\lesssim_{E_0} 1
\label{A_x Bound}
\end{align}
\end{thm}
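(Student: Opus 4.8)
The statement to prove bundles together two estimates from \cite{Sm09}: the integrated sup-bound \eqref{ek4} on $\psi_x$ along the heat flow, and the uniform-in-$s$ bound \eqref{A_x Bound} on $\lVert A_x(s)\rVert_{L^2_x}$. Since these are cited from prior work, the "proof" here is really a bootstrap/parabolic-regularity argument recalled in outline, resting on the caloric gauge structure and on the energy hypothesis $E_0 < \Ec$.

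\begin{proof}[Proof sketch]
The plan is to run a parabolic bootstrap along the harmonic map heat flow \eqref{Covariant Heat}, exploiting that the caloric gauge is built precisely so that the heat-flow direction is gauged away (condition \eqref{Gauge Condition}, i.e.\ $A_s \equiv 0$). First I would recall that in the caloric gauge the derivative fields $\psi_x(s)$ and $\psi_s$ satisfy a covariant heat equation: differentiating \eqref{Covariant Heat} and using the compatibility relations \eqref{Torsion Free} together with the curvature identity \eqref{Curvature} gives a system of the schematic form $\partial_s \psi_m = D_\ell D_\ell \psi_m + (\text{curvature})\cdot\psi$, with $\psi_s = D_\ell \psi_\ell$. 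Standard energy estimates for this covariant heat equation, combined with the conservation of energy $E(\phi(s)) \le E_0$ along the flow (which is where the $\lesssim_{E_0}$ dependence enters), yield decay of $\lVert \psi_x(s)\rVert_{L^2_x}$; more precisely one gets $\lVert \psi_x(s)\rVert_{L^2_x}^2 \lesssim E_0$ uniformly and $\int_0^\infty \lVert \partial_x \psi_x(s)\rVert_{L^2_x}^2\, ds \lesssim E_0$ from integrating the energy identity. The subthreshold condition $E_0 < \Ec$ is what prevents blowup of the heat flow and guarantees it converges to a constant map (no nontrivial harmonic maps below the threshold enter), so all these bounds are global in $s$.

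For \eqref{ek4}, the point is to upgrade the $L^2_x$ control to an $L^\infty_x$ control and integrate in $s$. I would use the smoothing of the heat semigroup: writing Duhamel for the covariant heat equation and using Gagliardo--Nirenberg / Sobolev embedding in two dimensions, $\lVert \psi_x(s)\rVert_{L^\infty_x}^2 \lesssim \lVert \psi_x(s)\rVert_{L^2_x}\lVert \partial_x^2 \psi_x(s)\rVert_{L^2_x}$ (or an interpolation between $L^2$ and $\dot H^{1+}$), so that $\int_0^\infty \lVert \psi_x(s)\rVert_{L^\infty_x}^2\,ds$ is controlled by Cauchy--Schwarz in $s$ against $\int_0^\infty \lVert \partial_x^2 \psi_x(s)\rVert_{L^2_x}^2\,ds$ after absorbing the uniform $L^2_x$ bound; the latter integral is finite by a second round of the parabolic energy estimate (differentiate once more, again closing via $E_0$ and the slow-growth/decay already established). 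The short-time ($s$ near $0$) contribution is handled by the local smoothing $\lVert \psi_x(s)\rVert_{L^\infty_x} \lesssim s^{-1/2}\lVert\psi_x(0)\rVert\cdots$ which is $L^2$ near $s=0$ only after one extra derivative's worth of smoothing is used—so in practice one splits $\int_0^1 + \int_1^\infty$ and treats the small-$s$ piece with heat smoothing and the large-$s$ piece with the decay estimates.

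For \eqref{A_x Bound}, recall from \eqref{Connection Coefficient} and the gauge condition that $A_x(s)$ can be recovered by integrating from $s = \infty$ inward along the heat direction: since $A_s \equiv 0$, one has $\partial_s A_m = \partial_m A_s - q_{sm} = -\Im(\psi_s \overline{\psi_m})$ (using the curvature relation with one index equal to $s$), and $A_m(\infty) = 0$ because the flow converges to the constant map $Q$ with the frame converging to $(v_\infty, w_\infty)$. Hence $A_m(s) = \int_s^\infty \Im(\psi_s(s') \overline{\psi_m(s')})\, ds'$, and Minkowski's integral inequality gives $\lVert A_m(s)\rVert_{L^2_x} \le \int_0^\infty \lVert \psi_s(s')\rVert_{L^\infty_x}\lVert \psi_m(s')\rVert_{L^2_x}\, ds' \lesssim \sup_{s'}\lVert\psi_{s'}\rVert_{L^2_x}\cdot\big(\int_0^\infty \lVert\psi_{s'}\rVert_{L^\infty_x}^2\big)^{1/2}\cdots$ — in fact one pairs the uniform $L^2_x$ bound on one factor with \eqref{ek4} on the other, after noting $\psi_s = D_\ell\psi_\ell$ obeys the same type of bounds as $\psi_x$ (this needs its own, parallel, parabolic estimate, but it is of the same nature). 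This is $\lesssim_{E_0} 1$ uniformly in $s$.

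The main obstacle is the second-derivative parabolic estimate feeding \eqref{ek4}: one must show $\int_0^\infty \lVert \partial_x^2\psi_x(s)\rVert_{L^2_x}^2\,ds \lesssim_{E_0} 1$, and the nonlinear (curvature) terms $q\cdot\psi$ in the covariant heat equation are quadratic in $\psi$ and a priori only $L^2$-in-$x$, so naively they are not perturbative. The resolution is that the heat flow instantaneously smooths, so for $s \ge s_0 > 0$ one has strong pointwise decay \eqref{Soft Decay Bounds} and the nonlinearity becomes lower order, while for small $s$ one exploits the $\int_0^\infty \lVert\partial_x\psi_x\rVert_{L^2}^2\,ds < \infty$ gain from the first energy identity together with the $2$D endpoint Gagliardo--Nirenberg inequality to close a Gronwall-type bound. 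This interplay — using the energy identity to buy one derivative in an $L^2_s$ sense, then interpolating — is exactly the mechanism in \cite{Sm09}, and I would simply cite it rather than reproduce the iteration.
\end{proof}
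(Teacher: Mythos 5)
Your proposal correctly identifies that the paper's own ``proof'' of this theorem is a bare citation to \cite[\S 4]{Sm09} and \cite[\S\S 7, 7.1]{Sm09}, and you flag your outline as a heuristic reconstruction rather than a self-contained argument. That part is fine. However, the reconstruction you offer has two concrete gaps, both stemming from the same issue: the integrals in $s$ that your Cauchy--Schwarz steps produce are not controlled by the energy $E_0$ alone.

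For \eqref{ek4}, running Agmon/Gagliardo--Nirenberg $\|\psi_x(s)\|_{L^\infty_x}^2 \lesssim \|\psi_x(s)\|_{L^2_x}\|\partial_x^2\psi_x(s)\|_{L^2_x}$ and then Cauchy--Schwarz in $s$ requires both $\int_0^\infty\|\psi_x(s)\|_{L^2_x}^2\,ds$ and $\int_0^\infty\|\partial_x^2\psi_x(s)\|_{L^2_x}^2\,ds$ to be finite with $E_0$-only constants. Neither holds: the first integral is essentially $\int_0^\infty E(\phi(s))\,ds$, which scales like the \emph{mass} $M(\phi_0)=\int|\phi_0-Q|^2$ rather than the energy (even for the free heat flow $\int_0^\infty\|\partial_x e^{s\Delta}u\|_{L^2}^2\,ds = \tfrac12\|u\|_{L^2}^2$), and the second diverges at $s\to 0^+$ for data with merely $\psi_x(0)\in L^2_x$, since $\|\partial_x^2 e^{s\Delta}f\|_{L^2}^2\sim s^{-2}\|f\|_{L^2}^2$. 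The mechanism that actually works in \cite{Sm09} is the frequency-localized heat decay estimate of the type in \eqref{Hard Field Bounds}: from $\|P_k\psi_x(s)\|_{L^2_x}\lesssim 2^k(1+s2^{2k})^{-N}\gamma_k$ and Bernstein one gets $\|\psi_x(s)\|_{L^\infty_x}\lesssim\sum_k 2^k(1+s2^{2k})^{-N}\gamma_k$, and after squaring, the weight $2^{2k}(1+s2^{2k})^{-2N}$ localizes the sum to $s\sim 2^{-2k}$, so that integrating in $s$ produces $\sum_k\gamma_k^2\sim E_0$ with no reference to second derivatives at all.

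For \eqref{A_x Bound}, the Minkowski-plus-H\"older step $\|A_m(s)\|_{L^2_x}\le\int_s^\infty\|\psi_s(s')\|_{L^\infty_x}\|\psi_m(s')\|_{L^2_x}\,ds'$ followed by Cauchy--Schwarz in $s'$ leaves you with the factor $\bigl(\int_0^\infty\|\psi_m(s')\|_{L^2_x}^2\,ds'\bigr)^{1/2}$, which is the same non-$E_0$-controlled quantity as above (the ``$\cdots$'' in your display hides precisely the divergent $\bigl(\int_0^\infty 1\,ds'\bigr)^{1/2}$ needed to close the way you wrote it). The representation \eqref{CC Integral Rep} should instead be paired with an $L^4_x$--$L^4_x$ H\"older split and the heat-time decay of $\|\psi_x(s')\|_{L^4_x}$ and of $\|\psi_s(s')\|_{L^4_x}=\|D_\ell\psi_\ell(s')\|_{L^4_x}$ (the latter decays faster by one factor because of the extra covariant derivative), whose product is integrable in $s'$ with $E_0$-only constants; equivalently one works frequency-by-frequency as for \eqref{ek4}. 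So while citing \cite{Sm09} is appropriate and is exactly what the paper does, your sketch as written would not reproduce the cited bounds.
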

\begin{proof}
For (\ref{ek4}), see \cite[\S 4]{Sm09}.
For (\ref{A_x Bound}), see \cite[\S \S7, 7.1]{Sm09}.
\end{proof}

From now on we assume that our Schr\"odinger map $\phi = \phi(x, t)$ has
been extended via harmonic map heat flow to a map $\tilde{\phi}(s, x, t), s \geq 0$.
From here on out we also call the extension $\phi$.
Let $\partial_0 = \partial_s$ and define for all
$(s, x, t) \in [0, \infty) \times \RR \times (-T, T)$ the various gauge components
\begin{align*}
\psi_\alpha &:= v \cdot \partial_\alpha \phi + i w \cdot \partial_\alpha \phi \\
A_\alpha &:= w \cdot \partial_\alpha v \\
D_\alpha &:= \partial_\alpha + A_\alpha \\
q_{\alpha \beta} &:= \partial_\alpha A_\beta - \partial_\beta A_\alpha
\end{align*}
The parallel transport condition $w \cdot \partial_s v \equiv 0$ is
equivalently expressed in terms of the connection coefficients as
\begin{equation}
A_s \equiv 0
\label{A_s}
\end{equation}
Expressed in terms of the gauge, the heat flow (\ref{Covariant Heat})
lifts to
\begin{equation}
\psi_s = D_\ell \psi_\ell
\label{Psi_s Equation}
\end{equation}
Using (\ref{Torsion Free}) and (\ref{Curvature}), we may take
the $D_m$ covariant derivative of (\ref{Psi_s Equation}) and rewrite it as
\begin{equation*}
\partial_s \psi_m = D_\ell D_\ell \psi_m + i \Im(\psi_m \overline{\psi_\ell}) \psi_\ell
\end{equation*}
or equivalently
\begin{equation}
(\partial_s - \Delta) \psi_m =
i A_\ell \partial_\ell \psi_m +
i \partial_\ell (A_\ell \psi_m) -
A_x^2 \psi_m + i \psi_\ell \Im(\overline{\psi_\ell} \psi_m)
\label{HF}
\end{equation}
More generally, taking the $D_\alpha$ covariant derivative, we obtain
\begin{equation}
(\partial_s - \Delta) \psi_\alpha =
U_\alpha
\label{genPsiEQ}
\end{equation}
with
\begin{equation}
U_\alpha 
:=
i A_\ell \partial_\ell \psi_\alpha + i \partial_\ell(A_\ell \psi_\alpha) 
- A_x^2 \psi_\alpha + i \psi_\ell \Im(\overline{\psi_\ell} \psi_\alpha) 
\label{Heat Nonlinearity 0}
\end{equation}
From (\ref{Curvature}) and (\ref{A_s}) it follows that
\begin{equation*}
\partial_s A_\alpha = \Im(\psi_s \overline{\psi_\alpha})
\end{equation*}
and integrating back from $s = \infty$ (justified using (\ref{Soft Decay Bounds})) yields
\begin{equation}
A_\alpha(s) = - \int_s^\infty \Im(\overline{\psi_\alpha} \psi_s)(s^\prime) \ds^\prime
\label{CC Integral Rep}
\end{equation}

At $s = 0$, $\phi$ satisfies both (\ref{SM}) and (\ref{Covariant Heat}),
or equivalently, $\psi_t(s = 0) = i \psi_s(s = 0)$. While for $s > 0$
it continues to be the case that $\psi_s = D_\ell \psi_\ell$ by construction,
we no longer necessarily have $\psi_t(s) = i D_\ell(s) \psi_\ell(s)$, i.e.,
$\phi(s, x, t)$ is not necessarily a Schr\"odinger map at fixed $s > 0$. 
We recall from \cite[\S2]{Sm10} the following description of their commutator
$\Psi = \psi_t - i \psi_s$.
\begin{lem}[Flows do not commute]
Set $\Psi := \psi_t - i \psi_s$. Then
\[
\begin{split}
\partial_s \Psi
&= D_\ell D_\ell \Psi + i \Im(\psi_t \bar{\psi}_\ell) \psi_\ell - \Im(\psi_s \bar{\psi}_\ell) \psi_\ell 
\\
&= D_\ell D_\ell \Psi + i \Im(\Psi \bar{\psi}_\ell) \psi_\ell + i \Im(i\psi_s \bar{\psi}_\ell) \psi_\ell
- \Im(\psi_s \bar{\psi}_\ell) \psi_\ell
\end{split}
\]
\label{L:Commutator}
\end{lem}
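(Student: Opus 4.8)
The plan is to derive the evolution equation for $\Psi = \psi_t - i\psi_s$ directly from the known heat-flow evolution equations for $\psi_t$ and $\psi_s$, and then rewrite the resulting zeroth-order terms in terms of $\Psi$ itself. First I would recall that $\psi_\alpha$ satisfies the covariant heat equation \eqref{genPsiEQ}, i.e. $(\partial_s - \Delta)\psi_\alpha = U_\alpha$ with $U_\alpha$ as in \eqref{Heat Nonlinearity 0}, which in covariant form reads $\partial_s \psi_\alpha = D_\ell D_\ell \psi_\alpha + i\,\Im(\psi_\alpha\overline{\psi_\ell})\psi_\ell$. Specializing $\alpha = t$ gives $\partial_s \psi_t = D_\ell D_\ell \psi_t + i\,\Im(\psi_t\overline{\psi_\ell})\psi_\ell$, while $\psi_s = D_\ell\psi_\ell$ by construction and one differentiates this relation: taking $\partial_s$ of $\psi_s = D_\ell\psi_\ell$, commuting $\partial_s$ past $D_\ell$ using $A_s \equiv 0$ together with $\partial_s A_\ell = \Im(\psi_s\overline{\psi_\ell})$ from \eqref{CC Integral Rep}, and using the torsion-free relation \eqref{Torsion Free} and the curvature identity \eqref{Curvature}, yields $\partial_s\psi_s = D_\ell D_\ell\psi_s + i\,\Im(\psi_s\overline{\psi_\ell})\psi_\ell$ — the same structural equation, now with $\alpha = s$. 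This step is essentially the same computation that produced \eqref{HF}/\eqref{genPsiEQ}, just reapplied to the $s$-component.

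Next I would subtract $i$ times the $\psi_s$ equation from the $\psi_t$ equation. Since $D_\ell D_\ell$ is $\C$-linear, $D_\ell D_\ell(\psi_t - i\psi_s) = D_\ell D_\ell \Psi$, so
\[
\partial_s \Psi = D_\ell D_\ell \Psi + i\,\Im(\psi_t\overline{\psi_\ell})\psi_\ell - i\cdot i\,\Im(\psi_s\overline{\psi_\ell})\psi_\ell = D_\ell D_\ell\Psi + i\,\Im(\psi_t\overline{\psi_\ell})\psi_\ell - \Im(\psi_s\overline{\psi_\ell})\psi_\ell,
\]
which is exactly the first displayed line of the lemma. (One must be slightly careful here: the factor $i$ multiplies the entire equation $\partial_s\psi_s = \cdots$, and the potential term $i\,\Im(\psi_s\overline{\psi_\ell})\psi_\ell$ becomes $i\cdot i\,\Im(\psi_s\overline{\psi_\ell})\psi_\ell = -\Im(\psi_s\overline{\psi_\ell})\psi_\ell$; the sign bookkeeping is the only place to stumble.)

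For the second displayed line I would simply substitute $\psi_t = \Psi + i\psi_s$ into the term $i\,\Im(\psi_t\overline{\psi_\ell})\psi_\ell$, using $\R$-linearity of $\Im$: $i\,\Im(\psi_t\overline{\psi_\ell})\psi_\ell = i\,\Im(\Psi\overline{\psi_\ell})\psi_\ell + i\,\Im(i\psi_s\overline{\psi_\ell})\psi_\ell$. Adding back the unchanged $-\Im(\psi_s\overline{\psi_\ell})\psi_\ell$ term gives precisely
\[
\partial_s\Psi = D_\ell D_\ell\Psi + i\,\Im(\Psi\overline{\psi_\ell})\psi_\ell + i\,\Im(i\psi_s\overline{\psi_\ell})\psi_\ell - \Im(\psi_s\overline{\psi_\ell})\psi_\ell,
\]
as claimed. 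No further simplification of the last two terms is asserted by the statement, so the proof stops here.

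The only genuine obstacle is the first step — establishing that the $s$-component $\psi_s$ obeys the same covariant heat equation $\partial_s\psi_s = D_\ell D_\ell\psi_s + i\,\Im(\psi_s\overline{\psi_\ell})\psi_\ell$ as the spatial and temporal components. This requires commuting $\partial_s$ with the spatial covariant derivatives, which is where the curvature term $[D_s, D_\ell] = iq_{s\ell} = i\,\Im(\psi_s\overline{\psi_\ell})$ enters and generates precisely the zeroth-order potential; the compatibility relation \eqref{Torsion Free} in the mixed $(s,\ell)$ indices, i.e. $D_s\psi_\ell = D_\ell\psi_s$, is also used. Everything after that is linear algebra over $\C$ and careful tracking of the factor of $i$. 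I would reference \cite[\S2]{Sm10} for the computation if a fuller justification is wanted, since the lemma is attributed there.
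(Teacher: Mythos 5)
Your strategy is the right one and essentially matches the route the paper's source reference would take: apply the covariant heat equation $\partial_s\psi_\alpha = D_\ell D_\ell\psi_\alpha + i\,\Im(\psi_\alpha\overline{\psi_\ell})\psi_\ell$ (the covariant form of \eqref{genPsiEQ}) to $\alpha=t$ and $\alpha=s$, subtract $i$ times the latter from the former, and then substitute $\psi_t = \Psi + i\psi_s$ to obtain the second line.

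However, there is a sign slip at exactly the place you flagged as "the only place to stumble." In your display you write
\[
-i\cdot i\,\Im(\psi_s\overline{\psi_\ell})\psi_\ell = -\Im(\psi_s\overline{\psi_\ell})\psi_\ell,
\]
but $-i\cdot i = -i^2 = +1$, so this term is $+\Im(\psi_s\overline{\psi_\ell})\psi_\ell$, not $-\Im(\psi_s\overline{\psi_\ell})\psi_\ell$. Equivalently: your parenthetical remark correctly computes that multiplying the $\psi_s$ equation by $i$ turns the potential term into $-\Im(\psi_s\overline{\psi_\ell})\psi_\ell$, but you then need to \emph{subtract} that, giving $+\Im(\psi_s\overline{\psi_\ell})\psi_\ell$. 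The correct conclusion is therefore
\[
\partial_s\Psi = D_\ell D_\ell\Psi + i\,\Im(\psi_t\overline{\psi_\ell})\psi_\ell + \Im(\psi_s\overline{\psi_\ell})\psi_\ell,
\]
with a plus sign on the last term (and correspondingly in the second line after substituting $\psi_t=\Psi+i\psi_s$). You can cross-check this against the identity $(\partial_s-\Delta)\Psi = U_t - iU_s$, which the paper invokes in the proof of Corollary~\ref{C:Comm L4}: expanding $U_t - iU_s$ using \eqref{Heat Nonlinearity 0} produces the cubic term $i\psi_\ell\Im(\overline{\psi_\ell}\psi_t) + \psi_\ell\Im(\overline{\psi_\ell}\psi_s)$, again with a $+$. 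Note this means the lemma as printed appears to carry the same sign typo in its last term; your correct line of reasoning should yield $+$, and the fact that your erroneous $-i\cdot i = -1$ step happened to reproduce the printed statement is a coincidence rather than a confirmation.

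One smaller point worth spelling out in a written proof: applying \eqref{genPsiEQ} with $\alpha = s$ uses $D_s\psi_s = \partial_s\psi_s$ via $A_s\equiv 0$ and $D_s D_\ell\psi_\ell = D_\ell D_\ell\psi_s + iq_{s\ell}\psi_\ell$ via \eqref{Torsion Free} and \eqref{Curvature}; you gesture at this but it is the only step that goes beyond direct substitution and deserves a line of its own.
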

We now record some frequency-localized energy estimates
which find application in controlling the paralinearized nonlinearity.
\begin{thm}[Frequency-localized energy estimates for heat flow]
Let $f \in \{\phi, v, w\}$.  Then for $\sigma \in [1, \sigma_1]$ the bound
\begin{equation}
\lVert P_k f(s) \rVert_{L_t^\infty L_x^2}
\lesssim
2^{-\sigma k} \gamma_k(\sigma) (1 + s 2^{2k})^{-20}
\label{Hard Envelope Bounds}
\end{equation}
holds,
and, for any $\sigma, \rho \in \mathbb{Z}_+$, it holds that
\begin{equation}
\sup_{k \in \mathbb{Z}} \sup_{s \in [0, \infty)}
(1 + s)^{\sigma / 2} 2^{\sigma k} \lVert P_k \partial_t^\rho f(s) \rVert_{L_t^\infty L_x^2} < \infty
\label{Soft Envelope Bounds}
\end{equation}
\label{Energy Envelope Decay}
\end{thm}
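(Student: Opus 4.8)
The plan is to deduce both estimates from frequency-localized parabolic energy inequalities for the harmonic map heat flow, combined with a Duhamel expansion in the heat parameter $s$ and a continuity argument; the qualitative bounds \eqref{Soft Envelope Bounds} are established first and then used, together with \eqref{ek4} and \eqref{A_x Bound}, as a priori input for the bootstrap that produces the sharp bounds \eqref{Hard Envelope Bounds}. The starting observation is that each of the three maps solves an inhomogeneous heat-type equation whose forcing is quadratic or higher in the small quantities $\psi_x$ and $A_x$. For $\phi - Q$ this is \eqref{Covariant Heat}, which (using $|\partial_x\phi|^2 = |\psi_x|^2$) reads $(\partial_s - \Delta)(\phi - Q) = \phi\,|\psi_x|^2$. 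For $v$ and $w$ there is no Laplacian, but differentiating the orthonormality relations among $\{v,w,\phi\}$ and using the parallel transport condition $A_s \equiv 0$ gives $\partial_s v = -\Re(\psi_s)\,\phi$ and $\partial_s w = -\Im(\psi_s)\,\phi$ with $\psi_s = D_\ell\psi_\ell$; integrating from $s = \infty$ (justified by \eqref{Soft Decay Bounds}) then expresses $v - v_\infty$ and $w - w_\infty$ as $s$-integrals of $\psi_s\,\phi$, i.e.\ of one derivative of $\psi_x$ together with the quadratic term $A_\ell\psi_\ell$, the connection being controlled through \eqref{CC Integral Rep}. It therefore suffices to prove, for $N$ as large as we please,
\[
\lVert P_k\psi_x(s)\rVert_{L_t^\infty L_x^2} \lesssim 2^{(1-\sigma)k}\,\gamma_k(\sigma)\,(1 + s2^{2k})^{-N}
\]
for $\sigma \in [1,\sigma_1]$, together with its soft counterpart, and then recover the three maps from the Duhamel and integration formulas above, the extra factor $2^{-k}$ relating $\psi_x$ to the map accounting for the shift from $2^{(1-\sigma)k}$ to $2^{-\sigma k}$ in \eqref{Hard Envelope Bounds}.

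For the soft bounds \eqref{Soft Envelope Bounds}, which are essentially contained in \cite{Sm09}, I would induct on $(\sigma,\rho)$: applying $\partial_t^\rho$ to \eqref{genPsiEQ} yields a heat equation for $\partial_t^\rho\psi_\alpha$ whose forcing involves only already-controlled quantities, and a crude parabolic energy estimate together with Bernstein's inequality, the pointwise decay \eqref{Soft Decay Bounds}, and the fixed-$s$ finiteness of the $H^{\sigma,\rho}$ norms closes the induction. These bounds are far from sharp at the level of envelopes but provide the qualitative finiteness and $(1+s)^{-\sigma/2}$-type decay that the continuity argument needs.

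For the hard bounds, fix $N \gg 1$ and run a continuity argument in $s$ on an arbitrary interval $[0,S]$: assume the displayed $\psi_x$ bound on $[0,S]$ with a large constant $C_0$, and improve it. Projecting \eqref{HF} to frequency $2^k$, taking the real $L_x^2$ inner product with $P_k\psi_m$, using $\lVert\partial_x P_k g\rVert_{L_x^2}^2 \gtrsim 2^{2k}\lVert P_k g\rVert_{L_x^2}^2$ and Gr\"onwall's inequality, and then taking $\sup_t$ yields
\[
\lVert P_k\psi_m(s)\rVert_{L_t^\infty L_x^2} \lesssim e^{-cs2^{2k}}\,\lVert P_k\psi_m(0)\rVert_{L_t^\infty L_x^2} + \int_0^s e^{-c(s-s')2^{2k}}\,\lVert P_k U_m(s')\rVert_{L_t^\infty L_x^2}\,ds'.
\]
The first term is $\lesssim 2^{(1-\sigma)k}\gamma_k(\sigma)$ by the definition \eqref{gamma Envelope} of the envelope (up to harmless paraproduct terms relating $\psi_m(0)$ to $\partial_x\phi(0)$, $v(0)$, $w(0)$). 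The forcing $U_m$ in \eqref{HF} is a sum of terms of schematic type $A_x\partial_x\psi_x$, $\partial_x(A_x\psi_x)$, $A_x^2\psi_x$ and $\psi_x^3$; I would dyadically decompose each and estimate the frequency interactions using the bootstrap hypothesis for the top-frequency factor, the soft bounds, \eqref{A_x Bound} and \eqref{CC Integral Rep} for the factors of $A_x$, the $s$-integrated $L_x^\infty$ bound \eqref{ek4} for the remaining low-order factors of $\psi_x$ (via Cauchy--Schwarz in $s'$), and the summation rules \eqref{Sum 1}--\eqref{Sum 2}. Since each such term is essentially quadratic (or higher) in the bootstrapped quantity while the $s$-integration against $e^{-c(s-s')2^{2k}}$ recovers the full decay $(1+s2^{2k})^{-(N-O(1))}$, this reproduces the right-hand side $2^{(1-\sigma)k}\gamma_k(\sigma)(1+s2^{2k})^{-(N-O(1))}$ with a constant that can be made $\le C_0/2$, once $C_0$ is taken large and the genuinely quadratic-in-energy pieces are absorbed using the smallness available in the subthreshold (and, where needed, energy-dispersed) setting. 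This closes the continuity argument for $\psi_x$; feeding it into the Duhamel formula for $\phi - Q$ and the integration formulas for $v - v_\infty$, $w - w_\infty$ gives \eqref{Hard Envelope Bounds} with the stated exponent $20$ (choosing $N$ large enough at the outset to absorb the $O(1)$ losses).

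The main obstacle is the estimation of the forcing in the high$\times$high$\to$low frequency regime, where the heat propagator $e^{-c(s-s')2^{2k}}$ at the low output frequency $2^k$ supplies essentially no smoothing, so the needed powers of $2^{-k}$ and the decay $(1+s2^{2k})^{-20}$ have to be extracted instead from the rapid decay of the high-frequency inputs and from the $s'$-integration; this must be balanced against the loss incurred because $A_x$ is itself nonlocal in $s$ — it is the integral over $[s',\infty)$ of $\Im(\overline{\psi_x}\psi_s)$ — which couples all dyadic scales and must be organized so as not to introduce a circular dependence in the bootstrap. Controlling the decay exponent at exactly $20$ is then merely bookkeeping, achieved by starting the continuity argument with $N$ large enough that the $O(1)$ losses at each paraproduct and $s$-integration are absorbed.
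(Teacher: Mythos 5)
The paper itself defers the proof to \cite{Sm10} (which in turn extends \cite[\S 5]{BeIoKeTa11}), and the logical order there is the \emph{reverse} of yours: one first bounds $P_k(\phi(s)-Q)$ directly from the extrinsic heat equation $(\partial_s-\Delta)(\phi-Q)=\phi\,\lvert\partial_x\phi\rvert^2$ (whose nonlinearity involves only $\phi$ and so closes as a bootstrap on $\phi$ alone, seeded by $\gamma_k$), next propagates those bounds to $v-v_\infty,\ w-w_\infty$ through the transport equations $\partial_s v=-(v\cdot\partial_s\phi)\,\phi$, $\partial_s w=-(w\cdot\partial_s\phi)\,\phi$ coupled to $\phi$, and only \emph{then} deduces the gauge-field bounds of Corollary \ref{EED Cor} on $\psi_x$, $A_x$ by paraproduct from Theorem \ref{Energy Envelope Decay}. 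You propose to prove the $\psi_x$ bound first and recover $\phi,v,w$ afterwards.

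The gap is at the very start of your continuity argument, where you claim the initial-data term $\lVert P_k\psi_m(0)\rVert_{L^\infty_t L^2_x}$ is $\lesssim 2^{(1-\sigma)k}\gamma_k(\sigma)$ ``by the definition of the envelope (up to harmless paraproduct terms).'' The envelope $\gamma_k$ defined in \eqref{gamma Envelope} controls $\lVert P_{k'}\phi\rVert_{L^\infty_t L^2_x}$, not $\lVert P_{k'}\psi_x\rVert$. Writing $\psi_x=v\cdot\partial_x\phi+iw\cdot\partial_x\phi$ and decomposing, the high--low piece where $v$ (or $w$) carries the high frequency gives terms like $P_k\bigl(P_{[k-5,k+5]}v(0)\cdot P_{<k-10}\partial_x\phi(0)\bigr)$, whose $L^2$ bound requires $\lVert P_j v(0)\rVert_{L^2}\lesssim 2^{-\sigma j}\gamma_j(\sigma)$ — precisely the statement \eqref{Hard Envelope Bounds} for $f=v$ at $s=0$. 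The high--high piece is worse still. So these paraproduct terms are not harmless; they are the theorem itself, for $v$ and $w$, and your bootstrap on $\psi_x$ alone has no way to initialize. The same circularity recurs in the recovery step for $\phi$: the Duhamel term $\int_0^s e^{(s-s')\Delta}P_k(\phi\,\lvert\psi_x\rvert^2)\,ds'$ has a low--low--high interaction in which $\phi$ carries the high frequency, again calling for bounds on $P_j\phi(s')$ that you have not yet established. A repair would be to bootstrap $P_k\phi(s)$, $P_k v(s)$, $P_k w(s)$ jointly (with $\gamma_k$ seeding only the $\phi$ component) rather than $\psi_x$ alone — which is exactly the extrinsic order in \cite{Sm10, BeIoKeTa11}. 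Your formulas $\partial_s v=-\Re(\psi_s)\phi$, $\partial_s w=-\Im(\psi_s)\phi$ and the $(1+s2^{2k})^{-N}$ bookkeeping are otherwise sound.
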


\begin{cor}[Frequency-localized energy estimates for the caloric gauge]
For $\sigma \in [0, \sigma_1 - 1]$ it holds that
\begin{equation}
\lVert P_k \psi_x(s) \rVert_{L_t^\infty L_x^2} +
\lVert P_k A_m(s) \rVert_{L_t^\infty L_x^2}
\lesssim
2^k 2^{-\sigma k} \gamma_k(\sigma) (1 + s 2^{2k})^{-20}
\label{Hard Field Bounds}
\end{equation}
Moreover, for any $\sigma \in \mathbb{Z}_+$,
\begin{equation}
\sup_{k \in \mathbb{Z}} \sup_{s \in [0, \infty)} (1 + s)^{\sigma / 2}
2^{\sigma k} 2^{-k} \left(
\lVert P_k (\partial_t^\rho \psi_x(s)) \rVert_{L_t^\infty L_x^2} +
\lVert P_k (\partial_t^\rho A_x(s)) \rVert_{L_t^\infty L_x^2} \right)
< \infty
\label{Soft Field Space Bounds}
\end{equation}
and
\begin{equation}
\sup_{k \in \mathbb{Z}} \sup_{s \in [0, \infty)} (1 + s)^{\sigma / 2}
2^{\sigma k} \left(
\lVert P_k (\partial_t^\rho \psi_t(s)) \rVert_{L_t^\infty L_x^2} +
\lVert P_k (\partial_t^\rho A_t(s)) \rVert_{L_t^\infty L_x^2} \right)
< \infty
\label{Soft Field Time Bounds}
\end{equation}
\label{EED Cor}
\end{cor}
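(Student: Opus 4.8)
The plan is to obtain all of these bounds directly from the algebraic identities defining the gauge, namely $\psi_\alpha = v\cdot\partial_\alpha\phi + i\, w\cdot\partial_\alpha\phi$ and $A_\alpha = w\cdot\partial_\alpha v$, combined with the heat-flow frequency bounds of Theorem~\ref{Energy Envelope Decay} for $f\in\{\phi,v,w\}$. Writing $v = v_\infty + \tilde v$, $w = w_\infty + \tilde w$, and $\phi = Q + (\phi - Q)$, each gauge component becomes a finite sum of (i) a \emph{linear} term, in which one derivative falls on $\phi - Q$, $\tilde v$, or $\tilde w$ and the remaining factor is a constant vector $v_\infty$ or $w_\infty$, and (ii) genuinely \emph{bilinear} terms of the schematic form $(\text{remainder})\cdot\partial(\text{remainder or }\phi-Q)$. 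For the linear terms, since $\|P_k\partial_x g\|_{L^2_x}\lesssim 2^k\|P_k g\|_{L^2_x}$, an application of (\ref{Hard Envelope Bounds}) at regularity index $\sigma$ reproduces the right side of (\ref{Hard Field Bounds}), the factor $2^k$ being exactly the cost of the spatial derivative; repeating the computation with $\partial_t^\rho$ in place of $\partial_x$ and (\ref{Soft Envelope Bounds}) in place of (\ref{Hard Envelope Bounds}) yields (\ref{Soft Field Time Bounds}) for $\psi_t,A_t$ (no factor $2^k$, as $\partial_t$ is not a frequency multiplier), and (\ref{Soft Field Space Bounds}) for $\psi_x,A_x$ once the Leibniz rule distributes $\partial_t^\rho$ across the product.

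For the bilinear terms I would run the standard Littlewood--Paley trichotomy for $P_k$ of a product. In the low--high and high--low regimes the low-frequency factor is placed in $L^\infty_x$ and estimated \emph{uniformly} via the pointwise decay (\ref{Soft Decay Bounds}) (which gives $\|\tilde v\|_{L^\infty_x},\|\tilde w\|_{L^\infty_x}\lesssim 1$ and $\|\partial_x(\phi-Q)\|_{L^\infty_x}\lesssim 1$, hence $\|P_{< k}(\cdot)\|_{L^\infty_x}\lesssim 1$), so the high-frequency factor---sitting at frequency $\sim 2^k$---carries both the required power of $2^k$ and the heat-smoothing weight $(1+s2^{2k})^{-20}$ directly from (\ref{Hard Envelope Bounds}); for the $\partial_t$-differentiated versions one instead uses (\ref{Soft Envelope Bounds}) with a generously large regularity index on the high factor, the extra decay in $\langle s\rangle$ absorbing the mismatch. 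In the high--high regime one sums over the common high frequency $2^j$, $j\gtrsim k$: Bernstein trades the Hölder $L^1_x$ bound for an $L^2_x$ one at the profit of a factor $2^k$, each summand then carrying $2^{-(\sigma'+\sigma'')j}\gamma_j(\sigma')\gamma_j(\sigma'')$ and $(1+s2^{2j})^{-40}$; re-centering the envelopes at $k$ via the slowly-varying property (\ref{Slowly Varying}), the $j$-series converges (the Sobolev regularity $\sigma_1$ and the heat decay providing the needed summability), and one of the two envelope factors is absorbed into an $O(1)$ constant using $\sup_k\gamma_k(1)\lesssim E_0^{1/2}\lesssim 1$ (i.e.\ finiteness of the energy, cf.\ (\ref{ek4})), leaving a bound linear in $\gamma_k(\sigma)$. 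The resulting geometric-type series are summed using (\ref{Sum 1})--(\ref{Sum 2}).

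The step I expect to be the main obstacle is precisely this simultaneous bookkeeping in the bilinear estimates: one must propagate the slowly-varying envelope \emph{and} the sharp heat-smoothing factor $(1+s2^{2k})^{-20}$ at once, and allocate the single derivative together with the $2^k$ gained from Bernstein so that the output matches the prescribed weights $2^k2^{-\sigma k}\gamma_k(\sigma)$ and $(1+s)^{-\sigma/2}2^{-\sigma k}$ \emph{exactly}, for every $\sigma$ in the stated range and uniformly in $s$ and over the time slab. The bookkeeping is made to work by the correspondence ``$\psi_x$ at regularity $\sigma$ $\leftrightarrow$ $\phi,v,w$ at regularity $\sigma+1$'' together with the envelope summation rules; the endpoint $\sigma = 0$, where Theorem~\ref{Energy Envelope Decay} is not stated, is handled more crudely using heat smoothing alone and the trivial $\gamma_k(0)\gtrsim\|P_k\phi\|_{L^\infty_tL^2_x}$. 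Finally, although the corollary can be proved entirely through these product estimates, the bounds on $A_t$ (and sharper bounds on $\psi_t$, should they be needed downstream) may alternatively be read off from the representation (\ref{CC Integral Rep}) and the identity $\psi_s = D_\ell\psi_\ell$, which we have already recorded.
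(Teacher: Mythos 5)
The paper itself does not prove this corollary; it defers to \cite{Sm10} (and ultimately \cite{BeIoKeTa11}) with the single sentence ``Full proofs of Theorem~\ref{Energy Envelope Decay} and its corollary may be found in \cite{Sm10}.'' Your proposal is the natural way to fill that in --- expand $\psi_\alpha = v\cdot\partial_\alpha\phi + i\,w\cdot\partial_\alpha\phi$ and $A_\alpha = w\cdot\partial_\alpha v$, separate out the constant parts $v_\infty, w_\infty, Q$, and run the Littlewood--Paley trichotomy using Theorem~\ref{Energy Envelope Decay} on the high-frequency factor, the pointwise bounds (\ref{Soft Decay Bounds}) on low-frequency factors, and Bernstein plus the slowly-varying property (\ref{Slowly Varying}) for the high--high tail --- and this is indeed the route taken in the cited works. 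So the strategy matches.

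There is, however, a bookkeeping inconsistency in your write-up that would need to be resolved before the sketch closes. In one place you claim that applying (\ref{Hard Envelope Bounds}) \emph{at regularity index $\sigma$} directly ``reproduces the right side of (\ref{Hard Field Bounds})''; since $\|P_k\partial_x\phi\|\lesssim 2^k\|P_k\phi\|\lesssim 2^k 2^{-\sigma k}\gamma_k(\sigma)(1+s2^{2k})^{-20}$, this does reproduce the target weight, but only for $\sigma\in[1,\sigma_1]$, not the stated $[0,\sigma_1-1]$, so it does not by itself cover either endpoint of the corollary's range. Elsewhere you invoke ``the correspondence $\psi_x$ at regularity $\sigma\leftrightarrow\phi,v,w$ at regularity $\sigma+1$,'' but applying (\ref{Hard Envelope Bounds}) at index $\sigma+1$ yields $2^{-\sigma k}\gamma_k(\sigma+1)(1+s2^{2k})^{-20}$, and $\gamma_k(\sigma+1)\lesssim 2^k\gamma_k(\sigma)$ is \emph{not} automatic from the definition (\ref{gamma Envelope}), since the supremum defining $\gamma_k(\sigma+1)$ may be achieved at $k'>k$. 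So the two bookkeeping conventions you use are not the same, and only the first one lands on the stated envelope $\gamma_k(\sigma)$; the index range should then be tracked consistently in the same convention throughout the low--high, high--low, and high--high cases (in the high--high case, where you must split the regularity budget between the two factors and sum in $j>k$, this is where the restriction to $\sigma\leq\sigma_1-1$ actually originates, and where you should allocate one unit of regularity to one factor and use $\sup_j\gamma_j(1)\lesssim E_0^{1/2}$ --- which, incidentally, follows from the definition of the energy and (\ref{gamma Envelope}), not from (\ref{ek4}) as you cite). For the $\sigma=0$ endpoint, asserting $\gamma_k(0)\gtrsim\|P_k\phi\|_{L^\infty_tL^2_x}$ gives the bound at $s=0$ only; the factor $(1+s2^{2k})^{-20}$ still needs to be propagated, and you do not say how. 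These are not fatal --- the strategy is the right one and the gaps are fillable --- but they are precisely the ``simultaneous bookkeeping'' you flag as the main obstacle, and the sketch as written does not yet resolve it.

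Your treatment of (\ref{Soft Field Space Bounds}) and (\ref{Soft Field Time Bounds}) via (\ref{Soft Envelope Bounds}) with the Leibniz rule in $\partial_t^\rho$ is straightforward and correct, since those bounds are soft (no uniform envelope to track) and the relevant factors of $2^k$ and $(1+s)^{-\sigma/2}$ come directly from the spatial Littlewood--Paley localization and (\ref{Soft Envelope Bounds}).
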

Full proofs of Theorem \ref{Energy Envelope Decay} and its corollary may be found in
\cite{Sm10}. These are extensions to the energy-dispersed setting
of analogous small-energy bounds from \cite{BeIoKeTa11}.

\section{Function spaces} \label{S:Function spaces}

\subsection{Definitions}

\begin{defin}[Littlewood-Paley multipliers]
Let $\eta_0 : \mathbb{R} \to [0, 1]$ be a smooth even function vanishing outside the interval
$[-8/5, 8/5]$ and equal to $1$ on $[-5/4, 5/4]$.  For $j \in \mathbb{Z}$, set
\begin{equation*}
\chi_j(\cdot) = \eta_0( \cdot / 2^j ) - \eta_0( \cdot / 2^{j - 1}),
\quad
\chi_{\leq j}(\cdot) = \eta_0(\cdot / 2^j)
\end{equation*}
Let $P_k$ denote the operator on $L^\infty(\RR)$ defined by the Fourier multiplier
$\xi \to \chi_k(\lvert \xi \rvert)$.  For any interval $I \subset \mathbb{R}$, let
$\chi_I$  be the Fourier multiplier defined by $\chi_I = \sum_{j \in I \cap \mathbb{Z}} \chi_j$ and 
let $P_I$ denote its corresponding operator on $L^\infty(\RR)$.  We denote
$P_{(-\infty, k]}$ by $P_{\leq k}$ for short.  For $\theta \in \mathbb{S}^1$ and $k \in \mathbb{Z}$, we define
the operators $P_{k, \theta}$ by the Fourier multipliers $\xi \to \chi_k(\xi \cdot \theta)$.
\label{D:LP}
\end{defin}

Some frequency interactions in the nonlinearity of (\ref{NLS}) can be controlled
using the following Strichartz estimate:
\begin{lem}[Strichartz estimate]
Let $f \in L_x^2(\RR)$ and $k \in \mathbb{Z}$.  Then the Strichartz estimate
\begin{equation*}
\lVert e^{i t \Delta} f \rVert_{L_{t,x}^4} \lesssim \lVert f \rVert_{L_x^2}
\end{equation*}
holds, as does
the maximal function bound
\begin{equation*}
\lVert e^{i t \Delta} P_k f \rVert_{L_x^4 L_t^\infty} \lesssim
2^{k/2} \lVert f \rVert_{L_x^2}
\end{equation*}
\end{lem}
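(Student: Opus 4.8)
The plan is to prove the two classical fixed-time and mixed-norm estimates for the free Schr\"odinger evolution, both of which are standard; the point of recording them here is to fix the constants and normalizations used later in the definition of the spaces $G_k(T)$ and $N_k(T)$.

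For the space-time bound $\lVert e^{it\Delta} f \rVert_{L^4_{t,x}} \lesssim \lVert f \rVert_{L^2_x}$, I would invoke the $TT^*$ method. Writing $U(t) = e^{it\Delta}$, the dual formulation reduces the claim to the bilinear estimate
\begin{equation*}
\left\lvert \int\!\!\int U(t) U(s)^* F(s)\, \overline{G(t)}\, ds\, dt \right\rvert
\lesssim \lVert F \rVert_{L^{4/3}_{t,x}} \lVert G \rVert_{L^{4/3}_{t,x}},
\end{equation*}
which in turn follows once one knows the dispersive decay $\lVert U(t-s) h \rVert_{L^\infty_x} \lesssim |t-s|^{-1} \lVert h \rVert_{L^1_x}$ together with the trivial $L^2 \to L^2$ bound, by interpolation and the Hardy--Littlewood--Sobolev inequality in the time variable (the relevant exponent in $d = 2$ puts the endpoint Strichartz pair at $(q,r) = (4,4)$, and $2/q = d(1/2 - 1/r)$ holds with room to spare, so HLS applies in the non-endpoint regime). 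An equally good route, and perhaps the cleanest to write, is to note that $|U(t)f(x)|^2$ has an explicit expression whose space-time $L^2$ norm is computed directly via Plancherel after a change of variables, reducing to the elementary fact that the map $(\xi,\eta)\mapsto(\xi+\eta,|\xi|^2+|\eta|^2)$ has bounded Jacobian away from the diagonal; I would present whichever of these is shorter.

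For the maximal function bound $\lVert e^{it\Delta} P_k f \rVert_{L^4_x L^\infty_t} \lesssim 2^{k/2}\lVert f\rVert_{L^2_x}$, the plan is to reduce by scaling to $k = 0$ and then again use a $TT^*$ argument, now with respect to the operator $f \mapsto e^{it\Delta}P_0 f$ viewed as mapping $L^2_x$ into $L^4_x L^\infty_t$. The dual space is $L^{4/3}_x L^1_t$, so it suffices to bound the kernel of $TT^*$, namely the operator with Schwartz kernel $K(x,y) = \int \widetilde{\chi_0}(\xi)\, e^{i(x-y)\cdot\xi} \big(\sup_t \text{-type contribution}\big)$; more precisely one estimates $\big\lVert \int K_0(x-y, t)\, g(y,t)\, dy\, dt\big\rVert$ where $K_0(z,t) = \int \chi_0(|\xi|)^2 e^{iz\cdot\xi + it|\xi|^2}\, d\xi$ satisfies the pointwise bound $|K_0(z,t)| \lesssim (1+|z|)^{-N}$ uniformly in $t$ (stationary phase / non-stationary phase on the compactly supported, smooth symbol), hence $\sup_t |K_0(z,t)| \in L^1_z \cap L^2_z$ with the relevant scaling, and Young's inequality in $x$ closes the $L^{4/3}_x L^1_t \to L^{4}_x L^\infty_t$ bound. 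Undoing the scaling $\xi \mapsto 2^k\xi$ produces exactly the factor $2^{k/2}$ (from $2^{2k \cdot (d/2)(1/2-1/4)}$ with $d=2$, i.e.\ $2^{k/2}$).

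The genuinely delicate point is the maximal-function estimate rather than the Strichartz estimate: one must be careful that the $L^\infty_t$ norm is taken \emph{after} the spatial $L^4$ norm, so the stationary-phase decay has to be uniform in $t$, and the frequency localization $P_k$ is essential (the unlocalized maximal operator is false at this level). I expect the main obstacle to be organizing the $TT^*$ kernel bound so that the time-supremum is harmless --- concretely, showing $\sup_{t} |K_0(z,t)| \lesssim \langle z\rangle^{-N}$ with constants independent of $t$, which follows from integrating by parts in $\xi$ on the smooth compactly supported symbol $\chi_0(|\xi|)^2$ and noting that the phase gradient $z + 2t\xi$ can be exploited when $|z| \gg |t|$ while the symbol's compact support handles the rest; but this is routine stationary phase and I would not belabor it.
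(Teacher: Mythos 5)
The paper's own ``proof'' is a one-line citation: the first estimate is attributed to Strichartz \cite{St77}, and the maximal-function bound is stated to follow ``from scaling'' (i.e.\ from a known $k=0$ estimate). You choose instead to sketch a self-contained proof, which is reasonable; the Strichartz half of your sketch is correct and standard (either the $TT^*$/HLS route or the Plancherel-after-change-of-variables computation works in $d=2$, $(q,r)=(4,4)$ being non-endpoint).

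The maximal-function half, however, has a genuine error exactly at the point you flagged as delicate. The claimed kernel bound
$\sup_{t}\lvert K_0(z,t)\rvert \lesssim \langle z\rangle^{-N}$
is false. Integration by parts using the phase gradient $z + 2t\xi$ only produces rapid decay when $\lvert z \rvert \gg \lvert t\rvert$, so that $\lvert z + 2t\xi\rvert \gtrsim \lvert z\rvert$ on the annulus $\lvert\xi\rvert\sim 1$. When $\lvert t\rvert \sim \lvert z\rvert$, the stationary point $\xi_* = -z/(2t)$ lies inside the support of $\chi_0(\lvert\xi\rvert)^2$, and stationary phase gives $\lvert K_0(z,t)\rvert \sim \lvert t\rvert^{-1} \sim \lvert z\rvert^{-1}$; the compact support of the symbol does not help here. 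Taking the supremum over $t$ (which you must do, since the $L^\infty_t$ norm is inside the $L^4_x$ norm) one gets only
$\sup_{t}\lvert K_0(z,t)\rvert \lesssim \langle z\rangle^{-1}$,
and $\langle z\rangle^{-1} \notin L^1(\mathbb{R}^2)\cup L^2(\mathbb{R}^2)$, so ordinary Young's inequality does not close the $L^{4/3}_x L^1_t \to L^4_x L^\infty_t$ bound as you assert. The argument can be repaired: $\langle z\rangle^{-1}$ does lie in $L^{2,\infty}(\mathbb{R}^2)$, and the weak Young / Hardy--Littlewood--Sobolev inequality gives $L^{2,\infty} * L^{4/3} \to L^4$ (the exponents $1/2 + 3/4 = 1 + 1/4$ are non-endpoint), which is exactly what is needed. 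So the conclusion stands, but you must replace ``$\sup_t\lvert K_0\rvert \in L^1_z\cap L^2_z$ plus Young'' with ``$\sup_t\lvert K_0\rvert \in L^{2,\infty}_z$ plus HLS''; as written the step fails.

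Your scaling reduction to $k=0$ is correct and does yield the factor $2^{k/2}$, matching the paper's remark that the $k$-dependence comes from scaling.
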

The first bound is the original Strichartz estimate \cite{St77}
and the second follows from scaling.

For a unit length $\theta \in \Sp^1$, we denote by $H_\theta$ its orthogonal
complement in $\R^2$ with the induced Lebesgue measure.
Define the lateral spaces $L_\theta^{p,q}$ as those consisting of all 
measurable $f$ for which the norm
\begin{equation*}
\lVert h \rVert_{L_\theta^{p,q}} :=
\left( \int_{\R} \left(
\int_{H_\theta \times \R} \lvert h(x_1 \theta + x_2, t )
\rvert^q dx_2 dt \right)^{p/q} dx_1 \right)^{1/p}
\end{equation*}
is finite.
We make the usual modifications when $p = \infty$ or $q = \infty$.
For proofs of the following lateral Strichartz estimates, see \cite[\S3, \S7]{BeIoKeTa11}.
\begin{lem}[Lateral Strichartz estimates]
Let $f \in L_x^2(\RR)$, $k \in \mathbb{Z}$, and $\theta \in \mathbb{S}^1$.
Let $ 2 < p \leq \infty, 2 \leq q \leq \infty$ and $1/p + 1/q = 1/2$. Then
\begin{align*}
\lVert e^{i t \Delta} P_{k, \theta} f \rVert_{L_\theta^{p, q}}
&\lesssim_{\phantom{p}}
2^{k(2/p - 1/2)} \lVert f \rVert_{L_x^2}, &p \geq q
\\
\lVert e ^{i t \Delta} P_k f \rVert_{L_\theta^{p,q}}
&\lesssim_p
2^{k(2/p - 1/2)} \lVert f \rVert_{L_x^2}, &p\leq q
\end{align*}
\label{Lateral}
\end{lem}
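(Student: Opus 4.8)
The plan is, for each of the two families, to locate two endpoint exponent pairs at which the estimate is either classical or follows from a short Fourier computation, and then to interpolate in the mixed-norm lateral spaces. First I would reduce the configuration: since $e^{it\Delta}$ commutes with rotations of $\RR$, the $L^2$ norm is rotation invariant, the multiplier $P_{k,\theta}$ transforms covariantly, and $\|\cdot\|_{L^{p,q}_\theta}$ is rotation covariant, we may assume $\theta = e_1$, writing a point of $\RR$ as $x = (x_1,x_2)$, so that $\|\cdot\|_{L^{p,q}_\theta} = \|\cdot\|_{L^p_{x_1}L^q_{x_2,t}}$. Conjugating by the dilation $f \mapsto f(\lambda\,\cdot)$ turns $e^{it\Delta}P_k$ into a parabolic rescaling of $e^{it\Delta}P_{k-\log_2\lambda}$, so we may also take $k=0$; the stated power $2^{k(2/p-1/2)}$ is precisely the one forced by this scaling.

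\emph{First family ($q \le p \le \infty$, cutoff $P_{k,\theta}$).} The endpoints are $(p,q) = (4,4)$ and $(p,q)=(\infty,2)$. The first is the classical $L^4_{t,x}$ Strichartz estimate recorded above (no gain, as $2^{k(2/4-1/2)}=1$). For the second — a lateral local smoothing bound — fix $x_1$ and regard $h(x_1,\cdot,\cdot) := e^{it\Delta}P_{0,e_1}f(x_1,\cdot)$ as a function of $(x_2,t)$. Under the change of variables $(\xi_1,\xi_2)\mapsto(\xi_2,\tau)$ with $\tau = \xi_1^2+\xi_2^2$, the pair $(\xi_2,\tau)$ is the space–time Fourier variable of $h(x_1,\cdot,\cdot)$, with symbol $e^{ix_1\xi_1(\tau,\xi_2)}\chi_0(\xi_1)\hat f$ and Jacobian $(2\xi_1)^{-1}$; since $|e^{ix_1\xi_1}|\equiv 1$, Plancherel in $(x_2,t)$ produces a quantity independent of $x_1$, namely $(\int|\chi_0(\xi_1)\hat f|^2(2\xi_1)^{-1}\,d\xi_1 d\xi_2)^{1/2}$, which is $\lesssim\|f\|_{L^2}$ because $|\xi_1|\sim 1$ on the support of $P_{0,e_1}$ (for general $k$ one has $|\xi_1|\sim 2^k$, which is the source of the $2^{-k/2}$ gain). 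The lateral cutoff is essential here: with $P_k$ in place of $P_{k,\theta}$ the $\xi_1$-integral would diverge near $\xi_1=0$. Complex interpolation of $L^p_{x_1}L^q_{x_2,t}$ between these endpoints yields the estimate for all $4\le p\le\infty$ with $1/p+1/q=1/2$.

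\emph{Second family ($2 < p \le q$, cutoff $P_k$).} Now no lateral localization is available and there is no gain to be had. I would first treat $2 < p \le 3$, i.e. $q \ge 6$, by a reduction to one dimension: by Minkowski's integral inequality ($p\le q$), $\|h\|_{L^p_{x_1}L^q_{x_2,t}} \le \|h\|_{L^q_{x_2,t}L^p_{x_1}}$ for $h = e^{it\Delta}P_0 f$. For fixed $(x_2,t)$ the function $h(\cdot,x_2,t)$ is band-limited in $x_1$ to $|\xi_1|\lesssim 1$, so Bernstein gives $\|h(\cdot,x_2,t)\|_{L^p_{x_1}}\lesssim\|h(\cdot,x_2,t)\|_{L^2_{x_1}}$, and Plancherel in $x_1$ identifies the latter with $\|(e^{it\partial_{x_2}^2}g_{\xi_1})(x_2)\|_{L^2_{\xi_1}}$, where $g_{\xi_1}$ is the inverse Fourier transform in $\xi_2$ of $\chi_0(|\xi|)\hat f(\xi_1,\cdot)$ and has $x_2$-frequency $\lesssim 1$. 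One more use of Minkowski ($q\ge 2$) followed by the one-dimensional $L^6_{t,x}$ Strichartz estimate — together with one-dimensional Bernstein when $q>6$ — bounds the outcome by $(\int|\chi_0(|\xi|)\hat f|^2)^{1/2}\le\|f\|_{L^2}$. For the remaining range $3\le p\le 4$, interpolate this $p=3$ bound against the $L^4_{t,x}$ Strichartz estimate. Together these cover $2 < p\le 4$, which is the full second family.

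\emph{Main obstacle.} The delicate point is the endpoint $p=2$, which is genuinely excluded rather than an artifact of the method: the would-be bound $\|e^{it\Delta}P_k f\|_{L^2_{x_1}L^\infty_{x_2,t}}\lesssim 2^{k/2}\|f\|_{L^2}$ fails — a direct $TT^*$ estimate built from the dispersive bound for the two-dimensional Schr\"odinger kernel diverges logarithmically — so one cannot simply interpolate down to $p=2$, and the one-dimensional reduction above operates only while $q<\infty$. One must also check that complex interpolation is legitimate for the iterated mixed-norm spaces $L^p_{x_1}L^q_{x_2,t}$ (routine, these being Banach lattices) and treat the $L^\infty$ endpoints of that interpolation with the customary care. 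None of this is serious, but it is where the argument has to be run carefully.
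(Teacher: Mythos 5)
The paper itself does not prove this lemma; it simply cites \cite[\S3, \S7]{BeIoKeTa11}. So your proposal is an attempt at a genuinely new proof rather than a reconstruction of the paper's.

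Your treatment of the first family is sound: the $(4,4)$ endpoint is the unlocalized $L^4_{t,x}$ Strichartz estimate, the $(\infty,2)$ endpoint is the lateral local smoothing bound via the change of variables $\xi_1 \mapsto \tau = |\xi|^2$ (after splitting $\chi_0$ into its positive and negative $\xi_1$ halves so that the change of variables is injective), the scaling that forces the exponent $2^{k(2/p-1/2)}$ is correct, and Benedek--Panzone-type complex interpolation for iterated mixed Lebesgue norms is legitimate here.

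The second family argument, however, has a genuine gap at the very first step. You invoke Minkowski's inequality in the form
\begin{equation*}
\lVert h \rVert_{L^p_{x_1} L^q_{x_2,t}} \; \leq \; \lVert h \rVert_{L^q_{x_2,t} L^p_{x_1}} \quad \text{when } p \leq q,
\end{equation*}
but the inequality runs the opposite way. Minkowski's integral inequality gives $\lVert F \rVert_{L^a_u L^1_v} \leq \lVert F \rVert_{L^1_v L^a_u}$ for $a \geq 1$, and its standard generalization is $\lVert F \rVert_{L^a_u L^b_v} \leq \lVert F \rVert_{L^b_v L^a_u}$ for $a \geq b$: one may always pass from outer-large/inner-small to outer-small/inner-large, never the reverse. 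A diagonal example makes this concrete: with $F = 1_{\{|u-v|<\varepsilon\}}$ on $[0,1]^2$ one has $\lVert F \rVert_{L^p_u L^q_v} \sim \varepsilon^{1/q}$ and $\lVert F \rVert_{L^q_v L^p_u} \sim \varepsilon^{1/p}$, so for $p < q$ the left side is the larger one. Since the second family is exactly the regime $p \leq q$, your reduction bounds the wrong side: you control $\lVert h \rVert_{L^q_{x_2,t} L^p_{x_1}}$, which is dominated by (not dominating) the quantity $\lVert h \rVert_{L^p_{x_1} L^q_{x_2,t}}$ you need.

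There is a telling symptom that should have flagged this. If your Minkowski step were valid, the remainder of the argument (one more Minkowski, one-dimensional Strichartz plus Bernstein) would go through uniformly all the way to $q = \infty$, $p = 2$, yielding $\lVert e^{it\Delta} P_k f \rVert_{L^2_{x_1} L^\infty_{x_2,t}} \lesssim 2^{k/2} \lVert f \rVert_{L^2}$ --- precisely the two-dimensional lateral maximal function bound that, as you correctly note in your ``Main obstacle'' paragraph, fails with a logarithmic divergence. An argument that proves a known-false endpoint must contain an error, and the error is the Minkowski direction. The fact that the maximal function endpoint fails is not an incidental boundary issue with the method; it is the reason the whole second family is hard, because it rules out the swap that would reduce to one dimension. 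The actual proof in \cite{BeIoKeTa11} takes a different route (a $TT^*$-type argument adapted to the lateral mixed norms, interpolated against the $L^4_{t,x}$ estimate), and the $p$-dependent constant in the lemma statement is there precisely to absorb the blow-up as $p \downarrow 2$.
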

In the Schr\"odinger map setting, local smoothing spaces were first used in \cite{IoKe06}
and subsequently in \cite{IoKe07-2, BeIoKe07, Be08, BeIoKeTa11}.
\begin{lem}[Local smoothing \cite{IoKe06, IoKe07-2}]
Let $f \in L_x^2(\RR)$, $k \in \mathbb{Z}$, and $\theta \in \mathbb{S}^1$.  Then
\begin{equation*}
\lVert e^{i t \Delta} P_{k, \theta} f \rVert_{L_{\theta}^{\infty, 2}}
\lesssim
2^{-k/2} \lVert f \rVert_{L_x^2}
\end{equation*}
For $f \in L_x^2(\mathbb{R}^d)$, 
the maximal function space bound
\begin{equation*}
\lVert e^{i t \Delta} P_k f \rVert_{L_\theta^{2, \infty}} \lesssim
2^{k(d-1)/2} \lVert f \rVert_{L_x^2}
\end{equation*}
holds in dimension $d \geq 3$.
\label{L:LS}
\end{lem}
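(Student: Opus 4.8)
I would prove the two estimates by separate arguments. For the local smoothing bound, the plan is to reduce to one spatial dimension. Since $e^{it\Delta}$ commutes with rotations and Lebesgue measure is rotation invariant, we may assume $\theta = (1,0)$, so that $P_{k,\theta}$ is the Fourier multiplier $\chi_k(\xi_1)$ localizing the first frequency variable to $|\xi_1| \sim 2^k$. Writing $x = (x_1,x_2) \in \R \times \R$ and factoring $e^{it\Delta} = e^{it\partial_{x_1}^2} e^{it\partial_{x_2}^2}$, I would take the Fourier transform only in the transverse variable $x_2$; then $e^{it\partial_{x_2}^2}$ becomes multiplication by the unimodular factor $e^{-it\xi_2^2}$, so that Plancherel in $x_2$ followed by Fubini in $(\xi_2,t)$ makes that factor drop out and reduces the claim to the one-dimensional estimate
\[
\lVert e^{it\partial_x^2} P_k g \rVert_{L_x^\infty L_t^2} \lesssim 2^{-k/2} \lVert g \rVert_{L_x^2}, \qquad g : \R \to \C,
\]
applied with $g = \mathcal F_{x_2} f(\cdot, \xi_2)$ for each fixed $\xi_2$ and then integrated back over $\xi_2$ using Plancherel.

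The one-dimensional bound is the classical Kato--Kenig--Ponce--Vega local smoothing estimate, localized at frequency $2^k$. To prove it, split $g = g_+ + g_-$ according to the sign of the frequency and, on the support of $\widehat{g_+}$, change variables $\tau = \xi^2$, so that $d\xi = (2\sqrt{\tau})^{-1}\,d\tau$ with $\sqrt{\tau} \sim 2^k$. For fixed $x$, the function $t \mapsto (e^{it\partial_x^2} g_+)(x)$ is, up to constants, the inverse Fourier transform in $t$ of $\tau \mapsto e^{ix\sqrt{\tau}}\,\widehat{g_+}(\sqrt{\tau})\,(2\sqrt{\tau})^{-1}$, so Plancherel in $t$ gives
\[
\lVert (e^{it\partial_x^2} g_+)(x) \rVert_{L_t^2}^2 \sim \int |\widehat{g_+}(\xi)|^2\,|\xi|^{-1}\,d\xi \sim 2^{-k} \lVert g_+ \rVert_{L_x^2}^2
\]
uniformly in $x$, and likewise for $g_-$. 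This finishes the first estimate; note that the $P_{k,\theta}$ localization is essential here, since the weight $|\xi|^{-1}$ is only comparable to $2^{-k}$ once the frequency along $\theta$ is $\sim 2^k$.

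For the maximal bound in dimension $d \geq 3$, I would argue by duality and a $TT^*$ argument: it suffices to bound $\lVert \int_\R e^{-it\Delta} P_k h(\cdot,t)\,dt \rVert_{L_x^2}$ for $h$ ranging over a dense subclass of $L^2_{x_1} L^1_{x_2,t}$. Expanding the square produces the kernel $K_k$ of $e^{i\tau\Delta} P_k^2$, which after the rescaling $\xi = 2^k\zeta$ equals $2^{dk} K_0(2^k z, 2^{2k}\tau)$ with $K_0(a,b) = \int e^{i(a\cdot\zeta - b|\zeta|^2)} \chi_0(|\zeta|)^2\,d\zeta$. Stationary phase gives $|K_0(a,b)| \lesssim \min(1,|b|^{-d/2})$ together with rapid decay once $|a| \gg 1 + |b|$, hence $|K_k(z,\tau)| \lesssim \min(2^{dk},|\tau|^{-d/2})$ with rapid decay outside $|z| \lesssim 2^k\max(|\tau|,2^{-2k})$. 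Decomposing the time separation $|\tau|$ into dyadic ranges and applying Young's and Schur's inequalities in the $x_1$ variable on each range --- retaining the decay of $K_k$ in $x_1$ that survives after a supremum in the transverse directions --- one controls each dyadic contribution, and the sum over scales converges precisely because $d \geq 3$. I would refer to \cite{IoKe06, IoKe07-2} for the details of that summation.

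I expect that last summation to be the main obstacle. The bare size bound for $K_k$ spreads mass of order $|\tau|^{-d/2}$ over a transverse region of measure $\sim (2^k|\tau|)^{d-1}$, so the dyadic-in-time pieces do not obviously sum; one must also exploit that $e^{it\Delta} P_k f$ varies in $t$ only on scale $2^{-2k}$ --- equivalently, use the residual oscillation of $K_k$ --- to close the bound, and this is exactly where the argument breaks down for $d \leq 2$. The first estimate, by contrast, is completely elementary once the reduction to one dimension has been carried out.
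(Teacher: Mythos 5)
The paper offers no proof of this lemma --- it simply cites \cite{IoKe06, IoKe07-2} --- so your proposal supplies details the paper deliberately leaves to the literature. Your argument for the local smoothing estimate is complete and correct: the rotation to $\theta = (1,0)$, the factorization $e^{it\Delta} = e^{it\partial_{x_1}^2}e^{it\partial_{x_2}^2}$ with Plancherel killing the transverse unimodular phase, and the one-dimensional change of variables $\tau = \xi^2$ followed by Plancherel in $t$ is exactly the classical Kato/Kenig--Ponce--Vega computation, and you correctly identify that the frequency localization $P_{k,\theta}$ is what makes the Jacobian weight $|\xi|^{-1}$ comparable to $2^{-k}$.

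Your $TT^*$ sketch for the maximal bound is the right strategy, but at the very end you overstate the obstacle. The kernel estimates you already recorded --- $|K_k(z,\tau)| \lesssim \min(2^{dk}, |\tau|^{-d/2})$ with rapid decay outside $|z|\lesssim 2^k\max(|\tau|,2^{-2k})$ --- already close the argument without any appeal to residual oscillation in $\tau$. Take the supremum over the transverse spatial variables $z_\perp$ and over $\tau$ at fixed $z_1$: rapid decay forces $|\tau| \gtrsim 2^{-k}|z_1|$, so
$\sup_{z_\perp,\tau}|K_k(z_1,z_\perp,\tau)| \lesssim \min\bigl(2^{dk},\; 2^{dk/2}|z_1|^{-d/2}\bigr)$.
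Its $L^1_{z_1}$ norm is $\sim 2^{(d-1)k}$ precisely when $d/2 > 1$, i.e.\ $d \geq 3$, and then Young's inequality in the $z_1$-convolution (with $\tilde h(y_1) = \lVert h(y_1,\cdot,\cdot)\rVert_{L^1_{y_\perp,t'}}$) gives $\lVert TT^*\rVert_{L^{2,1}_\theta\to L^{2,\infty}_\theta} \lesssim 2^{(d-1)k}$, hence the stated $2^{k(d-1)/2}$ for $T$. The threshold $d=2$ fails because $|z_1|^{-1}$ is not integrable at infinity --- this is the logarithmic divergence the paper alludes to and the reason the Galilean-averaged spaces $L^{2,\infty}_{\theta,W_k}$ of Lemma \ref{LSMF} are needed in the two-dimensional problem. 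So your proposal is correct, and in fact the closing summation is easier than you feared.
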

In $d = 2$, the maximal function bound fails due to a logarithmic divergence.
In order to overcome this, we exploit Galilean invariance as in \cite{BeIoKeTa11}
(the idea goes back to \cite{Tat01} in the setting of wave maps).

For $p, q \in [1, \infty]$, $\theta \in \mathbb{S}^1$, $\lambda \in \mathbb{R}$,
define $L_{\theta, \lambda}^{p,q}$ using the norm
\begin{equation*}
\lVert f \rVert_{L_{\theta, \lambda}^{p,q}} :=
\lVert T_{\lambda \theta}(f) \rVert_{L_\theta^{p,q}} =
\left( \int_{\R} \left(
\int_{H_\theta \times \R}
\lvert f((x_1 + t \lambda) \theta + x_2, t)\rvert^q dx_2 dt
\right)^{p/q} dx_1 \right)^{1/p}
\end{equation*}
where $T_w$ denotes the Galilean transformation
\begin{equation*}
T_w(f)(x,t) := e^{-i x \cdot w / 2} e^{-i t \lvert w \rvert^2 / 4} f(x + t w, t)
\end{equation*}
With $W \subset \mathbb{R}$ finite we define the spaces $L_{\theta, W}^{p,q}$ by
\begin{equation*}
L_{\theta, W}^{p,q} := \sum_{\lambda \in W} L_{\theta, \lambda}^{p,q}, \quad \quad
\lVert f \rVert_{L_{\theta, W}^{p,q}} :=
\inf_{f = \sum_{\lambda \in W} f_\lambda} \sum_{\lambda \in W}
\lVert f_\lambda \rVert_{L_{\theta, \lambda}^{p,q}}
\end{equation*}
For $k \in \mathbb{Z}$, $\mathcal{K} \in \mathbb{Z}_+$, set
\begin{equation*}
W_k := \{ \lambda \in [-2^k, 2^k] : 2^{k + 2 \mathcal{K}} \lambda \in \mathbb{Z} \}
\end{equation*}
In our application we shall work on a finite time interval
$[- 2^{2 \mathcal{K}}, 2^{2 \mathcal{K}}]$ in order to ensure that the $W_k$
are finite.  This still suffices for proving global results so long as our effective bounds
are proved with constants independent of $T, \mathcal{K}$.
As discussed in \cite[\S 3]{BeIoKeTa11}, restricting $T$ to a finite time interval
avoids introducing additional technicalities.

\begin{lem}[Local smoothing/maximal function estimates]
Let $f \in L_x^2(\RR)$, $k \in \mathbb{Z}$, and $\theta \in \mathbb{S}^1$.  Then
\begin{equation*}
\lVert e^{i t \Delta} P_{k, \theta} f \rVert_{L_{\theta, \lambda}^{\infty, 2}}
\lesssim 2^{-k / 2} \lVert f \rVert_{L_{x}^2}, \quad \quad
\lvert \lambda \rvert \leq 2^{k - 40}
\end{equation*}
and moreover, if $T \in (0, 2^{2 \mathcal{K}}]$, then 
\begin{equation*}
\lVert 1_{[-T, T]}(t) e^{i t \Delta} P_k f
\rVert_{L_{\theta, W_{k + 40}}^{2, \infty}} \lesssim
2^{k/2} \lVert f \rVert_{L_{x}^2}
\end{equation*}
\label{LSMF}
\end{lem}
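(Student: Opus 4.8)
The first estimate is a direct consequence of Galilean invariance and the local smoothing bound of Lemma~\ref{L:LS}. The plan is to observe that $e^{it\Delta}$ intertwines with the Galilean transformation, so that $T_{\lambda\theta}\bigl(e^{it\Delta}P_{k,\theta}f\bigr) = e^{it\Delta}g$ with $g(x) = e^{-ix\cdot\lambda\theta/2}(P_{k,\theta}f)(x)$, whence $\lVert e^{it\Delta}P_{k,\theta}f\rVert_{L_{\theta,\lambda}^{\infty,2}} = \lVert e^{it\Delta}g\rVert_{L_\theta^{\infty,2}}$. Multiplication by $e^{-ix\cdot\lambda\theta/2}$ translates the Fourier support of $P_{k,\theta}f$ by $\lambda\theta/2$ in the $\theta$-direction; since $\lvert\lambda\rvert\le 2^{k-40}$, the function $g$ is still supported where $\lvert\xi\cdot\theta\rvert\sim 2^k$, so $g=\sum_{\lvert j-k\rvert\le 2}P_{j,\theta}g$ after a harmless fattening of the multipliers. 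Applying Lemma~\ref{L:LS} to each of the $O(1)$ pieces and using $\lVert g\rVert_{L^2_x}=\lVert P_{k,\theta}f\rVert_{L^2_x}\le\lVert f\rVert_{L^2_x}$ gives the claim.

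The second estimate is the genuinely two-dimensional maximal function bound: the direct analogue of the $d\ge 3$ statement in Lemma~\ref{L:LS} fails by a logarithmic factor in $d=2$, and it is precisely to repair this that the Galilean sum space $L_{\theta,W_{k+40}}^{2,\infty}$ is used, following Tataru's treatment of $2$D wave maps and its adaptation in \cite{BeIoKeTa11}. The plan is to decompose $1_{[-T,T]}(t)\,e^{it\Delta}P_k f$ into pieces indexed by $\lambda\in W_{k+40}$, where the piece attached to $\lambda$ isolates the portion of the solution whose longitudinal frequency $\xi\cdot\theta$ lies in a window centered near $\lambda/2$ of width comparable to the spacing $2^{-k-2\mathcal{K}}$ of $W_{k+40}$, together, where needed, with a compatible restriction of $t$ to a subinterval of $[-T,T]$. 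For such a piece the boost $T_{\lambda\theta}$ recenters the longitudinal frequency at the origin, after which the longitudinal phase $e^{-it(\xi\cdot\theta)^2}$ is negligible over a time interval of length $\le 2^{2\mathcal{K}}$ (since $\lvert t\rvert\,\lvert\xi\cdot\theta\rvert^2\ll 1$); the evolution is then effectively frozen in the $x_1$-variable and the remaining transverse behavior is controlled by a Bernstein inequality in $x_2$, producing the gain $2^{k/2}$ with no logarithm. One then sums these bounds over $\lambda\in W_{k+40}$, exploiting orthogonality of the frequency windows together with the $\ell^1$ structure of the sum-space norm; the precise choice of $W_{k+40}$ (range $\le 2^{k+40}$, spacing $2^{-k-40-2\mathcal{K}}$) and the restriction of the time interval to $[-2^{2\mathcal{K}},2^{2\mathcal{K}}]$ are exactly what make this summation close.

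The main obstacle is this last step: one must organize the decomposition so that \emph{each} piece genuinely satisfies a logarithm-free bound with constant $2^{k/2}$ times the $L^2$ norm of that piece, while \emph{simultaneously} arranging that assembling these constants over all of $W_{k+40}$ — via the duality/$TT^*$ pairing with the local smoothing estimate of the first part, rather than a crude triangle inequality — does not reintroduce any loss. This is the content of \cite[\S3,\S7]{BeIoKeTa11}, whose construction I would import, the only verification being that the normalizations here for $\eta_0$, $W_k$, and $T_w$ agree with theirs.
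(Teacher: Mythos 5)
Your argument for the first bound reproduces exactly what the paper does: a Galilean boost reduces it to Lemma~\ref{L:LS}, using that the Fourier shift by $\pm\lambda\theta/2$ with $\lvert\lambda\rvert\le 2^{k-40}$ leaves the longitudinal frequency inside a slightly fattened $P_{k,\theta}$ window, and that $\lVert g\rVert_{L^2}=\lVert P_{k,\theta}f\rVert_{L^2}$. For the second bound the paper likewise does not give a proof but simply cites \cite[\S 7]{BeIoKeTa11}; your heuristic sketch (narrow longitudinal-frequency windows, boosting, Bernstein, $\ell^1$ assembly over $W_{k+40}$) is a reasonable description of the difficulty but is not itself a proof, and since you explicitly defer to that reference your proposal matches the paper's approach.
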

\begin{proof}
The first bound follows from Lemma \ref{L:LS} via a Galilean boost.
The second is more involved and proven in \cite[\S 7]{BeIoKeTa11}.
\end{proof}

We now introduce the main function spaces, which follow the modifications
in \cite{Sm10} of spaces introduced in \cite{BeIoKeTa11}.

Let $T > 0$.  For $k \in \mathbb{Z}$, let $I_k = \{ \xi \in \mathbb{R}^2 : \lvert \xi \rvert
\in [2^{k-1}, 2^{k+1}] \}$.  
Let
\begin{equation*}
L_k^2(T) := \{ f \in L^2(\mathbb{R}^2 \times [-T, T] ) : \mathrm{supp}\; \hat{f}(\xi, t) \subset I_k \times
[-T, T] \}
\end{equation*}

For $f \in L^2 (\mathbb{R}^2 \times [-T, T] )$, let
\begin{equation*}
\begin{split}
\lVert f \rVert_{F_k^0(T)} :=& \;
\lVert f \rVert_{L_t^\infty L_x^2} +
\lVert f \rVert_{L_{t,x}^4}
+ 2^{-k/2} \lVert f \rVert_{L_x^4 L_t^\infty} \\
&+ 2^{-k/6} \sup_{\theta \in \mathbb{S}^1}
\lVert f \rVert_{L^{3,6}_\theta} + 
+ 2^{-k/2} \sup_{\theta \in \mathbb{S}^1}
\lVert f \rVert_{L_{\theta, W_{k + 40}}^{2, \infty}}
\end{split}
\end{equation*}
Define $F_k(T)$, $G_k(T)$, $N_k(T)$ as the normed spaces of functions in $L_k^2(T)$ for which the corresponding norms are finite:
\begin{align*}
\lVert f \rVert_{F_k(T)} :=& \inf_{J, m_1, \ldots, m_J \in \mathbb{Z}_+ }
\inf_{f = f_{m_1} + \cdots + f_{m_J}}
\sum_{j = 1}^J 2^{m_j} \lVert f_{m_j} \rVert_{F_{k + m_j}^0} \\
\lVert f \rVert_{G_k(T)} :=& \;\lVert f \rVert_{F_k^0(T)}
+ 2^{k / 6} \sup_{\lvert j - k \rvert \leq 20} \sup_{\theta \in \mathbb{S}^1}
\lVert P_{j, \theta} f \rVert_{L_\theta^{6,3}} \\
&+ 2^{k/2} \sup_{\lvert j - k \rvert \leq 20} \sup_{\theta \in \mathbb{S}^1}
\sup_{\lvert \lambda \rvert < 2^{k - 40}}
\lVert P_{j, \theta} f \rVert_{L_{\theta, \lambda}^{\infty, 2}} \\
\lVert f \rVert_{N_k(T)} :=& \inf_{f = f_1 + f_2 + f_3 + f_4 + f_5 + f_6}
\lVert f_1 \rVert_{L_{t,x}^{4/3}}
+ 2^{k/6} \lVert f_2 \rVert_{L_{\hat{\theta}_1}^{3/2, 6/5}}
+ 2^{k/6} \lVert f_3 \rVert_{L_{\hat{\theta}_2}^{3/2, 6/5}} \\
& + 2^{-k/6}
\lVert f_4 \rVert_{L^{6/5,3/2}_{\hat{\theta}_1}}
+ 2^{-k/6}
\lVert f_5 \rVert_{L^{6/5,3/2}_{\hat{\theta}_2}}
+ 2^{-k/2} \sup_{\theta \in \mathbb{S}^1} 
\lVert f_6 \rVert_{L_{\theta, W_{k - 40}}^{1,2}}
\end{align*}
where $(\hat{\theta}_1, \hat{\theta}_2)$ denotes the canonical basis in $\mathbb{R}^2$.

These spaces are related via the following linear estimate, which is proved in \cite{BeIoKeTa11}.
\begin{prop}[Main linear estimate]
Assume $\mathcal{K} \in \mathbb{Z}_+$, $T \in (0, 2^{2 \mathcal{K}}]$ and $k \in \mathbb{Z}$.  Then
for each $u_0 \in L^2$ that is frequency-localized to $I_k$ and for any $h \in N_k(T)$, the solution $u$ of
\begin{equation*}
(i \partial_t + \Delta_x) u = h, \quad\quad u(0) = u_0
\end{equation*}
satisfies
\begin{equation*}
\lVert u \rVert_{G_k(T)} \lesssim \lVert u(0) \rVert_{L_x^2} + \lVert h \rVert_{N_k(T)}
\end{equation*}
\label{MainLinearEstimate}
\end{prop}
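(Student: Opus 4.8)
**

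The statement to prove is Proposition~\ref{MainLinearEstimate}: the inhomogeneous magnetic-free Schr\"odinger equation $(i\partial_t + \Delta)u = h$ with frequency-localized data obeys the linear estimate $\lVert u\rVert_{G_k(T)} \lesssim \lVert u(0)\rVert_{L^2_x} + \lVert h\rVert_{N_k(T)}$, where $G_k(T)$ and $N_k(T)$ are the specific bundles of Strichartz, lateral Strichartz, local smoothing, and maximal-function norms defined above. Let me sketch how I would prove this.

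The plan is to reduce everything to the free evolution and then match each component of $G_k$ against each component of $N_k$ by duality and a Christ--Kiselev-type argument. First I would use the Duhamel formula $u(t) = e^{it\Delta}u(0) - i\int_0^t e^{i(t-s)\Delta} h(s)\,ds$. For the homogeneous piece $e^{it\Delta}u(0)$, every norm appearing in the definition of $\lVert\cdot\rVert_{G_k(T)}$ is bounded by $\lVert u(0)\rVert_{L^2_x}$: the $L^\infty_t L^2_x$ bound is conservation of mass, the $L^4_{t,x}$ bound is the Strichartz estimate, the $L^4_x L^\infty_t$ and lateral $L^{p,q}_\theta$ bounds are Lemmas 3.3--3.4 (the lateral Strichartz estimates, valid for the exponents $(3,6)$, $(6,3)$ used here since $1/p+1/q=1/2$), the $L^{\infty,2}_{\theta,\lambda}$ local-smoothing bound with the restriction $|\lambda| < 2^{k-40}$ is the first bound of Lemma 3.7, and the $L^{2,\infty}_{\theta,W_{k+40}}$ maximal-function bound is the second bound of Lemma 3.7 (this is exactly the reason $T$ is restricted to $(-2^{2\mathcal K},2^{2\mathcal K})$, so that $W_{k+40}$ is finite). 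Here the frequency localization of $u(0)$ to $I_k$ is essential, and the extra $P_{j,\theta}$ with $|j-k|\le 20$ in the $G_k$ norm is harmless since $P_{j,\theta}$ commutes with $e^{it\Delta}$ and is disposable on functions frequency-localized near $2^k$.

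For the inhomogeneous piece, I would decompose $h = \sum_{i=1}^6 h_i$ according to the infimum defining $\lVert h\rVert_{N_k(T)}$ and estimate $\int_0^t e^{i(t-s)\Delta} h_i(s)\,ds$ in $G_k(T)$ separately for each $i$. The mechanism in each case is: (i) a dual Strichartz/local-smoothing estimate controlling $\int_{\mathbb R} e^{-is\Delta} h_i(s)\,ds$ in $L^2_x$ by the corresponding $N_k$-norm of $h_i$ --- these are the adjoints of the estimates in Lemmas 3.3, 3.4, 3.7 (so $L^{4/3}_{t,x}$ dualizes against $L^4_{t,x}$, $L^{3/2,6/5}_\theta$ against $L^{3,6}_\theta$, $L^{6/5,3/2}_\theta$ against $L^{6,3}_\theta$, and $L^{1,2}_{\theta,W_{k-40}}$ against $L^{\infty,2}_{\theta,W_{k-40}}$, with the $2^{\pm k/6}$, $2^{\pm k/2}$ weights exactly compensating the scaling factors $2^{k(2/p-1/2)}$ in those lemmas); then (ii) composing with the homogeneous bounds above to control $e^{it\Delta}\big(\int_{\mathbb R} e^{-is\Delta}h_i(s)\,ds\big)$ in every $G_k$ norm; and finally (iii) passing from the full integral $\int_{\mathbb R}$ to the retarded integral $\int_0^t$ via the Christ--Kiselev lemma, which applies because in every relevant pairing the exponent on the ``output'' side is strictly larger than on the ``input'' side (e.g.\ $4 > 4/3$, $(6,3)$ vs $(3/2,6/5)$, etc.) --- this is where one checks the strict inequalities $2 < p \le \infty$ in the lateral-Strichartz hypotheses are genuinely used. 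The $L^\infty_t L^2_x$ component of $G_k$ needs no Christ--Kiselev and follows directly from the energy estimate applied to $h_i$ term by term.

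The main obstacle is the maximal-function/local-smoothing pairing: controlling $\int_0^t e^{i(t-s)\Delta} h_6(s)\,ds$ in the $L^{2,\infty}_{\theta,W_{k+40}}$ norm given $h_6 \in L^{1,2}_{\theta,W_{k-40}}$. The frequency-dependent Galilean-parameter sets $W_{k\pm 40}$ do not match, and one cannot naively combine a $W_{k-40}$ decomposition of the input with the $W_{k+40}$ output norm; one must exploit that Galilean boosts by $\lambda \in W_{k-40}$ interact with the output through a frequency shift that, after the boost, keeps the relevant frequencies within the $|j-k|\le 20$ window, together with the two-parameter (space $\times$ time, with the boost) structure of the lateral norms. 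This delicate bookkeeping --- together with the fact that the $d=2$ maximal function bound only holds in the boosted, time-truncated form of Lemma~\ref{LSMF} --- is the heart of the argument and is carried out in detail in \cite[\S3, \S7]{BeIoKeTa11}; since the statement is quoted from there, I would simply invoke that reference for this component while writing out the easier Strichartz and lateral-Strichartz pairings explicitly.
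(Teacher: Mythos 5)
The paper does not give its own proof of this proposition---it simply states that it ``is proved in \cite{BeIoKeTa11}'' and moves on---so there is no internal argument to compare against. Your sketch is a correct high-level reconstruction of how such estimates are established in that reference: Duhamel, the homogeneous bounds from mass conservation, Strichartz, lateral Strichartz, and the boosted local-smoothing/maximal-function lemmas, followed by duality and a Christ--Kiselev pass for the retarded integral. Your bookkeeping of the exponent and weight pairings ($2^{\pm k/6}$ against the lateral-Strichartz scaling $2^{k(2/p-1/2)}$ for $p=3,6$, and $2^{\pm k/2}$ for the smoothing/maximal pair) is accurate, and you correctly identify the $L^{2,\infty}_{\theta,W_{k+40}}$ versus $L^{1,2}_{\theta,W_{k-40}}$ coupling---with its mismatched Galilean-parameter sets---as the genuine obstacle, for which you, like the paper, defer to \cite{BeIoKeTa11}. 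One small caveat worth noting: the ``output exponent strictly larger than input exponent'' heuristic you invoke for Christ--Kiselev refers to the \emph{time} integrability exponent, which for the lateral norms $L^{p,q}_\theta$ is bundled into the inner $L^q$ (over $H_\theta\times\mathbb R$) rather than cleanly separated, so the Christ--Kiselev step in \cite{BeIoKeTa11} is not a verbatim application of the textbook lemma but requires adaptation to the anisotropic structure; your sketch glosses over this, but since you ultimately cite the reference for the detailed argument, the omission is harmless.
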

The spaces $G_k(T)$ are used to hold projections $P_k \psi_m$ of the derivative fields
$\psi_m$ satisfying (\ref{NLS}).  The main components of $G_k(T)$ are the
local smoothing/maximal function spaces
$L_{\theta, \lambda}^{\infty, 2}$,
$L_{\theta, W_{k + 40}}^{2, \infty}$,
and the angular Strichartz spaces.  The local smoothing and
maximal function space components play 
an essential role in recovering the derivative loss that arises from the magnetic
nonlinearity.

The spaces $N_k(T)$ hold frequency projections of the nonlinearities in (\ref{NLS}).  
Here the main
spaces are the inhomogeneous local smoothing spaces $L_{\theta, W_{k - 40}}^{1,2}$
and the Strichartz spaces, both chosen to match those of $G_k(T)$.

The spaces $G_k(T)$ clearly embed in $F_k(T)$.  Two key properties enjoyed only by
the larger spaces $F_k(T)$ are
\begin{equation*}
\lVert f \rVert_{F_k(T)} \approx \lVert f \rVert_{F_{k+1}(T)}
\end{equation*}
for $k \in \mathbb{Z}$ and $f \in F_k(T) \cap F_{k+1}(T)$,
and
\begin{equation*}
\lVert P_k(uv) \rVert_{F_k(T)} \lesssim
\lVert u \rVert_{F_{k^\prime}(T)}
\lVert v \rVert_{L_{t,x}^\infty}
\end{equation*}
for $k, k^\prime \in \mathbb{Z}$, $\lvert k - k^\prime \rvert \leq 20$, $u \in F_{k^\prime}(T)$,
$v \in L^\infty(\RR \times [-T, T])$.
Both of these properties follow readily from the definitions.

In order to bound the nonlinearity of (\ref{NLS}) in $N_k(T)$, it is important to gain
regularity from the parabolic heat-time smoothing effect.  The desired frequency-localized
bounds do not (at least not readily) propagate in heat-time in the spaces $G_k(T)$, 
whereas these bounds do propagate with decay
in the larger spaces $F_k(T)$.
Note that since the $F_k(T)$ norm is translation
invariant, it holds that
\begin{equation*}
\lVert e^{s \Delta} h \rVert_{F_k(T)} \lesssim (1 + s 2^{2k})^{-20} \lVert h \rVert_{F_k(T)},
\quad \quad s \geq 0
\end{equation*}
for $h \in F_k(T)$.
In certain bilinear estimates we do not need the full strength of the spaces
$F_k(T)$ and instead can use the bound
\begin{equation}
\lVert f \rVert_{F_k(T)} \lesssim \lVert f \rVert_{L_x^2 L_t^\infty} + \lVert f \rVert_{L_{t,x}^4}
\label{FsoftBound}
\end{equation}
which follows from
\begin{equation*}
\lVert f \rVert_{L_{\theta, W_{k + m_j}}^{2, \infty}} \leq
\lVert f \rVert_{L_\theta^{2, \infty}}
\lesssim
2^{k/2} \lVert f \rVert_{L_x^2 L_t^\infty}
\end{equation*}

\subsection{Bilinear estimates}

For proofs of Lemmas \ref{L:NkBilinear}, \ref{L:L2Bilinear}, and \ref{L:L3Bilinear}, see
\cite[\S3]{BeIoKeTa11}.

\begin{lem}[Bilinear estimates on $N_k(T)$]
For $k, k_1, k_3 \in \mathbb{Z}$, $h \in L_{t,x}^2$, $f \in F_{k_1}(T)$, and $g \in G_{k_3}(T)$,
we have the following inequalities under the given restrictions on $k_1, k_3$.
\begin{align}
\lvert k_1 - k \rvert \leq 80: \quad
\lVert P_k (h f) \rVert_{N_k(T)} 
&\lesssim 
\lVert h \rVert_{L_{t,x}^2} \lVert f \rVert_{F_{k_1}(T)} 
\label{bilin1} \\
k_1 \leq k - 80: \quad
\lVert P_k (h f) \rVert_{N_k(T)} 
&\lesssim 2^{- \lvert k - k_1 \rvert /6} 
\lVert h \rVert_{L_{t,x}^2} \lVert f \rVert_{F_{k_1}(T)} 
\label{bilin2} \\
k \leq k_3 - 80: \quad
\lVert P_k (h g) \rVert_{N_k(T)} 
&\lesssim 2^{- \lvert k - k_3 \rvert /6} 
\lVert h \rVert_{L_{t,x}^2} \lVert g \rVert_{G_{k_3}(T)} 
\label{bilin3}
\end{align}
\label{L:NkBilinear}
\end{lem}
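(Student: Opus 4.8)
The plan is to prove the three bilinear estimates of Lemma~\ref{L:NkBilinear} by reducing to the linear spacetime estimates already collected (Strichartz, lateral Strichartz, local smoothing/maximal function, and the Galilean variants), and by exploiting H\"older in the lateral spaces together with frequency localization. First I would perform a Littlewood--Paley decomposition of each product: write $P_k(hf) = \sum P_k(P_{k_2} h \cdot P_{k_1} f)$ where the sum is constrained by the usual trichotomy $(\text{high}\times\text{high}\to\text{low})$, $(\text{high}\times\text{low})$, $(\text{low}\times\text{high})$. For \eqref{bilin1}, where $|k_1 - k| \le 80$, the relevant interactions are $P_{k_2}h$ at frequency $\lesssim 2^k$ paired with $P_{k_1}f$ at frequency $\sim 2^k$, plus high--high interactions $k_1 \sim k_2 \gtrsim k$; in all cases I place $h$ in $L^2_{t,x}$ and exploit that the $F_{k_1}(T)$ norm controls the $L^4_{t,x}$ norm of $f$ (via the $F^0_{k+m}$ building blocks) so that H\"older $L^2_{t,x} \cdot L^4_{t,x} \hookrightarrow L^{4/3}_{t,x}$ lands in the first component of $N_k(T)$. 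The high--high case needs the almost-orthogonality / frequency-envelope summation built into the definition of $F_k(T)$ as an atomic space: summing $2^{m_j}\|f_{m_j}\|_{F^0_{k+m_j}}$ against the $L^2$ norm of $h$ with the gain coming from the fact that $P_k$ of a high--high product forces a $2^{-(k_1-k)}$-type loss offset by the $2^{m_j}$ weight.

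For \eqref{bilin2} and \eqref{bilin3}, where there is a genuine frequency gap ($k_1 \le k-80$, resp.\ $k \le k_3-80$), the output is forced to live at the higher of the two input frequencies, and the mechanism for the gain $2^{-|k-k_1|/6}$ (resp.\ $2^{-|k-k_3|/6}$) is to pair the high-frequency factor in a lateral Strichartz space $L^{3,6}_\theta$ or $L^{6,3}_\theta$ and the $L^2_{t,x}$ factor against it, using the $N_k(T)$-atoms $2^{\pm k/6}\|\cdot\|_{L^{3/2,6/5}_{\hat\theta}}$ or $2^{\pm k/6}\|\cdot\|_{L^{6/5,3/2}_{\hat\theta}}$. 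Concretely: the low-frequency $L^2_{t,x}$ function can be put into $L^{6,3}_\theta$ with at most a $2^{k_1/6}$ loss by Bernstein in the transverse direction, and the high-frequency $F_{k_1}$ (resp.\ $G_{k_3}$) factor supplies the matching $L^{3,6}_\theta$ (resp.\ $L^{6,3}_\theta$) norm with the built-in weight; the net power of $2$ works out to the claimed $2^{-|k-k_1|/6}$ after summing over the $O(1)$-many relevant dyadic shells and over the choice of $\theta \in \{\hat\theta_1,\hat\theta_2\}$. The distinction between \eqref{bilin2} and \eqref{bilin3} is exactly that in the latter we may use the angular Strichartz component $L^{6,3}_\theta$ that is present in $G_{k_3}(T)$ but \emph{not} in $F_{k_1}(T)$, which is why \eqref{bilin3} is stated with $g \in G_{k_3}(T)$ rather than $F_{k_3}(T)$.

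The main obstacle, and the step I would spend the most care on, is the high--high-to-low interaction in \eqref{bilin1} (and the analogous high--high pieces feeding \eqref{bilin2}--\eqref{bilin3}): there the summation over the internal frequencies $k_1 \sim k_2$ ranging over all scales $\ge k$ does not obviously converge from H\"older alone, and one must extract a genuine gain. This is handled by invoking the atomic structure of $F_k(T)$ --- writing $f = \sum_j f_{m_j}$ with $2^{m_j}\|f_{m_j}\|_{F^0_{k+m_j}}$ summable --- and observing that $P_k$ applied to a product of two pieces at frequency $\sim 2^{k+m_j}$ gains a factor that beats $2^{-m_j}$, for instance via the maximal-function bound $\|P_{k+m_j}u\|_{L^4_xL^\infty_t}\lesssim 2^{(k+m_j)/2}\|u\|_{L^2_x}$ paired against $L^{4/3}_xL^1_t$-type dual control, so that the $2^{m_j}$ weights are summable. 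A secondary technical point is verifying that the Galilean-boosted spaces $L^{p,q}_{\theta,\lambda}$ and the finite-average spaces $L^{p,q}_{\theta,W_{k\pm40}}$ interact correctly with $P_k$ and with products --- but this is routine given Lemma~\ref{LSMF} and the translation/boost invariance of the norms, so I would only sketch it. Since the paper says these are proved in \cite[\S3]{BeIoKeTa11}, I would present the above as the proof outline and defer the orthogonality bookkeeping to that reference.
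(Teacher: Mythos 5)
The paper does not prove Lemma~\ref{L:NkBilinear}; it explicitly defers to \cite[\S3]{BeIoKeTa11}. So there is no internal argument to compare against, and your sketch has to stand on its own. Its global shape is right --- H\"older against the $L^2_{t,x}$ factor, landing the product either in the $L^{4/3}_{t,x}$ component of $N_k(T)$ or in its dual lateral Strichartz components, with the observation that \eqref{bilin3} uses the angular $L^{6,3}_\theta$ piece of $G_{k_3}(T)$ that is absent from $F^0$ --- and that last observation, about why \eqref{bilin3} is stated for $g\in G_{k_3}(T)$, is exactly the point. But the sketch contains several concrete errors that would derail anyone trying to flesh it out.

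First, the mechanism for \eqref{bilin2}--\eqref{bilin3} is \emph{not} to ``Bernstein the $L^2_{t,x}$ factor into $L^{6,3}_\theta$.'' The function $h$ carries no frequency localization and has to stay in $L^2_{t,x}=L^{2,2}_\theta$. The gain comes from lateral H\"older paired against the $F/G$ factor:
\begin{align*}
\lVert hf \rVert_{L^{6/5,3/2}_\theta} &\le \lVert h \rVert_{L^{2,2}_\theta}\,\lVert f \rVert_{L^{3,6}_\theta},
\qquad
\lVert hg \rVert_{L^{3/2,6/5}_\theta} \le \lVert h \rVert_{L^{2,2}_\theta}\,\lVert g \rVert_{L^{6,3}_\theta},
\end{align*}
together with the weights built into $F^0$, $G_k$, $N_k$: the bound $\lVert f\rVert_{L^{3,6}_\theta}\lesssim 2^{k_1/6}\lVert f\rVert_{F^0_{k_1}}$ meets the prefactor $2^{-k/6}$ in front of $\lVert\cdot\rVert_{L^{6/5,3/2}_{\hat\theta_i}}$ inside $N_k(T)$, producing $2^{-(k-k_1)/6}$; the bound $\lVert P_{j,\theta}g\rVert_{L^{6,3}_\theta}\lesssim 2^{-k_3/6}\lVert g\rVert_{G_{k_3}}$ meets the prefactor $2^{k/6}$ in front of $\lVert\cdot\rVert_{L^{3/2,6/5}_{\hat\theta_i}}$, producing $2^{-(k_3-k)/6}$. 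The atomic structure of $F_{k_1}$ costs a factor $2^{m_j/6}$ at scale $k_1+m_j$, which is dominated by the atomic weight $2^{m_j}$, so the decomposition is harmless.

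Second, you misidentify the high/low roles in \eqref{bilin2}: there $k_1\le k-80$, so $f\in F_{k_1}$ is the \emph{low}-frequency factor, while $h$ must carry the high frequency $\sim 2^k$ for the output to survive $P_k$. The factor placed in a lateral Strichartz norm is the low-frequency one, not the high one.

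Third, the ``main obstacle'' you single out --- summing a high--high $\to$ low interaction in \eqref{bilin1} with the help of the $F_k$ atomic structure --- is not actually present in this lemma. Both $f$ and $g$ are already frequency-localized to a single dyadic shell by membership in $L^2_{k_1}(T)$ (resp.\ $L^2_{k_3}(T)$), and $h$ is never dyadically decomposed. Estimate \eqref{bilin1} is a one-line H\"older: $\lVert P_k(hf)\rVert_{N_k(T)}\le \lVert hf\rVert_{L^{4/3}_{t,x}}\le \lVert h\rVert_{L^2_{t,x}}\lVert f\rVert_{L^4_{t,x}}\lesssim\lVert h\rVert_{L^2_{t,x}}\lVert f\rVert_{F_{k_1}(T)}$, using only the $L^4_{t,x}$ component of $F^0$. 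The internal-frequency summation becomes a genuine concern only later, in the trilinear estimates (Lemma~\ref{L:Trilin1} onward), where it is handled by the frequency-envelope machinery.
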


\begin{lem}[Bilinear estimates on $L_{t,x}^2$]
For $k_1, k_2, k_3 \in \mathbb{Z}$, $f_1 \in F_{k_1}(T)$, $f_2 \in F_{k_2}(T)$, and $g \in G_{k_3}(T)$,
we have
\begin{align}
\lVert f_1 \cdot f_2 \rVert_{L_{t,x}^2} &\lesssim \lVert f_1 \rVert_{F_{k_1}(T)} \lVert f_2 \rVert_{F_{k_2}(T)}
\label{bilin4} \\
k_1 \leq k_3: \quad
\lVert f \cdot g \rVert_{L_{t,x}^2} &\lesssim
2^{- \lvert k_1 - k_3 \rvert / 6} \lVert f \rVert_{F_{k_1}(T)} \lVert g \rVert_{G_{k_3}(T)}
\label{bilin5}
\end{align}
\label{L:L2Bilinear}
\end{lem}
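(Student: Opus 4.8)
The final displayed result is Lemma~\ref{L:L2Bilinear}, the bilinear estimates on $L_{t,x}^2$: for $f_1 \in F_{k_1}(T)$, $f_2 \in F_{k_2}(T)$, $g \in G_{k_3}(T)$,
\[
\lVert f_1 f_2 \rVert_{L_{t,x}^2} \lesssim \lVert f_1 \rVert_{F_{k_1}(T)} \lVert f_2 \rVert_{F_{k_2}(T)},
\]
and, when $k_1 \leq k_3$, $\lVert f g \rVert_{L_{t,x}^2} \lesssim 2^{-|k_1 - k_3|/6} \lVert f \rVert_{F_{k_1}(T)} \lVert g \rVert_{G_{k_3}(T)}$.

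The plan is as follows. For \eqref{bilin4} I would first reduce to the case $k_1 \leq k_2$ by symmetry, and then split into the balanced regime $|k_1 - k_2| \leq C$ and the unbalanced regime $k_1 \ll k_2$. In the balanced case, I would estimate directly by Cauchy--Schwarz in a pair of dual lateral spaces: since $F_{k_i}^0(T)$ controls $L_{t,x}^4$ (and the $F_{k_i}(T)$ norm dominates $2^{0}\lVert\cdot\rVert_{F_{k_i}^0}$ in the relevant piece of the atomic decomposition), one has $\lVert f_1 f_2\rVert_{L^2_{t,x}} \le \lVert f_1\rVert_{L^4_{t,x}}\lVert f_2\rVert_{L^4_{t,x}} \lesssim \lVert f_1\rVert_{F_{k_1}(T)}\lVert f_2\rVert_{F_{k_2}(T)}$, using the $k$-summability property $\lVert f\rVert_{F_k(T)}\approx\lVert f\rVert_{F_{k+1}(T)}$ recorded in the excerpt to absorb the $O(1)$ frequency mismatch. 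In the unbalanced case $k_1 \ll k_2$, the output of $f_1 f_2$ is frequency-localized near $2^{k_2}$; I would then pick a direction $\theta$ (say $\hat\theta_1$) and pair the low-frequency factor in a space that transfers small derivative cost, e.g. $\lVert f_1 f_2\rVert_{L^2_{t,x}} \lesssim \lVert f_1\rVert_{L^{\infty,2}_\theta}\lVert f_2\rVert_{L^{2,\infty}_\theta}$ after choosing $\theta$ so the low-frequency factor lies in the local-smoothing direction; one gains $2^{-k_1/2}$ from the $L^{\infty,2}_\theta$ component of $F_{k_1}^0$ and loses $2^{k_2/2}$ from the $L^{2,\infty}_\theta$ (maximal) component of $F_{k_2}^0$, but the key point is that these spaces are \emph{angular} and the $F_k$-atom structure supplies a $2^{m_j}$ weight that recovers the loss. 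This is exactly the computation carried out in \cite[\S3]{BeIoKeTa11}, so I would invoke that reference for the orthogonality bookkeeping rather than redo it.

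For \eqref{bilin5}, the hypothesis $k_1 \le k_3$ is used together with the extra structure in $G_{k_3}(T)$: unlike $F_{k_3}(T)$, the space $G_{k_3}(T)$ contains the sharp local-smoothing components $2^{k_3/2}\sup_{|j-k_3|\le 20}\sup_\theta\sup_{|\lambda|<2^{k_3-40}}\lVert P_{j,\theta}g\rVert_{L^{\infty,2}_{\theta,\lambda}}$ and the angular Strichartz component $2^{k_3/6}\sup\lVert P_{j,\theta}g\rVert_{L^{6,3}_\theta}$. The gain $2^{-|k_1-k_3|/6}$ comes from pairing the low-frequency factor $f$ in a lateral space where it costs at most $2^{k_1/6}$-worth of derivatives (the $L^{3,6}_\theta$ piece of $F_{k_1}^0$, weighted $2^{-k_1/6}$ in the norm) against the high-frequency factor $g$ in the dual $L^{6,3}_\theta$ space, giving $\lVert fg\rVert_{L^2_{t,x}}\lesssim 2^{k_1/6}\cdot 2^{-k_3/6}\lVert f\rVert_{F_{k_1}}\lVert g\rVert_{G_{k_3}}$; more carefully one also needs a maximal/local-smoothing pairing to handle the non-Strichartz-controlled portions, pairing $\lVert P_{k_1,\theta}f\rVert_{L^{2,\infty}_\theta}\lesssim 2^{k_1/2}\lVert f\rVert_{L^2_xL^\infty_t}$ against $\lVert P_{k_3,\theta}g\rVert_{L^{\infty,2}_{\theta,\lambda}}\lesssim 2^{-k_3/2}\lVert g\rVert_{G_{k_3}}$ for a suitable Galilean parameter $|\lambda|<2^{k_3-40}$ (admissible since $k_1\le k_3-80$ forces the shear to stay in range), again netting $2^{-(k_3-k_1)/2}$ which beats $2^{-|k_1-k_3|/6}$. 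Summing the $F_{k_1}$-atom decomposition with its $2^{m_j}$ weights against these $2^{-|k_1-k_3|/6}$-type gains converges because the gain is strictly exponential.

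\textbf{Main obstacle.} The genuinely delicate point — and the reason I would lean on \cite[\S3]{BeIoKeTa11} rather than reprove it — is the orthogonality argument needed to pass from a \emph{single} angular/lateral pairing (which only controls one product $P_{j,\theta}f\cdot P_{j',\theta}g$ for one direction $\theta$) to a bound on the full product $f g$ summed over all relevant frequency blocks and all directions. One must decompose each factor into thin angular sectors, exploit almost-orthogonality of the outputs (the output frequencies of $P_{j,\theta_1}f\cdot P_{j',\theta_2}g$ separate in frequency as the sector pair varies), and carefully choose, for each piece, the direction $\theta$ along which the low-frequency factor sits in the local-smoothing space and the high-frequency factor in the maximal-function space; getting the Galilean shift parameters $\lambda$ and the window $W_{k\pm 40}$ to line up across the decomposition is where all the technical weight lies. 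Everything else is Cauchy--Schwarz plus the summation rules \eqref{Sum 1}--\eqref{Sum 2} and the norm equivalence $\lVert f\rVert_{F_k(T)}\approx\lVert f\rVert_{F_{k+1}(T)}$.
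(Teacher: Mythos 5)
The paper itself gives no proof of Lemma~\ref{L:L2Bilinear} beyond the one-line citation ``For proofs of Lemmas \ref{L:NkBilinear}, \ref{L:L2Bilinear}, and \ref{L:L3Bilinear}, see \cite[\S3]{BeIoKeTa11},'' and you ultimately defer to the same reference, so in that sense you and the paper agree. But parts of your sketch are incorrect or misdirected, and it is worth flagging them.

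For \eqref{bilin4}, the balanced/unbalanced split you propose is unnecessary, and the unbalanced branch of your argument does not work as stated. Since $\lVert \cdot \rVert_{L^4_{t,x}}$ enters $F_k^0$ with weight $1$, each atom $f_{m_j}$ satisfies $\lVert f_{m_j} \rVert_{L^4_{t,x}} \le \lVert f_{m_j}\rVert_{F^0_{k+m_j}} \le 2^{m_j}\lVert f_{m_j}\rVert_{F^0_{k+m_j}}$, whence $\lVert f\rVert_{L^4_{t,x}} \lesssim \lVert f\rVert_{F_k(T)}$ for every $k$, and H\"older in $L^4\times L^4$ gives \eqref{bilin4} directly for all $k_1,k_2$ — which is consistent with the fact that \eqref{bilin4} carries no off-diagonal gain on its right-hand side. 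Your proposed local-smoothing/maximal pairing in the unbalanced case, by contrast, claims to ``gain $2^{-k_1/2}$ from the $L^{\infty,2}_\theta$ component of $F^0_{k_1}$,'' but there is no local-smoothing component $L^{\infty,2}_{\theta,\lambda}$ in $F_k^0(T)$; that component appears only in the definition of $G_k(T)$. This is not a cosmetic point: the whole reason the trilinear machinery of \S\ref{S:Function spaces} distinguishes $F_k$ from $G_k$ is precisely that one cannot place heat-flow-propagated quantities in the local-smoothing spaces, and an $F\times F\to L^2$ estimate with off-diagonal gain would fail.

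For \eqref{bilin5}, your main mechanism is correct: pair the low-frequency factor in the lateral Strichartz space $L^{3,6}_\theta$ (costing $2^{k_1/6}$ through the weight $2^{-k_1/6}$ in $F^0_{k_1}$) against the high-frequency factor's angular component $P_{j,\theta}g$ in $L^{6,3}_\theta$ (gaining $2^{-k_3/6}$ from $G_{k_3}$), then H\"older in the dual lateral exponents $(3,6)\times(6,3)\to(2,2)$, then handle the angular and $m_j$-atom summations. That gives exactly $2^{-|k_1-k_3|/6}$ and matches the standard argument. The secondary maximal/local-smoothing pairing you append (netting $2^{-(k_3-k_1)/2}$) is \emph{not} needed for \eqref{bilin5}; it proves the strictly stronger \eqref{bilin5 improv}, which is a separate statement in Lemma~\ref{L:L3Bilinear}. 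Including it here blurs which hypotheses are actually required: \eqref{bilin5} uses only lateral Strichartz spaces plus the angular bookkeeping, whereas \eqref{bilin5 improv} is where the Galilean-boosted local-smoothing/maximal-function pairing enters. Finally, your identification of the angular orthogonality bookkeeping as the genuinely technical point, and of \cite[\S3]{BeIoKeTa11} as the place where it is carried out, is accurate.
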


We also have the following stronger estimates, which rely upon the local smoothing and
maximal function spaces.
\begin{lem}[Bilinear estimates using local smoothing/maximal function bounds]
For $k, k_1, k_2 \in \mathbb{Z}$, $h \in L^2_{t,x}$, $f \in F_{k_1}(T)$,
$g \in G_{k_2}(T)$, we have the following inequalities under
the given restrictions on $k_1, k_2$.
\begin{align}
k_1 \leq k - 80: \quad
&\lVert P_k (h f) \rVert_{N_k(T)} 
\lesssim 2^{- \lvert k - k_1 \rvert / 2} 
\lVert h \rVert_{L_{t,x}^2} \lVert f \rVert_{F_{k_1}(T)} 
\label{bilin2 improv}
\\
k_1 \leq k_2: \quad
&\lVert f \cdot g \rVert_{L_{t,x}^2} 
\lesssim
2^{- \lvert k_1 - k_2 \rvert / 2} \lVert f \rVert_{F_{k_1}(T)} 
\lVert g \rVert_{G_{k_2}(T)}
\label{bilin5 improv}
\end{align}
\label{L:L3Bilinear}
\end{lem}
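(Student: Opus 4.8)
The plan is to prove Lemma \ref{L:L3Bilinear} by exploiting the local smoothing and maximal function components of the $G_{k_2}(T)$ norm, which provide the $2^{-|k_1-k_2|/2}$ decay that the cruder Strichartz-only estimates of Lemma \ref{L:L2Bilinear} miss. Both inequalities are low-high interactions: in \eqref{bilin5 improv} we pair a low-frequency factor $f$ (localized near $2^{k_1}$) against a high-frequency factor $g$ (localized near $2^{k_2}$, $k_1 \le k_2$), and in \eqref{bilin2 improv} the output is further frequency-localized near $2^k$ with $k_1 \le k-80$, forcing $g$ (hidden inside $f$ in the statement — more precisely the second factor) to live near $2^k$ as well. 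In both cases the strategy is Hölder's inequality in a lateral (mixed-norm) space, putting $g$ into a maximal-function space $L_\theta^{2,\infty}$-type norm and $f$ into the dual local-smoothing space $L_\theta^{\infty,2}$-type norm, then cashing in the frequency separation.

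Concretely, for \eqref{bilin5 improv} I would fix a direction $\theta \in \mathbb{S}^1$ (chosen adapted to the frequency support of $g$, or simply averaged/maximized over a finite set of directions as in the definition of $G_k(T)$) and write
\begin{equation*}
\lVert f \cdot g \rVert_{L_{t,x}^2} = \lVert f \cdot g \rVert_{L_\theta^{2,2}}
\lesssim \lVert f \rVert_{L_\theta^{\infty,2}} \lVert g \rVert_{L_\theta^{2,\infty}}
\end{equation*}
via Hölder in the $x_1$ variable (exponents $\infty$ and $2$) and in the $(x_2,t)$ variables (exponents $2$ and $\infty$). For the high-frequency factor $g \in G_{k_2}(T)$, the maximal-function piece of the norm gives $\lVert P_{k_2} g \rVert_{L_{\theta, W_{k_2+40}}^{2,\infty}} \lesssim 2^{k_2/2}\lVert g\rVert_{G_{k_2}(T)}$, but one must be slightly careful since $G_{k_2}(T)$ only directly controls $L_{\theta,\lambda}^{\infty,2}$ and the $F^0$-norm contains $L_{\theta, W_{k_2+40}}^{2,\infty}$ — so the maximal bound comes through the $F_{k_2}^0$ component, giving a factor $2^{k_2/2}$. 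For the low-frequency factor $f$, I would use the local-smoothing-type bound: $\lVert P_{k_1} f\rVert_{L_\theta^{\infty,2}} \lesssim 2^{-k_1/2}\lVert f\rVert_{F_{k_1}(T)}$, again reading this off the $F_{k_1}^0$ norm. Naively multiplying gives $2^{(k_2-k_1)/2}$, which is the wrong sign; the gain comes from first summing $f$ over the $\sim 2^{k_2-k_1}$ unit-width frequency slabs transverse to $\theta$ at scale $2^{k_1}$ and exploiting that on each such slab the local smoothing constant is dictated by the transverse scale $2^{k_1}$, not $2^{k_2}$, together with almost-orthogonality in $x_1$-frequency — this is exactly the mechanism in \cite[\S3]{BeIoKeTa11} and the point of the Galilean-boosted spaces $L_{\theta,\lambda}^{\infty,2}$ and the finite sets $W_k$. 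The net effect is that the effective smoothing exponent on the output is $2^{k_2/2}$ rather than $2^{k_1/2}$, producing $2^{k_2/2} \cdot 2^{-k_2/2} = 1$ times $2^{-(k_2-k_1)/2}$ after accounting for the mismatch, i.e.\ the claimed decay.

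For \eqref{bilin2 improv} the structure is the same but now the product is projected back to frequency $\sim 2^k$ and placed in $N_k(T)$ rather than $L_{t,x}^2$; here I would use the $L_{\theta, W_{k-40}}^{1,2}$ component of the $N_k(T)$ norm. So I write $\lVert P_k(hf)\rVert_{N_k(T)} \lesssim 2^{-k/2}\sup_\theta \lVert P_k(hf)\rVert_{L_{\theta, W_{k-40}}^{1,2}}$ and then estimate $\lVert hf\rVert_{L_\theta^{1,2}} \lesssim \lVert f\rVert_{L_\theta^{2,2}}\lVert h\rVert_{L_\theta^{2,\infty}}$... actually it's cleaner to split the roles as in \eqref{bilin2}: put $h$ (which is merely $L_{t,x}^2$, so $L_\theta^{2,2}$) against the high-frequency localized $P_k f$ in a maximal-function norm $L_\theta^{\infty,\infty}$-type space — no, the available norm on $f$ at frequency $\sim 2^k$ from $F_{k_1}(T)$ when $k_1 \le k - 80$ must be interpreted via the $F_k$-scaling property and the fact that $F_{k_1}(T) \approx F_{k}(T)$ on overlapping supports fails when $k_1 \ne k$; rather, since $k_1 \le k - 80$, the \emph{product} $hf$ has a factor at low frequency $2^{k_1}$, and I should dualize so that the local-smoothing gain lands on the $2^{k_1}$ scale. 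Following \eqref{bilin2}'s proof scheme but substituting the sharp local smoothing bound of Lemma \ref{L:LS}/Lemma \ref{LSMF} (exponent $2^{-k/2}$ from $L_{\theta,\lambda}^{\infty,2}$, versus the $2^{-k/6}$ that the lateral Strichartz $L_\theta^{3,6}$ gives) upgrades the decay rate from $1/6$ to $1/2$; the rest of the bookkeeping — summing over the $O(1)$ pieces in the $N_k$ and $F_{k_1}$ decompositions, handling the Galilean boosts $\lambda \in W$, and verifying the frequency-support constraints are compatible with the angular projections $P_{j,\theta}$ — is routine and parallels \cite{BeIoKeTa11} verbatim.

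The main obstacle, and the only genuinely non-bookkeeping point, is the almost-orthogonality argument that converts the transverse-slab decomposition of the low-frequency factor into the correct power of $2^{|k_1-k_2|/2}$: one must check that decomposing $P_{k_1}f$ into $\sim 2^{k_2-k_1}$ pieces supported in $\theta$-transverse slabs of width $\sim 2^{k_1}$ is compatible with the $L^2_{x_1}$-orthogonality needed to sum the $L_\theta^{\infty,2}$ norms in $\ell^2$ rather than $\ell^1$, and that the Galilean parameter sets $W_{k_1}$ vs.\ $W_{k_2+40}$ used for $f$ and $g$ respectively are coarse/fine enough to make the pairing legitimate (this is why the definitions use $W_{k+40}$ for the maximal space in $F_k^0$ and $W_{k-40}$ for $N_k$, creating the needed buffer). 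Since the present excerpt attributes the proof to \cite[\S3]{BeIoKeTa11}, I would in practice cite that reference for this orthogonality lemma and limit the argument here to indicating how the exponents $2^{-|k_1-k_2|/2}$ arise from substituting the sharp Lemma \ref{LSMF} bounds in place of the lateral Strichartz bounds used in Lemma \ref{L:L2Bilinear}.
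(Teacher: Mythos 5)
Your overall strategy—H\"older in a lateral $L_\theta^{p,q}$ norm, pairing a maximal-function space against a local-smoothing space—is the right one, but the role assignment is backwards, and this forces you into a fix that does not work. For \eqref{bilin5 improv} you place the low-frequency factor $f$ in $L_\theta^{\infty,2}$ (local smoothing) and the high-frequency factor $g$ in $L_\theta^{2,\infty}$ (maximal function), getting the wrong sign $2^{(k_2-k_1)/2}$, and then invent a ``transverse-slab summing'' argument to repair it. Two problems: first, the $F_{k_1}^0(T)$ norm contains \emph{no} $L_\theta^{\infty,2}$ component—only the maximal-function piece $2^{-k/2}\lVert\cdot\rVert_{L_{\theta,W_{k+40}}^{2,\infty}}$ appears in $F_k^0$, whereas $L_{\theta,\lambda}^{\infty,2}$ is one of the \emph{extra} pieces that distinguishes $G_k$ from $F_k^0$—so the bound $\lVert P_{k_1}f\rVert_{L_\theta^{\infty,2}}\lesssim 2^{-k_1/2}\lVert f\rVert_{F_{k_1}(T)}$ you claim is simply unavailable. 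Second, the slab argument as described does not produce the missing factor: $f$ is frequency-localized at scale $2^{k_1}$, so its $\theta$-transverse support is a ball of that size, not $\sim 2^{k_2-k_1}$ slabs, and the local-smoothing constant for $f$ is dictated by $f$'s own frequency $2^{k_1}$, not by $g$'s.

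The actual mechanism is simpler and is precisely why the lemma hypothesizes $f\in F_{k_1}$ but $g\in G_{k_2}$: put the \emph{high}-frequency factor $g$ in local smoothing, $\lVert P_{j,\theta}g\rVert_{L_{\theta,\lambda}^{\infty,2}}\lesssim 2^{-k_2/2}\lVert g\rVert_{G_{k_2}(T)}$, and the \emph{low}-frequency factor $f$ in the maximal-function norm, $\lVert f\rVert_{L_{\theta,W_{k_1+40}}^{2,\infty}}\lesssim 2^{k_1/2}\lVert f\rVert_{F_{k_1}(T)}$. Then H\"older $L_\theta^{2,\infty}\cdot L_\theta^{\infty,2}\hookrightarrow L_\theta^{2,2}=L^2_{t,x}$ yields $2^{(k_1-k_2)/2}=2^{-|k_1-k_2|/2}$ directly, with no orthogonality argument beyond the standard $O(1)$ decomposition over angles $\theta$ and Galilean parameters. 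For \eqref{bilin2 improv}, note $g$ does not appear at all (the factors are $h\in L^2_{t,x}$ and $f\in F_{k_1}$ with $k_1\le k-80$, so $f$ is the low-frequency one, not ``at frequency $\sim 2^k$''); here one uses the $2^{-k/2}\lVert\cdot\rVert_{L_{\theta,W_{k-40}}^{1,2}}$ component of $N_k(T)$, with H\"older $L_\theta^{2,2}\cdot L_\theta^{2,\infty}\hookrightarrow L_\theta^{1,2}$ putting $h$ in $L^2$ and the low-frequency $f$ in the maximal-function norm $2^{k_1/2}\lVert f\rVert_{F_{k_1}(T)}$, giving $2^{-(k-k_1)/2}$. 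The paper proves neither estimate, citing \cite[\S3]{BeIoKeTa11}, but that reference follows exactly this role assignment.
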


\subsection{Trilinear Estimates and Summation}

We combine the bilinear estimates to
establish some trilinear estimates.
As we do not control local smoothing norms along the heat flow,
we will oftentimes be able to put only one term in a $G_k$
space. Nonetheless, such estimates still exhibit good off-diagonal
decay.

Define the sets $Z_1(k), Z_2(k), Z_3(k) \subset \mathbb{Z}^3$ as follows:
\begin{equation}
\begin{split}
Z_1(k) := 
&\{ (k_1, k_2, k_3) \in \mathbb{Z}^3 :
k_1, k_2 \leq k - 40 \; \mathrm{ and } \;  \lvert k_3 - k \rvert \leq 4 \} 
\\
Z_2(k) :=
&\{ (k_1, k_2, k_3) \in \mathbb{Z}^3 :
k, k_3 \leq k_1 - 40 \; \mathrm{ and } \;  \lvert k_2 - k_1 \rvert \leq 45\}
\\
Z_3(k) :=
&\{ (k_1, k_2, k_3) \in \mathbb{Z}^3 :
k_3 \leq k \; \mathrm{ and } \; \lvert k - \max\{k_1, k_2\} \rvert \leq 40 \\
& \; \mathbf{or} \;
k_3 > k \; \mathrm{and} \; \lvert k_3 - \max\{k_1, k_2\} \rvert \leq 40 \}
\end{split}
\label{Z def}
\end{equation}
In our main trilinear estimate, we avoid using local smoothing / maximal function
spaces. The following is proven in \cite[Lemma 3.10]{Sm10}.
\begin{lem}[Main trilinear estimate]
Let $C_{k,k_1,k_2,k_3}$ denote the best constant $C$ in the estimate
\begin{equation}
\lVert P_k \left( P_{k_1} f_1 P_{k_2} f_2 P_{k_3} g \right) \rVert_{N_k(T)} \lesssim
C
\lVert P_{k_1} f_1 \rVert_{F_{k_1}(T)}
\lVert P_{k_2} f_2 \rVert_{F_{k_2}(T)}
\lVert P_{k_3} g \rVert_{G_{k_3}(T)}
\label{trilinFFG}
\end{equation}
Then $C_{k,k_1,k_2,k_3}$ satisfies the bounds
\begin{displaymath}
C_{k,k_1,k_2,k_3} \lesssim
\left\{ \begin{array}{ll}
2^{ - \lvert (k_1 + k_2)/6 - k/3 \rvert} & (k_1, k_2, k_3) \in Z_1(k) \\
2^{- \lvert k - k_3 \rvert / 6} & (k_1, k_2, k_3) \in Z_2(k) \\
2^{- \lvert \Delta k \rvert / 6} & (k_1, k_2, k_3) \in Z_3(k) \\
0 & (k_1, k_2, k_3) \in \mathbb{Z}^3 \setminus \{ Z_1(k) \cup Z_2(k) \cup Z_3(k) \}
\end{array} \right.
\end{displaymath}
where $\Delta k = \max \{ k, k_1, k_2, k_3 \} - \min \{ k, k_1, k_2, k_3 \} \geq 0$.
\label{L:Trilin1}
\end{lem}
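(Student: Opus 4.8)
\emph{Proof proposal.} The plan is to deduce \eqref{trilinFFG} from the bilinear estimates of \S\ref{S:Function spaces} by the standard device of contracting two of the three inputs, bounding their product in $L^2_{t,x}$, and then absorbing the third input through a bilinear estimate into $N_k(T)$; the substance of the lemma is the bookkeeping that shows this scheme produces the asserted off-diagonal gains. I would begin with the vanishing statement: by Littlewood--Paley trichotomy $P_k(P_{k_1}f_1\,P_{k_2}f_2\,P_{k_3}g)$ can be nonzero only when the two largest of $k,k_1,k_2,k_3$ agree up to an additive $O(1)$, and a short inspection of \eqref{Z def} shows that every such configuration lies in $Z_1(k)\cup Z_2(k)\cup Z_3(k)$; hence $C_{k,k_1,k_2,k_3}=0$ off this union.

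In the low $\times$ low $\times$ high regime $Z_1(k)$ I would contract the high factor $g$ with one low factor, say $f_2$, bounding $\lVert P_{k_2}f_2\,P_{k_3}g\rVert_{L^2_{t,x}}$ by \eqref{bilin5} for a gain $2^{-|k_2-k_3|/6}$; the product sits at frequency $\sim 2^k$, so combining it with $P_{k_1}f_1$ via \eqref{bilin2} (or \eqref{bilin1} when $k-80<k_1\le k-40$) contributes a further $2^{-|k-k_1|/6}$, and multiplying the two gains while using $|k_3-k|\le4$ yields $C_{k,k_1,k_2,k_3}\lesssim 2^{-|(k_1+k_2)/6-k/3|}$. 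In the regime $Z_2(k)$, where $k_1\approx k_2$ sit at the top and $k,k_3$ far below, I would instead contract the two high inputs $f_1,f_2$ --- which carry only $F$-control --- by \eqref{bilin4}, their product being roughly supported in frequencies $\lesssim 2^{k_1}$, and then absorb $P_{k_3}g$ using \eqref{bilin3} if $k\le k_3-80$, \eqref{bilin1} if $|k-k_3|\le80$, and \eqref{bilin2} if $k\ge k_3+80$ (invoking $G_{k_3}(T)\hookrightarrow F_{k_3}(T)$ in the last two cases); each time the net gain is $2^{-|k-k_3|/6}$.

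The regime $Z_3(k)$ is treated in the same spirit: contract the two inputs of largest frequency, bound their $L^2_{t,x}$ product by whichever of \eqref{bilin4}, \eqref{bilin5}, \eqref{bilin5 improv} applies, and then combine with the remaining (lowest-frequency) input using the relevant estimate of Lemma \ref{L:NkBilinear} or Lemma \ref{L:L3Bilinear}; tracking the exponents through the sub-cases of \eqref{Z def} produces the factor $2^{-|\Delta k|/6}$. This last step is the most involved, and is carried out in detail in \cite[Lemma 3.10]{Sm10}, which we follow; its delicacy comes from the asymmetry that $g$ is the single factor enjoying $G_k(T)$-control --- recall that local-smoothing norms are not propagated along the heat flow, so the two remaining factors can only be placed in the larger spaces $F_k(T)$ --- together with the number of sub-configurations to be checked.

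The step I expect to be the main obstacle is precisely this $Z_3$ analysis in its high $\times$ high $\to$ low sub-configurations: there one must extract the sharp off-diagonal gain from a resonant triple interaction in which two of the three factors supply only $F_k(T)$-bounds, so the only freedom lies in the choice of which pair to contract first and which of \eqref{bilin3}, \eqref{bilin2 improv}, \eqref{bilin5 improv} to invoke. Once this is organized, the remaining regimes and sub-cases are routine.
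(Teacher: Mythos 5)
The paper itself does not prove Lemma~\ref{L:Trilin1} but simply cites \cite[Lemma 3.10]{Sm10}, so there is no in-paper argument to compare against line by line; your outline does, however, capture the standard strategy (contract two factors in $L^2_{t,x}$, absorb the third into $N_k(T)$). Your vanishing argument and your treatment of $Z_1(k)$ and $Z_2(k)$ are correct and match the exponents in the statement.

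The gap is in the $Z_3(k)$ regime, where your blanket rule ``contract the two inputs of largest frequency, then combine with the lowest-frequency input'' does not produce the claimed gain $2^{-|\Delta k|/6}$ in all sub-configurations. Take the sub-case of $Z_3(k)$ with $k_3 > k$, $|k_3 - k_1| \leq 40$ (so $k_1$, $k_3$ are the two top input frequencies), and $k_2 \ll k$. Then $\Delta k = k_3 - k_2$. Your recipe contracts $P_{k_1}f_1$ and $P_{k_3}g$: since $|k_1 - k_3| \leq 40$, \eqref{bilin4} or \eqref{bilin5} returns only an $O(1)$ factor, and combining the $L^2$ product with $P_{k_2}f_2$ via \eqref{bilin2} yields at best $2^{-|k - k_2|/6}$, which is strictly weaker than $2^{-(k_3-k_2)/6} = 2^{-\Delta k/6}$ because $k < k_3$. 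The correct move in this configuration is the opposite pairing: contract the \emph{lowest}-frequency factor $P_{k_2}f_2$ with the single $G$-factor $P_{k_3}g$ via \eqref{bilin5}, which is exactly where the full off-diagonal gain $2^{-(k_3-k_2)/6}$ lives, and then absorb the remaining $P_{k_1}f_1$ with the unrestricted $L^{4/3}$-type bound $\lVert P_k(h\,f_1)\rVert_{L^{4/3}_{t,x}} \leq \lVert h \rVert_{L^2_{t,x}}\lVert f_1\rVert_{L^4_{t,x}} \lesssim \lVert h\rVert_{L^2_{t,x}}\lVert f_1\rVert_{F_{k_1}(T)}$, which holds with no restriction on $k_1$. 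A similar issue arises when $k = \min$: if $k_2$ lies below $k_1\approx k_3$ you must contract the two $F$-factors $f_1$, $f_2$ and reserve \eqref{bilin3} for $g$ to harvest $2^{-|k-k_3|/6}$, even though $g$ may be the highest-frequency input. So the $Z_3$ analysis is not simply ``contract the two highest''; which pair you contract depends on which factor carries the $G$-norm and on where the minimum sits, and getting this wrong loses the stated gain. You flagged $Z_3$ as the delicate step and deferred to \cite[Lemma 3.10]{Sm10}, which is fair, but as written the sketched mechanism would not reproduce the lemma's $Z_3$ bound.
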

The following two corollaries are \cite[Corollary 3.11]{Sm10} and \cite[Corollary 3.12]{Sm10},
respectively.
\begin{cor}
Let $\{ a_k \}$, $\{ b_k \}$, $\{ c_k \}$ be $\delta$-frequency envelopes. 
Let $C_{k, k_1, k_2, k_3}$ be as in Lemma \ref{L:Trilin1}.
Then
\begin{equation*}
\sum_{(k_1, k_2, k_3) \in \mathbb{Z}^3 \setminus Z_2(k)} 
C_{k, k_1, k_2, k_3} a_{k_1} b_{k_2} c_{k_3}
\lesssim
a_k b_k c_k
\end{equation*}
\label{C:MTE1}
\end{cor}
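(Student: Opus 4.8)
The plan is to reduce Corollary~\ref{C:MTE1} to a three-index sum over the dyadic constants $C_{k,k_1,k_2,k_3}$ of Lemma~\ref{L:Trilin1} by splitting the region $\mathbb{Z}^3 \setminus Z_2(k)$ into the two pieces $Z_1(k)$ and $Z_3(k)$ (the complement of $Z_1 \cup Z_2 \cup Z_3$ contributes nothing since $C_{k,k_1,k_2,k_3} = 0$ there) and estimating each piece separately, using throughout the slowly-varying property \eqref{Slowly Varying} of the envelopes to replace $a_{k_1} b_{k_2} c_{k_3}$ by $a_k b_k c_k$ up to harmless factors $2^{\delta(\lvert k - k_1\rvert + \lvert k - k_2\rvert + \lvert k - k_3\rvert)}$.

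First, for $(k_1,k_2,k_3) \in Z_1(k)$ I would use $\lvert k_3 - k\rvert \le 4$, so $c_{k_3} \lesssim c_k$, and $k_1, k_2 \le k - 40$, so $a_{k_1} \lesssim a_k 2^{\delta(k-k_1)}$ and $b_{k_2} \lesssim b_k 2^{\delta(k - k_2)}$. The decay factor is $C_{k,k_1,k_2,k_3} \lesssim 2^{-\lvert (k_1+k_2)/6 - k/3\rvert}$; since $k_1, k_2 \le k-40$ this exponent is $-(k/3 - (k_1+k_2)/6) = -(k-k_1)/6 - (k-k_2)/6$, which dominates the growth $2^{\delta(k-k_1) + \delta(k-k_2)}$ provided $\delta < 1/6$ (which we may assume, as $\delta$ is a free small parameter). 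Summing the geometric series in $k-k_1 \ge 40$ and $k - k_2 \ge 40$ independently gives $O(1)$, yielding the bound $a_k b_k c_k$ for this region.

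Second, for $(k_1,k_2,k_3) \in Z_3(k)$ there are two subcases. In the subcase $k_3 \le k$ with $\lvert k - \max\{k_1,k_2\}\rvert \le 40$, one of $k_1,k_2$ is within $40$ of $k$; say it is $k_1$, so $a_{k_1} \lesssim a_k$, while $k_2 \le k + 40$ forces $b_{k_2} \lesssim b_k 2^{\delta\lvert k - k_2\rvert}$ and similarly $c_{k_3} \lesssim c_k 2^{\delta(k-k_3)}$. Here $\Delta k = k - \min\{k_1,k_2,k_3\}$ (up to $O(1)$), and the decay $2^{-\lvert \Delta k\rvert/6}$ again beats the envelope growth after one uses the constraint linking $k_2$ and $k_3$ to $\Delta k$; one sums over the free index. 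The subcase $k_3 > k$ is symmetric with the roles of $k$ and $k_3$ partially interchanged, and here the factor $c_{k_3} \lesssim c_k 2^{\delta(k_3 - k)}$ is what must be absorbed, again doable since $2^{-(k_3-k)/6}$ wins. In all cases the geometric sums close and produce $a_k b_k c_k$.

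The main obstacle is bookkeeping rather than a genuine difficulty: one must carefully verify in the $Z_3$ region that the exponent $\lvert \Delta k \rvert$ really controls the \emph{sum} of all three envelope-growth exponents $\delta(\lvert k - k_1\rvert + \lvert k - k_2\rvert + \lvert k - k_3\rvert)$ simultaneously, not just one of them, and that after fixing the two "pinned" indices there remains exactly one index free to sum over with net exponential decay. This is where choosing $\delta$ small (say $\delta < 1/100$) and exploiting the $\max$-structure in the definition of $Z_3(k)$ is essential. Since this is precisely \cite[Corollary~3.11]{Sm10}, I would present the argument compactly, organizing it as the two-region split above and deferring the elementary geometric summation to a single line in each case.
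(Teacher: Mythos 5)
Your proposal is correct and follows the standard dyadic-summation argument. The paper itself offers no proof, deferring to \cite[Corollary~3.11]{Sm10}, and the route you take---splitting $\mathbb{Z}^3 \setminus Z_2(k)$ into $Z_1(k)$ and $Z_3(k)$, using $|k-k_i| \leq \Delta k$ to absorb the envelope growth $2^{\delta(|k-k_1|+|k-k_2|+|k-k_3|)}$ into the decay $2^{-|\Delta k|/6}$ for $\delta$ sufficiently small, and summing the resulting geometric series---is exactly the expected one.
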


\begin{cor}
Let $\{a_k \}, \{ b_k \}$ be $\delta$-frequency envelopes. 
Let $C_{k, k_1, k_2, k_3}$ be as in Lemma \ref{L:Trilin1}.
Then
\begin{equation*}
\sum_{(k_1, k_2, k_3) \in Z_2(k) \cup Z_3(k)}
2^{\max\{k, k_3\} - \max\{k_1, k_2\}}
C_{k, k_1, k_2, k_3} a_{k_1} b_{k_2} c_{k_3}
\lesssim a_k b_k c_k
\end{equation*}
\label{C:MTE2}
\end{cor}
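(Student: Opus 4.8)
The plan is to split the sum over $Z_2(k)\cup Z_3(k)$ into the two pieces and to exploit the fact that the extra weight $2^{\max\{k,k_3\}-\max\{k_1,k_2\}}$ is never large on this set. On $Z_3(k)$ it is $O(1)$: by the definition in (\ref{Z def}), $\max\{k,k_3\}$ equals $k$ (resp. $k_3$) in the first (resp. second) alternative, and in either case $\max\{k_1,k_2\}$ lies within $40$ of it, so $2^{\max\{k,k_3\}-\max\{k_1,k_2\}}\lesssim 1$. On $Z_2(k)$ it is in fact a gain: there $k,k_3\le k_1-40$ while $\max\{k_1,k_2\}\ge k_1$, hence $2^{\max\{k,k_3\}-\max\{k_1,k_2\}}\le 2^{\max\{k,k_3\}-k_1}\le 2^{-40}$. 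Thus it suffices to prove $\sum_{Z_3(k)}C_{k,k_1,k_2,k_3}\,a_{k_1}b_{k_2}c_{k_3}\lesssim a_kb_kc_k$ and, separately, that the $Z_2(k)$ part with the extra factor $2^{\max\{k,k_3\}-k_1}$ inserted is $\lesssim a_kb_kc_k$.

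For the $Z_3(k)$ contribution I would argue directly. Use $C_{k,k_1,k_2,k_3}\lesssim 2^{-\lvert\Delta k\rvert/6}$ from Lemma \ref{L:Trilin1}; on $Z_3(k)$ two of the four frequencies $k,k_1,k_2,k_3$ coincide up to $O(1)$ and are maximal, so $2^{-\lvert\Delta k\rvert/6}$ supplies exponential decay in the gap from the top frequency down to the smallest of the lower ones. Splitting according to which of $k,k_3$ is the larger and which of $k_1,k_2$ is the near-maximal one, and transferring envelope values across frequencies via (\ref{Slowly Varying}), the sum collapses to $a_kb_kc_k$ by repeated use of (\ref{Sum 1})--(\ref{Sum 2}), provided $\delta$ is small enough that the exponents $1/6$ stay bounded away from the accumulated $2\delta$ or $3\delta$ losses. (This is of course consistent with, and overlaps, the content of Corollary \ref{C:MTE1}, since $Z_3(k)$ lies in $\mathbb{Z}^3\setminus Z_2(k)$ except on a lower-dimensional slice on which the same $2^{-\lvert\Delta k\rvert/6}$ bound makes the sum trivially convergent.)

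For the $Z_2(k)$ contribution, I would first use $\lvert k_2-k_1\rvert\le 45$ and (\ref{Slowly Varying}) to replace $b_{k_2}$ by $b_{k_1}$ up to an $O(1)$ factor, then bound $a_{k_1}b_{k_1}\le a_kb_k\,2^{2\delta(k_1-k)}$ (legitimate since $k_1\ge k+40$). Together with $C_{k,k_1,k_2,k_3}\lesssim 2^{-\lvert k-k_3\rvert/6}$ and the weight bound $2^{\max\{k,k_3\}-k_1}$, the $Z_2(k)$ part is controlled by
\[
a_kb_k\sum_{k_1\ge k+40}2^{2\delta(k_1-k)}\sum_{k_3\le k_1-40}2^{\max\{k,k_3\}-k_1}\,2^{-\lvert k-k_3\rvert/6}\,c_{k_3}.
\]
I would then split the inner sum at $k_3=k$. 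For $k_3\le k$ it equals $2^{k-k_1}\sum_{k_3\le k}2^{-(k-k_3)/6}c_{k_3}\lesssim 2^{k-k_1}c_k$ by (\ref{Sum 1}); for $k<k_3\le k_1-40$, after using $c_{k_3}\le c_k2^{\delta(k_3-k)}$ the summand $2^{(k_3-k)-(k_1-k)}2^{-(1/6-\delta)(k_3-k)}$ is geometric in $k_3-k$ with ratio $2^{5/6+\delta}>1$ over at most $k_1-k-40$ terms, hence this inner sum is $\lesssim c_k2^{-(1/6-\delta)(k_1-k)}$. In either case the outer $k_1$-sum is $\sum_{k_1\ge k+40}2^{2\delta(k_1-k)}2^{-c(k_1-k)}$ with $c\in\{1,\,1/6-\delta\}$, which is $O(1)$ once $\delta<1/18$, say. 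This gives the desired $\lesssim a_kb_kc_k$ and finishes the argument.

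The main obstacle is purely bookkeeping: one must check that every exponential gain in play — the $2^{-\lvert\Delta k\rvert/6}$ and $2^{-\lvert k-k_3\rvert/6}$ from Lemma \ref{L:Trilin1}, and the genuine gain $2^{\max\{k,k_3\}-k_1}$ on $Z_2(k)$ — strictly dominates the $2^{\delta\lvert\cdot\rvert}$ losses incurred each time the slowly varying property is invoked, and, crucially, that the unbounded range $k_1\ge k+40$ in $Z_2(k)$ is tamed: this is exactly where the weight $2^{\max\{k,k_3\}-k_1}$ (rather than the mere bound $2^{-40}$) is needed. All of this is guaranteed by fixing $\delta$ small, which the paper already does, so the proof is routine given Lemma \ref{L:Trilin1}, Corollary \ref{C:MTE1}, and the summation rules (\ref{Sum 1})--(\ref{Sum 2}).
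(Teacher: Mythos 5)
The proposal is correct, and since the paper defers the proof to \cite{Sm10} (Corollary~3.12) there is nothing here to compare it against directly; but the argument is sound and fills the gap the paper leaves implicit.

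You correctly identify the two distinct roles the weight $2^{\max\{k,k_3\}-\max\{k_1,k_2\}}$ plays: on $Z_3(k)$ it is merely $O(1)$ (because $\max\{k,k_3\}$ is within $40$ of $\max\{k_1,k_2\}$ in both alternatives of the definition), so that piece reduces to the same $2^{-|\Delta k|/6}$ summation already handled by Corollary~\ref{C:MTE1}; on $Z_2(k)$ the weight supplies a genuine exponential decay $2^{\max\{k,k_3\}-k_1}$ in $k_1-k$, and this is exactly what is needed, since $C_{k,k_1,k_2,k_3}\lesssim 2^{-|k-k_3|/6}$ on $Z_2(k)$ gives no decay at all in $k_1$ while the envelope transfers cost $2^{2\delta(k_1-k)}$. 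Without the weight the $k_1$-sum would diverge — precisely why $Z_2(k)$ was excluded from Corollary~\ref{C:MTE1}. Your two-case split of the inner $k_3$-sum at $k_3=k$, the geometric-sum estimate on $k<k_3\le k_1-40$, and the resulting outer decay $2^{-c(k_1-k)}$ with $c\in\{1,\,1/6-\delta\}$ are all correct, and the constraint $\delta<1/18$ you extract is the right one (it is also what the $Z_3$-side $2^{-|\Delta k|/6}$ versus $2^{3\delta|\Delta k|}$ comparison requires). One minor point worth flagging: the paper's statement of the corollary lists only $\{a_k\},\{b_k\}$ as envelopes but uses $\{c_k\}$ as well; you treat $\{c_k\}$ as an envelope, which is clearly what is intended.
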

If we take advantage of the local smoothing/maximal function spaces,
then we can obtain the following improvement (\cite[Lemma 3.13]{Sm10}).
\begin{lem}[Main trilinear estimate improvement over $Z_1$]
The best constant $C_{k,k_1,k_2,k_3}$ in $(\ref{trilinFFG})$
satisfies the improved estimate
\begin{equation}
C_{k,k_1,k_2,k_3}
\lesssim
2^{ - \lvert (k_1 + k_2)/2 - k \rvert}
\end{equation}
when $\{ k_1, k_2, k_3 \} \in Z_1(k)$.
\end{lem}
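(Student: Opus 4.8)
The plan is to organize the trilinear product according to the frequency configuration $Z_1(k)$ defined in (\ref{Z def}): there both $P_{k_1}f_1$ and $P_{k_2}f_2$ live at frequencies $\lesssim 2^{k-40}$ while $P_{k_3}g$ lives at frequency $\sim 2^k$. Only one factor, namely $P_{k_3}g$, is placed in a $G$-space, and the improved bilinear bounds of Lemma~\ref{L:L3Bilinear} require a $G$-factor; so I would pair $P_{k_3}g$ with one of the two low-frequency $F$-factors to form an $L^2_{t,x}$ function carrying good off-diagonal decay, and then estimate that $L^2$ function against the remaining $F$-factor in $N_k(T)$. By the symmetry $f_1\leftrightarrow f_2$ it suffices to take $h:=P_{k_2}f_2\cdot P_{k_3}g$ as the paired product.

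Concretely, since $(k_1,k_2,k_3)\in Z_1(k)$ forces $k_2\le k-40$ and $|k_3-k|\le 4$, in particular $k_2\le k_3$, estimate (\ref{bilin5 improv}) gives
\[
\|h\|_{L_{t,x}^2}\lesssim 2^{-|k_2-k_3|/2}\,\|P_{k_2}f_2\|_{F_{k_2}(T)}\,\|P_{k_3}g\|_{G_{k_3}(T)},
\qquad |k_2-k_3|=(k-k_2)+O(1).
\]
Next I would estimate $P_k(h\cdot P_{k_1}f_1)$ in $N_k(T)$: if $k_1\le k-80$ this is handled by (\ref{bilin2 improv}), giving a factor $2^{-|k-k_1|/2}$; in the remaining band $k-80<k_1\le k-40$ one has $|k_1-k|\le 80$, so (\ref{bilin1}) applies and gives the same bound with $2^{-|k-k_1|/2}$ replaced by $1$, which costs at most the fixed factor $2^{40}$. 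Multiplying the two steps together,
\[
C_{k,k_1,k_2,k_3}\lesssim 2^{-|k-k_1|/2}\,2^{-(k-k_2)/2}=2^{-(k-(k_1+k_2)/2)},
\]
and since $k_1,k_2\le k-40$ we have $(k_1+k_2)/2<k$, so $k-(k_1+k_2)/2=|(k_1+k_2)/2-k|$, which is the asserted bound.

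I do not expect a genuine obstacle: the whole content sits in Lemma~\ref{L:L3Bilinear}, and the gain over the $Z_1$ case of Lemma~\ref{L:Trilin1} is precisely the replacement of the two $1/6$-power bilinear bounds used there by the $1/2$-power bounds (\ref{bilin5 improv}) and (\ref{bilin2 improv}). The only bookkeeping points are the use of the $f_1\leftrightarrow f_2$ symmetry to keep the pairing of the form ``$G$-factor against an $F$-factor'', and absorbing the $O(1)$ loss in the narrow band $k-80<k_1\le k-40$ where (\ref{bilin2 improv}) is unavailable and one falls back on (\ref{bilin1}).
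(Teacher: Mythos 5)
Your argument is correct and is essentially the paper's own proof (carried out in \cite{Sm10}): for $(k_1,k_2,k_3)\in Z_1(k)$ you pair the $G$-factor $P_{k_3}g$ with one low-frequency $F$-factor in $L^2_{t,x}$ and then multiply against the remaining $F$-factor, exactly as in the $Z_1$ case of Lemma~\ref{L:Trilin1}, except with the $1/6$-power bilinear bounds (\ref{bilin5}), (\ref{bilin2}) upgraded to the $1/2$-power local-smoothing/maximal-function bounds (\ref{bilin5 improv}), (\ref{bilin2 improv}). The bookkeeping — $|k_2-k_3|=(k-k_2)+O(1)$, absorbing the $O(1)$ loss on the band $k-80<k_1\le k-40$ via (\ref{bilin1}), and $k-(k_1+k_2)/2=|(k_1+k_2)/2-k|$ since $k_1,k_2\le k-40$ — is all in order.
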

There are certain situations, such as when bounding the cubic-type term,
where we can place each term in a $G_k$ space, in which case we get
better estimates.
\begin{lem}[Improved trilinear estimate]
Let $C_{k,k_1,k_2,k_3}$ denote the best constant $C$ in the estimate
\begin{equation}
\lVert P_k \left( P_{k_1} g_1 P_{k_2} g_2 P_{k_3} g_3 \right) \rVert_{N_k(T)} \lesssim
C
\lVert P_{k_1} g_1 \rVert_{G_{k_1}(T)}
\lVert P_{k_2} g_2 \rVert_{G_{k_2}(T)}
\lVert P_{k_3} g_3 \rVert_{G_{k_3}(T)}
\label{trilinGGG}
\end{equation}
Then $C_{k,k_1,k_2,k_3}$ satisfies the bounds
\begin{displaymath}
C_{k,k_1,k_2,k_3} \lesssim
\left\{ \begin{array}{ll}
2^{ - \lvert (k_1 + k_2)/2 - k \rvert} & (k_1, k_2, k_3) \in Z_1(k) \\
2^{- \lvert k - k_1 \rvert / 6} 
2^{- \lvert k_3 - k_2 \rvert / 6} & (k_1, k_2, k_3) \in Z_2(k) \\
2^{- \lvert \Delta k \rvert / 6} & (k_1, k_2, k_3) \in Z_3(k) \\
0 & (k_1, k_2, k_3) \in \mathbb{Z}^3 \setminus \{ Z_1(k) \cup Z_2(k) \cup Z_3(k) \}
\end{array} \right.
\end{displaymath}
where $\Delta k = \max \{ k, k_1, k_2, k_3 \} - \min \{ k, k_1, k_2, k_3 \} \geq 0$.
\label{L:GGGTrilin}
\end{lem}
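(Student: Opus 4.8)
The plan is to dispose of the three frequency regimes $Z_1(k)$, $Z_2(k)$, $Z_3(k)$ separately, exploiting throughout the continuous embedding $G_j(T) \hookrightarrow F_j(T)$ with $\lVert \cdot \rVert_{F_j(T)} \lesssim \lVert \cdot \rVert_{G_j(T)}$ (take the trivial decomposition $J = 1$, $m_1 = 0$ in the definition of $F_j$). Because of this embedding, every estimate already established for the mixed form \eqref{trilinFFG} transfers verbatim to \eqref{trilinGGG} with the same constant. In particular, the vanishing of $C_{k,k_1,k_2,k_3}$ outside $Z_1(k)\cup Z_2(k)\cup Z_3(k)$ is purely a matter of frequency support and is inherited from Lemma~\ref{L:Trilin1}; on $Z_3(k)$ the bound $2^{-\lvert \Delta k\rvert/6}$ is likewise inherited from Lemma~\ref{L:Trilin1} (choose the $G$-factor in \eqref{trilinFFG} to be $P_{k_3}g_3$ and estimate $P_{k_1}g_1$, $P_{k_2}g_2$ in $F$-norm); and on $Z_1(k)$ the bound $2^{-\lvert (k_1+k_2)/2-k\rvert}$ is inherited from the refinement of Lemma~\ref{L:Trilin1} over $Z_1(k)$ stated immediately above. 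So in these three cases there is nothing new to prove.

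The genuinely new content is the $Z_2(k)$ bound $2^{-\lvert k-k_1\rvert/6}\,2^{-\lvert k_3-k_2\rvert/6}$, and here one really does use that all three factors lie in $G$-spaces. On $Z_2(k)$ we have $k, k_3 \le k_1 - 40$ and $\lvert k_2 - k_1\rvert \le 45$, so $k_1$ and $k_2$ are two comparable high frequencies while the output frequency $k$ and the third input frequency $k_3$ are both low. The idea is to factor the trilinear form through a bilinear $L^2_{t,x}$ piece and a bilinear $N_k$ piece. First pair the low factor $P_{k_3}g_3$ with one of the high factors, say $P_{k_2}g_2$: when $k_3 \le k_2$, estimate \eqref{bilin5 improv} (or \eqref{bilin5}) yields
\[
\lVert (P_{k_2}g_2)(P_{k_3}g_3)\rVert_{L^2_{t,x}} \lesssim 2^{-\lvert k_3-k_2\rvert/6}\,\lVert P_{k_2}g_2\rVert_{G_{k_2}(T)}\,\lVert P_{k_3}g_3\rVert_{G_{k_3}(T)},
\]
while if $k_2 < k_3 \le k_1 - 40$ one has $\lvert k_3 - k_2\rvert \le 5$, so the factor $2^{-\lvert k_3-k_2\rvert/6}$ is a harmless constant and one uses \eqref{bilin4} instead. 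Next treat $h := (P_{k_2}g_2)(P_{k_3}g_3) \in L^2_{t,x}$ as a single factor and estimate $P_k\big(h\cdot P_{k_1}g_1\big)$: since the output frequency $k$ is low relative to $k_1$, \eqref{bilin3} applies when $k \le k_1 - 80$ and gives the gain $2^{-\lvert k-k_1\rvert/6}$, while in the remaining $O(1)$ range $k_1 - 80 < k \le k_1 - 40$ the factor $2^{-\lvert k-k_1\rvert/6}$ is again harmless and \eqref{bilin1} applies. Combining the two steps gives the claimed $Z_2(k)$ bound (the apparent asymmetry between $k_1$ and $k_2$ in the stated constant is immaterial since $\lvert k_1 - k_2\rvert \le 45$ on $Z_2(k)$).

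The main obstacle is precisely this $Z_2(k)$ case, and within it the point is to extract \emph{both} decay factors simultaneously: $2^{-\lvert k_3-k_2\rvert/6}$ comes from the bilinear $L^2_{t,x}$ gain between a high and a low frequency, and $2^{-\lvert k-k_1\rvert/6}$ from the high-to-low frequency gain in the $N_k$ bilinear estimate. Two pieces of bookkeeping need attention. First, one must verify the frequency geometry: the intermediate function $h$ has spectrum in a ball of radius $\sim 2^{\max\{k_2,k_3\}}\sim 2^{k_1}$, and one needs that its interaction with $P_{k_1}g_1$ can produce output at frequency $\sim 2^k$ — this is exactly what $Z_2(k)$ encodes, and off $Z_1(k)\cup Z_2(k)\cup Z_3(k)$ the product would simply vanish. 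Second, one must absorb the mismatch between the $40$-thresholds appearing in the definition of $Z_2(k)$ and the $80$-thresholds in the hypotheses of \eqref{bilin3} and \eqref{bilin5}; this costs only a fixed multiplicative constant $2^{O(1)}$ and is handled by falling back on \eqref{bilin1} and \eqref{bilin4} in the excluded $O(1)$ windows, as indicated above. Everything else is Hölder's inequality together with the already-established bilinear bounds.
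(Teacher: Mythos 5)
Your proposal is correct and follows essentially the same route as the paper: inherit the $Z_1$, $Z_3$, and off-support bounds from Lemma~\ref{L:Trilin1} and its $Z_1$-refinement, and on $Z_2(k)$ pair the two low/high factors via \eqref{bilin5} in $L^2_{t,x}$ and then invoke \eqref{bilin3} to pull out the $2^{-\lvert k-k_1\rvert/6}$ gain against the remaining $G_{k_1}$ factor. (The paper's displayed intermediate estimate appears to mislabel which two factors form the $L^2_{t,x}$ product, but your grouping of $P_{k_2}g_2\,P_{k_3}g_3$ is the one consistent with the stated gain and with the application of \eqref{bilin5}; your explicit handling of the $O(1)$ windows where \eqref{bilin1} and \eqref{bilin4} replace \eqref{bilin3} and \eqref{bilin5} is a bit more careful than the paper's telegraphic argument.)
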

\begin{proof}
We seek an improvement over Lemma \ref{L:Trilin1} only on the set $Z_2(k)$.
Here we apply (\ref{bilin3}) so that
\begin{equation*}
\lVert P_k(P_{k_1} g_1 P_{k_2} g_2 P_{k_3} g_3) \rVert_{N_k(T)}
\lesssim
2^{-\lvert k - k_1 \rvert/6} \lVert P_{k_1} g_1 P_{k_2} g_2 \rVert_{L^2_{t,x}}
\lVert P_{k_3} g_3 \rVert_{G_{k_3}(T)}
\end{equation*}
We conclude with (\ref{bilin5}).
\end{proof}

\begin{cor}
Let $\{a_k\}$, $\{b_k\}$, $\{c_k\}$ be $\delta$-frequency envelopes. Let
$C_{k,k_1,k_2,k_3}$ be as in Lemma \ref{L:GGGTrilin}. Then
\begin{equation*}
\sum_{(k_1, k_2, k_3) \in \mathbb{Z}^3}
C_{k,k_1,k_2,k_3} \lesssim a_k b_k c_k
\end{equation*}
\label{C:GGG}
\end{cor}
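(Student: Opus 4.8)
The statement to prove is Corollary \ref{C:GGG}: that the sum over all of $\mathbb{Z}^3$ of $C_{k,k_1,k_2,k_3} a_{k_1} b_{k_2} c_{k_3}$ is $\lesssim a_k b_k c_k$, where $C_{k,k_1,k_2,k_3}$ is the constant from Lemma \ref{L:GGGTrilin} (note: the statement as printed is missing the factor $a_{k_1}b_{k_2}c_{k_3}$ in the sum, which is clearly a typo). The natural approach is to split the sum according to the three regions $Z_1(k)$, $Z_2(k)$, $Z_3(k)$, on which $C_{k,k_1,k_2,k_3}$ vanishes outside their union. On $Z_1(k)$ and $Z_3(k)$ the constant is identical to the one in Lemma \ref{L:Trilin1}, so those two pieces are already controlled by Corollary \ref{C:MTE1} (which handles everything outside $Z_2(k)$, in particular $Z_1(k)\cup Z_3(k)$). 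Thus the only real work is the sum over $Z_2(k)$, where Lemma \ref{L:GGGTrilin} supplies the \emph{improved} bound $C_{k,k_1,k_2,k_3}\lesssim 2^{-|k-k_1|/6}2^{-|k_3-k_2|/6}$ in place of the weaker $2^{-|k-k_3|/6}$ from Lemma \ref{L:Trilin1}.

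\textbf{Handling the $Z_2(k)$ sum.} On $Z_2(k)$ one has $k, k_3 \leq k_1 - 40$ and $|k_2 - k_1|\leq 45$. I would estimate
\[
\sum_{(k_1,k_2,k_3)\in Z_2(k)} 2^{-|k-k_1|/6} 2^{-|k_3-k_2|/6} a_{k_1} b_{k_2} c_{k_3}.
\]
First sum in $k_3$: since $c$ is a frequency envelope and $k_3$ ranges freely (subject to $k_3 \leq k_1-40$), the geometric factor $2^{-|k_3-k_2|/6}$ together with the slowly-varying property \eqref{Slowly Varying} and the summation rule \eqref{Sum 1}–\eqref{Sum 2} gives $\sum_{k_3} 2^{-|k_3-k_2|/6} c_{k_3} \lesssim c_{k_2}$ (taking $1/6 > \delta$, which holds since $\delta$ is a small fixed constant). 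Next, because $|k_2 - k_1|\leq 45$ is an $O(1)$ constraint, $b_{k_2} c_{k_2} \lesssim b_{k_1} c_{k_1}$ up to a multiplicative constant absorbed by the slowly-varying property, and the sum over $k_2$ in this range contributes only an $O(1)$ factor. We are left with $\sum_{k_1} 2^{-|k-k_1|/6} a_{k_1} b_{k_1} c_{k_1}$. Finally, $\{a_k b_k c_k\}$ is itself slowly varying with parameter $3\delta$ (a product of three frequency envelopes), and again invoking \eqref{Sum 1}–\eqref{Sum 2} with $1/6 > 3\delta$ yields $\sum_{k_1} 2^{-|k-k_1|/6} a_{k_1}b_{k_1}c_{k_1} \lesssim a_k b_k c_k$, as desired.

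\textbf{Assembling the pieces.} Combining the $Z_2(k)$ bound just obtained with the contribution from $\mathbb{Z}^3\setminus Z_2(k)$, which is $\lesssim a_k b_k c_k$ by Corollary \ref{C:MTE1}, finishes the proof. The main — though still routine — obstacle is purely bookkeeping: verifying that the product of three $\delta$-frequency envelopes is a $3\delta$-frequency envelope and that $1/6$ comfortably exceeds $3\delta$ so the summation rules apply without the $(p-\delta)^{-1}$ factors mattering; the paper already remarks after \eqref{Sum 2} that $\delta$ is chosen small enough that $p > 2\delta$ (hence a fortiori here) suffices and that these rules are iterated only $O(1)$ times, so there is no hidden loss. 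No genuinely new estimate is needed beyond Lemma \ref{L:GGGTrilin} and the earlier corollaries; the whole content is that the extra decay $2^{-|k_3-k_2|/6}$ over $Z_2(k)$ is exactly what lets one sum freely in $k_3$ without the weight $2^{\max\{k,k_3\}-\max\{k_1,k_2\}}$ that was needed in Corollary \ref{C:MTE2}.
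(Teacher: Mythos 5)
Your proposal is correct and follows essentially the same route as the paper: reduce to the $Z_2(k)$ sum via Corollary \ref{C:MTE1}, then exploit the improved decay $2^{-|k_3-k_2|/6}$ from Lemma \ref{L:GGGTrilin} together with the slowly-varying property of the envelopes (with $1/6$ comfortably exceeding the relevant multiple of $\delta$) to sum over $k_3$, $k_2$, and $k_1$. The paper collapses $k_2$ to $k_1$ before summing in $k_3$ whereas you sum $k_3$ first, but this is an immaterial reordering; you also correctly note the typo (the missing factor $a_{k_1}b_{k_2}c_{k_3}$) in the displayed sum.
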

\begin{proof}
In view of Corollary (\ref{C:MTE1}) we need only establish the bound on
$Z_2(k)$. We have
\begin{align*}
&\sum_{(k_1, k_2, k_3) \in Z_2(k)} 
2^{- \lvert k - k_1 \rvert / 6} 2^{- \lvert k_3 - k_2 \rvert / 6}
a_{k_1} b_{k_2} c_{k_3} \\
&\lesssim
\sum_{k, k_3 \leq k_1 - 40} 
2^{- \lvert k - k_1 \rvert / 6} 2^{- \lvert k_3 - k_1 \rvert / 6}
a_{k_1} b_{k_1} c_{k_3} \\
&\lesssim
a_k b_k c_k
\sum_{k, k_3 \leq k_1 - 40} 
2^{- \lvert k - k_1 \rvert / 6} 2^{- \lvert k_3 - k_1 \rvert / 6}
2^{3 \delta \lvert k - k_1 \rvert} 2^{\delta \lvert k_3 - k_1 \rvert}
\end{align*}
To finish, sum on $k_3$, then on $k_1$.
\end{proof}

\section{Bounds along the heat flow}

We recall from \cite{BeIoKeTa11, Sm10} 
some estimates in the $F_k$ spaces that propagate along the heat flow.
We assume throughout that $\phi$ has subthreshold energy and
that $\varepsilon > 0$ is a very small number such that
$b_k \leq \varepsilon$ and
$\varepsilon^{1/2} \sum_j b_j^2 \ll 1.$
\begin{lem}
Let $k \in \mathbb{Z}$, $s \geq 0$.
Let
$F \in \{ A_\ell^2, \partial_\ell A_\ell, fg : \ell = 1, 2;
f, g \in \{ \psi_m, \overline{\psi_m}: m = 1,2\} \}$.
Then
\begin{align}
\lVert P_k \psi_m(s) \rVert_{F_k(T)}
&\lesssim
(1 + s 2^{2k})^{-4} 2^{-\sigma k} b_k(\sigma) 
\label{SSV1} \\
\lVert P_k (A_\ell \psi_m(s)) \rVert_{F_k(T)}
&\lesssim
\varepsilon (1 + s 2^{2k})^{-3} (s 2^{2k})^{-3/8} 2^k 2^{-\sigma k} b_k(\sigma)
\label{PAPsi FS}  \\
\lVert P_k F(s) \rVert_{F_k(T)} 
&\lesssim
\varepsilon^{1/2}
(1 + s 2^{2k})^{-2} (s 2^{2k})^{-5/8} 
2^k 2^{-\sigma k} b_k(\sigma) \label{Quadratic Parabolic} \\
\lVert P_k \int_0^s e^{(s - s^\prime) \Delta} U_m(s^\prime) ds^\prime \rVert_{F_k(T)}
&\lesssim \varepsilon (1 + s 2^{2k})^{-4}2^{-\sigma k} b_k(\sigma)
\label{UFk}
\\
\lVert P_k A_m(0) \rVert_{F_k(T)}
&\lesssim
\sum_p b_p^2
\label{PA Sk}
\end{align}
\end{lem}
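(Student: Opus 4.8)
The five bounds are recalled from \cite{BeIoKeTa11, Sm10}, and the natural way to organize the argument is a simultaneous continuity/bootstrap in the heat-time variable $s$. The starting point is the $s=0$ case of \eqref{SSV1}: since $G_k(T)$ embeds in $F_k(T)$ and $\lVert P_k\psi_m(0)\rVert_{G_k(T)} \le 2^{-\sigma k} b_k(\sigma)$ by \eqref{b Envelope}, the bound holds at $s=0$ with constant $1$. For $s>0$ I would invoke the Duhamel formula for the covariant heat flow \eqref{genPsiEQ},
\[
P_k\psi_m(s) = e^{s\Delta}P_k\psi_m(0) + P_k\int_0^s e^{(s-s')\Delta}U_m(s')\ds^\prime ,
\]
and use the translation invariance of the $F_k(T)$ norm, which yields $\lVert e^{s\Delta}h\rVert_{F_k(T)} \lesssim (1+s2^{2k})^{-20}\lVert h\rVert_{F_k(T)}$, to dispose of the free term. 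Thus \eqref{SSV1} follows once \eqref{UFk} is available, and \eqref{UFk} is in turn obtained by expanding $U_m$ via \eqref{Heat Nonlinearity 0} into products of the objects controlled by \eqref{PAPsi FS} and \eqref{Quadratic Parabolic} with a further $\psi$ or $\partial_\ell$, estimating each product by the $F_k(T)$-product bound $\lVert P_k(uv)\rVert_{F_k(T)} \lesssim \lVert u\rVert_{F_{k'}(T)}\lVert v\rVert_{L^\infty_{t,x}}$ together with the bilinear/trilinear $F_k(T)$ estimates of \S\ref{S:Function spaces} (notably \eqref{bilin4}, \eqref{bilin5}, and Lemma \ref{L:Trilin1}), and then carrying out the $s'$-integral against the heat kernel, which supplies both convergence and the claimed $(1+s2^{2k})$-decay.

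For the quadratic pieces I would first control the connection coefficients through the integral representation \eqref{CC Integral Rep}, $A_m(s) = -\int_s^\infty \Im(\overline{\psi_m}\,\psi_s)(s')\ds^\prime$ with $\psi_s = D_\ell\psi_\ell$ by \eqref{Psi_s Equation}; frequency-localizing, applying \eqref{bilin4} (and its off-diagonal refinement) to each dyadic piece, inserting the provisional form of \eqref{SSV1}, and integrating in $s'$ produces a frequency-envelope bound for $P_k A_m(s)$ with the same $2^k2^{-\sigma k}b_k(\sigma)(1+s2^{2k})$-structure as in \eqref{Hard Field Bounds}. Feeding this back, \eqref{PAPsi FS} and \eqref{Quadratic Parabolic} are proved by Littlewood--Paley trichotomy on the product: in high--low and low--high interactions one factor goes in $F_{k'}(T)$ and the other in $L^\infty_{t,x}$ via Bernstein and \eqref{FsoftBound}, while the high--high interactions, where the envelope decay of the individual factors at scales $2^{k_i}\gg 2^k$ is weak, are the only delicate case; there the extra smoothing factor $(s2^{2k})^{-3/8}$, resp.\ $(s2^{2k})^{-5/8}$, is recovered by peeling an $e^{(s-s')\Delta}$ off the highest-frequency factor (or, for $A_m$, by exploiting the additional $\int_s^\infty$ in \eqref{CC Integral Rep}), trading a unit of spatial derivative for a fractional negative power of $s2^{2k}$. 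The overall $2^k$ in these bounds is the parabolic scaling of objects built from $\partial_x\phi$, and the specific exponents ($3,2$ and $3/8,5/8$) are chosen so that each survives one more application of the heat kernel in \eqref{UFk} while leaving room to sum. The version with general $\sigma\in[1,\sigma_1]$ follows verbatim with $b_k$ replaced by $b_k(\sigma)$, exactly as for \eqref{Goal2}.

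Finally, \eqref{PA Sk} is the $s=0$ specialization of the $A_m$ estimate, but without the envelope-decay bookkeeping: from \eqref{CC Integral Rep} at $s=0$ and $\psi_s = \partial_\ell\psi_\ell + iA_\ell\psi_\ell$ one has $P_k A_m(0) = -\int_0^\infty P_k\Im\big(\overline{\psi_m}(\partial_\ell\psi_\ell + iA_\ell\psi_\ell)\big)(s')\ds^\prime$. Bounding the integrand by \eqref{Quadratic Parabolic} with $\sigma=0$ (the $\partial_\ell$ providing the $2^k$) and by \eqref{PAPsi FS}, and computing $\int_0^\infty (1+s'2^{2k})^{-2}(s'2^{2k})^{-5/8}\,2^{2k}\ds^\prime \sim 1$, reduces the claim to frequency-envelope sums over the sets $Z_i(k)$ of \eqref{Z def} of products of two $b$'s; since only a bound by the global quantity is needed, the high--high$\to$low interactions contribute $\sum_p b_p^2$ and the remaining interactions are absorbed by Corollary \ref{C:MTE1}.

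The main obstacle is the self-referential structure: \eqref{SSV1} is needed to bound $A_\ell(s)$ via \eqref{CC Integral Rep}, that bound feeds \eqref{PAPsi FS} and \eqref{Quadratic Parabolic} and hence $U_m$, and the Duhamel integral \eqref{UFk} of $U_m$ feeds back into \eqref{SSV1}. Closing this loop demands a single continuity argument in $s$ in which every nonlinear feedback carries the small constant $\varepsilon$ (from $b_k\le\varepsilon$ and $\varepsilon^{1/2}\sum_j b_j^2\ll 1$), and it demands that the decay-exponent bookkeeping of the preceding paragraphs be globally consistent so that all $s$- and frequency-sums converge; this is precisely where the bulk of the work in \cite[\S3--4]{Sm10} lies, and the present statement is quoted from there.
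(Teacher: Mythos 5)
The paper does not actually prove this lemma; it is recalled verbatim from \cite{BeIoKeTa11, Sm10} with no argument given. Your sketch is therefore compared not against a proof in this document but against what those works (especially \cite[\S\S3--4]{Sm10}) do, and at that level it is a faithful outline: the Duhamel representation of $\psi_m(s)$ via \eqref{genPsiEQ}, the translation-invariance of $F_k(T)$ to handle $e^{s\Delta}$, the integral representation \eqref{CC Integral Rep} for $A_m$, the Littlewood--Paley trichotomy combined with the bilinear $F_k$-estimates, and the $s$-continuity argument to break the circularity among \eqref{SSV1}--\eqref{UFk} are all the correct ingredients, and you correctly flag the circularity as the point where the real work sits.

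Two small imprecisions are worth noting. First, your heuristic ``the specific exponents are chosen so that each survives one more application of the heat kernel'' does not close by naive substitution: feeding $(1+s2^{2k})^{-3}(s2^{2k})^{-3/8}$ from \eqref{PAPsi FS} through $\int_0^s e^{(s-s')\Delta}(\cdot)\,ds'$ yields roughly $(1+s2^{2k})^{-27/8}$, which is weaker than the $(1+s2^{2k})^{-4}$ claimed in \eqref{UFk}, so the cited proof must exploit the much faster decay of the linear-evolution term (or interpolate) rather than simply propagate the quadratic exponents; as a sketch this is fine, but a reader reconstructing the argument would need to address it. Second, for \eqref{PA Sk} you invoke the trilinear sets $Z_i(k)$ and Corollary \ref{C:MTE1}, but at leading order $A_m(0)$ is a bilinear object ($\int_0^\infty\overline{\psi_m}\,\partial_\ell\psi_\ell\,ds'$), and the $\sum_p b_p^2$ on the right-hand side comes from a high--high-into-low bilinear sum rather than from the trilinear machinery; the trilinear sums only enter through the subordinate $A_\ell\psi_\ell$ contribution. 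Neither point undermines the structure of your reconstruction, but both are places where the actual argument in \cite{Sm10} is doing something a bit more delicate than the heuristic suggests.
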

These $F_k$ estimates can then be used to establish various $L^4$ bounds
along the heat flow. In particular, we have \cite[Lemma 7.17]{Sm10}:
\begin{lem}
The following bounds hold
\begin{align}
\lVert P_k \psi_t(s) \rVert_{L^4_{t,x}} +
\lVert P_k \psi_s(s) \rVert_{L_{t,x}^4}
&\lesssim
(1 + s 2^{2k})^{-2} 2^k
(1 + \sum_j b_j^2) b_k
\label{C:Psist L4}
\\
\lVert \int_0^s e^{(s-r)\Delta} P_k U_{\alpha}(r) dr \rVert_{L^4_{t,x}}
&\lesssim
\varepsilon
(1 + s 2^{2k})^{-2} 2^k
(1 + \sum_j b_j^2) b_k
\label{L:L4NonlinearPart}
\end{align}
where $\alpha \in \{0, 3\}$.
\end{lem}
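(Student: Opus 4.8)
The statement to prove consists of the two $L^4$ bounds \eqref{C:Psist L4} and \eqref{L:L4NonlinearPart}, asserting $L^4_{t,x}$ control along the heat flow for the time and heat-time derivative fields $\psi_t, \psi_s$ and for the Duhamel term built from the heat nonlinearity $U_\alpha$.

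\textbf{Plan of proof.} The starting point is the pair of $F_k$-estimates already recorded in the preceding lemma, together with the embedding $F_k(T) \hookrightarrow L^4_{t,x}$ that is built into the definition of the $F_k^0(T)$ norm (the $L^4_{t,x}$ piece appears as a summand). Thus any function bounded in $F_k(T)$ is automatically bounded, with the same constant, in $L^4_{t,x}$. First I would treat \eqref{L:L4NonlinearPart}: by \eqref{UFk} we already have
\[
\Bigl\lVert P_k \int_0^s e^{(s-s')\Delta} U_m(s')\,ds' \Bigr\rVert_{F_k(T)}
\lesssim \varepsilon (1 + s 2^{2k})^{-4} 2^{-\sigma k} b_k(\sigma),
\]
and specializing to $\sigma = 0$, discarding two powers of the heat-decay factor, and invoking $F_k(T) \hookrightarrow L^4_{t,x}$ gives exactly \eqref{L:L4NonlinearPart} for $\alpha = m$ a spatial index. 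For $\alpha = 0 = s$ one uses the analogue of \eqref{UFk} for $U_s$ in place of $U_m$ — this holds because $U_\alpha$ has the same schematic form \eqref{Heat Nonlinearity 0} for every $\alpha$, so the same bilinear/trilinear machinery of \S\ref{S:Function spaces} applies verbatim; the factor $(1 + \sum_j b_j^2)$ is harmless slack since $\sum_j b_j^2 \lesssim \varepsilon^{-1/5}\sum c_j^2$ is finite. For $\alpha = 3 = t$ one must instead use Lemma \ref{L:Commutator}: write $\psi_t = \Psi + i\psi_s$, note $U_t = U_s$ up to the extra commutator source terms $i\,\Im(\Psi\bar\psi_\ell)\psi_\ell + \ldots$ appearing in Lemma \ref{L:Commutator}, each of which is again a trilinear expression in $\{\psi_\alpha\}$ (with one factor $\Psi$), and estimate those with the same trilinear bounds.

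Next, \eqref{C:Psist L4}. For $\psi_s$ one combines $\psi_s = D_\ell \psi_\ell = \partial_\ell \psi_\ell + i A_\ell \psi_\ell$ (cf. \eqref{Psi_s Equation}) with the frequency-localized bound. Applying $P_k$, the first term contributes $2^k \lVert P_k \psi_\ell(s)\rVert_{F_k(T)} \lesssim 2^k (1+s2^{2k})^{-4} b_k$ via \eqref{SSV1}, and the paraproduct decomposition of $P_k(A_\ell\psi_\ell)$ is controlled using \eqref{PAPsi FS} and the $F_k$-bilinear property $\lVert P_k(uv)\rVert_{F_k} \lesssim \lVert u\rVert_{F_{k'}}\lVert v\rVert_{L^\infty_{t,x}}$ stated after Proposition \ref{MainLinearEstimate}, together with $\ell^2$-summation of the frequency envelopes; the $\varepsilon$ and $\sum_j b_j^2$ losses get absorbed into $(1 + \sum_j b_j^2)$. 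Then $\psi_t$ is handled through $\psi_t = \Psi + i\psi_s$: the $\psi_s$ part is what we just did, and for $\Psi$ one uses Lemma \ref{L:Commutator}, which says $\Psi$ solves a covariant heat equation $(\partial_s - \Delta)\Psi = (\text{trilinear in }\psi, \text{ one factor }\Psi) + (\text{trilinear in }\psi)$ with vanishing data contribution at $s = \infty$ in the appropriate sense; writing $\Psi$ via Duhamel from $s = \infty$ and applying \eqref{UFk}-type bounds (now with the commutator nonlinearity) plus $F_k \hookrightarrow L^4_{t,x}$ closes the estimate. At $s = 0$ one has $\Psi = 0$ exactly (since $\phi$ is a Schr\"odinger map there), consistent with the stated bound.

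\textbf{Main obstacle.} The routine part is the reduction to $F_k$-estimates and the embedding; the genuine work is handling the time-derivative field $\psi_t$, i.e., the $\alpha = t$ case in both displays, where one cannot directly use the heat-equation structure for $\psi_t$ (it is not parabolic in $s$ with a clean nonlinearity) and must route through the commutator $\Psi = \psi_t - i\psi_s$. The delicate point is verifying that the commutator nonlinearity from Lemma \ref{L:Commutator} — which contains a factor of $\Psi$ itself — can be closed: one needs an a priori bound on $P_k\Psi$ in $F_k(T)$ with heat-time decay, obtained by a Gronwall/bootstrap argument in $s$ along the parabolic Duhamel formula, using that the trilinear constants of Lemma \ref{L:Trilin1} produce the requisite smallness $\varepsilon$ and that $\Psi$ vanishes at the $s = \infty$ endpoint (via the decay \eqref{Soft Decay Bounds}). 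Once that self-improving estimate for $\Psi$ is in hand, substituting back gives the $\psi_t$ bounds with the same structure as the $\psi_s$ bounds, and the proof concludes by citing \cite[Lemma 7.17]{Sm10} for the precise bookkeeping of the $(1 + \sum_j b_j^2)$ factors.
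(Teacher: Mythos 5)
The paper does not supply its own proof of this lemma; it cites \cite[Lemma 7.17]{Sm10} and indicates only that the preceding $F_k$ estimates are the inputs. Your plan of reducing to those $F_k$ bounds through the embedding $F_k(T) \hookrightarrow L^4_{t,x}$, together with the decompositions $\psi_s = \partial_\ell\psi_\ell + iA_\ell\psi_\ell$ and $\psi_t = i\psi_s + \Psi$, is therefore very likely the right skeleton.

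However, there are concrete gaps. The Duhamel representation of $\Psi$ must run \emph{forward} from $s = 0$, where $\Psi(0) = 0$, namely $\Psi(s) = \int_0^s e^{(s-r)\Delta}(U_t - iU_s)(r)\,dr$; this is exactly how the paper uses it in the proof of Corollary~\ref{C:Comm L4}. Your ``Duhamel from $s = \infty$'' describes a backward heat flow, which is ill-posed and contradicts the initial condition $\Psi(0) = 0$ that you invoke two sentences later; you are likely conflating this with the first-order-in-$s$ formula \eqref{CC Integral Rep} for $A_\alpha$, which genuinely is integrated from $\infty$. Second, you open the argument for \eqref{L:L4NonlinearPart} by specializing \eqref{UFk} to ``$\alpha = m$ a spatial index,'' but the lemma's scope is $\alpha \in \{0, 3\}$ only, i.e.\ $\psi_s$ and $\psi_t$; the spatial case is not part of the statement. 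This misreading matters because the extra factor $2^k(1 + \sum_j b_j^2)$ appearing on the right of \eqref{L:L4NonlinearPart} relative to \eqref{UFk} is precisely the price of the additional derivative hidden in $\psi_s = D_\ell\psi_\ell$ and of pulling out the $A_\ell$ factor, and your sketch absorbs that price as ``harmless slack'' rather than deriving it. Finally, you correctly flag that $U_t$ contains $\psi_t$, hence $\Psi$ itself, so that closing the $\alpha = 3$ case requires a bootstrap in $s$; but that bootstrap is the substantive step of the proof and your sketch leaves it open, so as written the $\alpha = 3$ estimate is circular.
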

Taking advantage of the fact that the heat and Schr\"odinger flows
share the same initial data at $s = 0$ for all $t$, we may use the above
bounds to control the commutator of these flows with an $\varepsilon$ gain.
\begin{cor}
Let $\Psi = \psi_t - i \psi_s$. Then
\begin{equation*}
\lVert P_k \Psi \rVert_{L^4_{t,x}}
\lesssim \varepsilon 2^k (1 + \sum_j b_j^2) b_k
\end{equation*}
\label{C:Comm L4}
\end{cor}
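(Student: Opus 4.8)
The plan is to evaluate $\Psi = \psi_t - i\psi_s$ by integrating the parabolic equation for $\Psi$ from Lemma \ref{L:Commutator} backward from $s = \infty$, using crucially that $\Psi(s = 0) = 0$ for all $t$ (this is exactly the statement that at $s = 0$ the Schr\"odinger map equation $\psi_t = iD_\ell\psi_\ell = i\psi_s$ holds). Write the equation from Lemma \ref{L:Commutator} in Duhamel form relative to the free heat semigroup $e^{s\Delta}$: schematically,
\[
\Psi(s) = \int_0^s e^{(s - s')\Delta} \Big( (i A_\ell \partial_\ell + i\partial_\ell A_\ell - A_x^2)\Psi + \text{(curvature)}\cdot\Psi + \mathcal{R} \Big)(s')\, ds',
\]
where the terms linear in $\Psi$ group into a $U_\ell$-type expression and $\mathcal{R}$ collects the remaining source terms $i\Im(i\psi_s\bar\psi_\ell)\psi_\ell - \Im(\psi_s\bar\psi_\ell)\psi_\ell$, which are purely in terms of $\psi_s$ and $\psi_\ell$ and carry no factor of $\Psi$.

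First I would treat the source term $\mathcal{R}$. Note that $\Im(i\psi_s\bar\psi_\ell) = \Re(\psi_s\bar\psi_\ell)$, so $\mathcal{R}$ is a sum of trilinear expressions of the form $\psi_s \cdot \overline{\psi_\ell} \cdot \psi_\ell$ (with various conjugations and the real/imaginary operations, which only cost constants). The key input is the $L^4_{t,x}$ bound (\ref{C:Psist L4}) for $P_k\psi_s(s)$, combined with the $F_k$ estimate (\ref{SSV1}) for $P_k\psi_m(s)$; these feed into the trilinear machinery of \S\ref{SS:Trilinear Estimates and Summation} — concretely Lemma \ref{L:Trilin1} together with Corollary \ref{C:MTE1}, modified so that one factor is measured in $L^4_{t,x}$ rather than $G_k$ — to produce, after applying the Duhamel-with-heat-kernel bound (\ref{L:L4NonlinearPart})-style argument, an estimate of the form $\lVert P_k \int_0^s e^{(s-s')\Delta}\mathcal{R}(s')\,ds'\rVert_{L^4_{t,x}} \lesssim \varepsilon 2^k (1 + \sum_j b_j^2) b_k$. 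The $\varepsilon$ gain here comes from integrating $\sup_x |\psi_x(s)|^2$ in $s$ via (\ref{ek4}) together with the energy-dispersion hypothesis, exactly as in the proof of (\ref{L:L4NonlinearPart}).

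Next I would handle the terms linear in $\Psi$. These have the same algebraic shape as the heat-flow nonlinearity $U_\ell$ in (\ref{Heat Nonlinearity 0}), but with one copy of $\psi_\alpha$ replaced by $\Psi$. Following the proof of (\ref{L:L4NonlinearPart}) verbatim — Duhamel with the parabolic smoothing factor $(1 + s 2^{2k})^{-2}$, trilinear estimate, summation over frequencies — gives a bound of the shape
\[
\Big\lVert P_k \int_0^s e^{(s-s')\Delta}(U\text{-type}[\Psi])(s')\,ds' \Big\rVert_{L^4_{t,x}} \lesssim \varepsilon\, 2^k (1 + \textstyle\sum_j b_j^2)\, d_k,
\]
where $d_k$ is a frequency envelope dominating $\sup_s (1 + s2^{2k})^2 2^{-k}\lVert P_k\Psi(s)\rVert_{L^4_{t,x}}$ — again with an $\varepsilon$ to spare, drawn from (\ref{ek4}) and the energy dispersion. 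Closing the estimate is then a fixed-point / bootstrap step: writing $d_k$ for the envelope above and combining the two contributions gives $d_k \lesssim \varepsilon 2^k(1 + \sum_j b_j^2) b_k + \varepsilon(1 + \sum_j b_j^2) d_k$, and since $\varepsilon(1 + \sum_j b_j^2) \ll 1$ by our running assumption we absorb the second term, conclude $d_k \lesssim \varepsilon 2^k (1 + \sum_j b_j^2) b_k$, and evaluate at $s = 0$ to obtain the claimed bound on $\lVert P_k\Psi\rVert_{L^4_{t,x}}$.

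The main obstacle I anticipate is bookkeeping the linear-in-$\Psi$ terms so that the bootstrap closes: one must know a priori that $\Psi(s)$ lies in the right spaces with finite (though not yet small) norms — this is where the soft bounds (\ref{Soft Field Space Bounds})/(\ref{Soft Field Time Bounds}) on $\psi_t, \psi_s$ (hence on $\Psi$) are used to guarantee $d_k < \infty$ before the contraction argument is run — and one must verify that the trilinear/summation lemmas of \S\ref{SS:Trilinear Estimates and Summation} apply with an $L^4_{t,x}$ slot in place of a $G_k$ slot, which is routine but needs (\ref{SSV1}) to control the $F_k$ factors uniformly in $s$ with the parabolic decay. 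Everything else is a direct transcription of the arguments already used to prove (\ref{C:Psist L4}) and (\ref{L:L4NonlinearPart}).
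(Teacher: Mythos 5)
Your high-level plan — Duhamel from $s=0$ exploiting $\Psi(0)=0$ — is the right idea, but you take a detour that the paper avoids, and the detour introduces work that is not actually needed.

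The paper's proof is essentially one line: from \eqref{genPsiEQ} applied to $\alpha = t$ and $\alpha = s$ one has $(\partial_s - \Delta)\Psi = U_t - iU_s$, and since $\Psi(0)=0$ Duhamel gives $\Psi(s) = \int_0^s e^{(s-r)\Delta}(U_t - iU_s)(r)\,dr$. The bound \eqref{L:L4NonlinearPart} is stated for $\alpha \in \{0,3\}$, i.e., it already covers \emph{both} $U_s$ \emph{and} $U_t$, so applying it to each piece gives the conclusion immediately — with the stronger $(1+s2^{2k})^{-2}$ decay and no bootstrap. There is no need to segregate the right-hand side into ``linear-in-$\Psi$'' and ``source'' parts as in Lemma~\ref{L:Commutator}, nor to re-derive an $L^4$ estimate by adapting the trilinear machinery with an $L^4$ slot, nor to run a contraction to absorb the $\Psi$-carrying terms. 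The $\varepsilon$ gain is already baked into \eqref{L:L4NonlinearPart}, which is the only input needed.

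Your bootstrap route is not wrong in spirit: the commutator equation in Lemma~\ref{L:Commutator} is algebraically equivalent to $(\partial_s - \Delta)\Psi = U_t - iU_s$, and with enough effort the absorption argument you sketch should close, since the coefficients $A_x, \psi_x$ carry the necessary smallness and soft bounds supply the a priori finiteness of $d_k$. But this costs you (a) an unverified modification of the trilinear estimates to accept an $L^4_{t,x}$ factor in place of a $G_k$ factor, and (b) the bookkeeping of a fixed-point argument — both avoidable by recognizing that \eqref{L:L4NonlinearPart} applies to $U_t$ as well as $U_s$. One more small slip: you cannot ``evaluate at $s=0$'' to conclude, since $\Psi(0)\equiv 0$; the estimate you want holds uniformly in $s\geq 0$, and that is what your envelope $d_k$ is designed to encode.
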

\begin{proof}
From (\ref{genPsiEQ}) and (\ref{Heat Nonlinearity 0})
we have
\begin{equation*}
(\partial_s - \Delta) \Psi = U_t - i U_s
\end{equation*}
As $\Psi(s = 0) = 0$, Duhamel implies
\begin{equation*}
\Psi(s) = \int_0^s e^{(s - r)\Delta}( U_t - i U_s )(r) dr
\end{equation*}
The conclusion follows from \eqref{L:L4NonlinearPart}.
\end{proof}

This next lemma is a special case of \cite[Lemma 7.18]{Sm10}:
\begin{lem}
It holds that
\begin{equation}
\lVert P_k A_m(0) \rVert_{L_{t,x}^4} \lesssim
b_k^2
\label{Pk A4}
\end{equation}
\label{L:Pk A4}
\end{lem}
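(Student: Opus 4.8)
The plan is to control $P_k A_m(0)$ in $L^4_{t,x}$ by exploiting the integral representation $A_m(0) = -\int_0^\infty \Im(\overline{\psi_m}\psi_s)(s')\,ds'$ from \eqref{CC Integral Rep}. Since the $L^4_{t,x}$ norm is a norm, I would first write $P_k A_m(0)$ as a superposition over heat-times,
\[
P_k A_m(0) = -\int_0^\infty P_k \Im\bigl(\overline{\psi_m}(s')\,\psi_s(s')\bigr)\,ds',
\]
and apply Minkowski's inequality to pull the $L^4_{t,x}$ norm inside the $s'$-integral. So it suffices to bound $\int_0^\infty \lVert P_k(\overline{\psi_m}(s')\psi_s(s'))\rVert_{L^4_{t,x}}\,ds'$.

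Next I would decompose the product $\overline{\psi_m}\psi_s$ via Littlewood–Paley into its high-high, high-low, and low-high pieces relative to the output frequency $2^k$. For each piece I would place one factor in a space giving $L^\infty_{t,x}$-type control and the other in $L^4_{t,x}$, using Hölder plus Bernstein where needed. The quantitative inputs are the heat-flow estimates already recorded: \eqref{C:Psist L4} controls $\lVert P_{k'}\psi_s(s')\rVert_{L^4_{t,x}}$ with the decay factor $(1+s'2^{2k'})^{-2}2^{k'}(1+\sum_j b_j^2)b_{k'}$, while \eqref{SSV1} (or the $F_k\hookrightarrow L^\infty_t L^2_x$ and maximal-function embeddings built into $F_k(T)$) together with Bernstein gives $\lVert P_{k'}\psi_m(s')\rVert_{L^\infty_{t,x}}\lesssim 2^{k'}(1+s'2^{2k'})^{-4}b_{k'}$. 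Multiplying these, the heat-time integral $\int_0^\infty$ converges because of the combined polynomial decay in $s'$, and in each frequency regime one factor of $2^{k'}$ is compensated by integration in $s'$ (scaling $\int_0^\infty (1+s'2^{2k'})^{-N}\,ds' \sim 2^{-2k'}$), producing a net bound of the form $b_{k'}^2$ after summing the dyadic pieces.

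The summation step is where the frequency-envelope structure enters: in the high-high regime one sums $\sum_{k'\gtrsim k} b_{k'}^2 \cdot 2^{\text{(gain)}(k-k')}$, controlled by the slowly-varying property \eqref{Slowly Varying} and the summation rule \eqref{Sum 2}; in the low-high and high-low regimes the frequency of the output is comparable to the higher input frequency, so one directly gets $b_k^2$ up to the off-diagonal decay of the Littlewood–Paley product. Throughout I would absorb the harmless factor $1+\sum_j b_j^2 \lesssim 1$ using the standing smallness assumption $\varepsilon^{1/2}\sum_j b_j^2 \ll 1$.

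The main obstacle is keeping the heat-time integral convergent while extracting the correct power $b_k^2$ rather than, say, $2^k$ times something: one must be careful that in the high-high interaction (where $k' \gg k$ and the output frequency is much smaller than either input) the decay in $s'$ is strong enough to beat the $2^{2k'}$ that arises from the two derivative-like factors $\psi_m, \psi_s$, and that the resulting geometric series in $k'-k$ genuinely converges — this is exactly what \eqref{C:Psist L4} is designed to provide, and it is the reason the statement is only a \emph{special case} of \cite[Lemma 7.18]{Sm10}. Once the bookkeeping of decay exponents is arranged, the estimate \eqref{Pk A4} follows.
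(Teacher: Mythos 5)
Your overall strategy --- the integral representation $A_m(0) = -\int_0^\infty \Im(\overline{\psi_m}\psi_s)(s')\,ds'$ from \eqref{CC Integral Rep}, Minkowski in heat-time, Littlewood--Paley decomposition, H\"older against the $L^4$ decay bound \eqref{C:Psist L4} for $\psi_s$ and $L^\infty$-type control on $\psi_m$ from \eqref{SSV1} plus Bernstein, then frequency-envelope summation --- is the correct skeleton, and the paper offers no inline proof (it simply cites \cite[Lemma 7.18]{Sm10}), so you are reconstructing the argument rather than paralleling one.

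There is, however, a genuine gap in the high-high regime, and your own closing paragraph misdiagnoses where the convergence comes from. Pairing $L^\infty_{t,x}\times L^4_{t,x}\to L^4_{t,x}$ directly gives, for comparable input frequencies $k_1\sim k_2=:k'\geq k$, the integrand $2^{k_1}(1+s'2^{2k_1})^{-4}b_{k_1}\cdot 2^{k_2}(1+s'2^{2k_2})^{-2}b_{k_2}$; after $\int_0^\infty ds'\sim 2^{-2k'}$ this leaves exactly $b_{k'}^2$ with \emph{no} factor depending on $k-k'$. The heat-time decay only breaks even against the two factors of $2^{k'}$; it does not produce the geometric series you invoke, and since a frequency envelope may grow like $2^{\delta|k'-k|}$, the sum $\sum_{k'\geq k}b_{k'}^2$ is not $\lesssim b_k^2$ in general. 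To close the high-high case you must first Bernstein the \emph{output} projection, $\lVert P_k(\,\cdot\,)\rVert_{L^4_{t,x}}\lesssim 2^{k/2}\lVert\,\cdot\,\rVert_{L^4_tL^2_x}$, then pair $\lVert P_{k_1}\psi_m\rVert_{L^\infty_tL^4_x}\lesssim 2^{k_1/2}(1+s'2^{2k_1})^{-4}b_{k_1}$ (only half a derivative, not a full one) against $\lVert P_{k_2}\psi_s\rVert_{L^4_{t,x}}$; integrating in $s'$ then yields the genuine off-diagonal factor $2^{(k-k')/2}$ that makes the sum converge via \eqref{Sum 2}. With that refinement made explicit, the rest of your argument goes through.
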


\section{The electric potential}
\label{SS:Electric}

In this section we prove that $V_m$ (defined by \ref{V Def}) is perturbative
in the sense that $\lVert P_k V_m \rVert_{N_k(T)} \lesssim \varepsilon b_k$.
Throughout this section, $\varepsilon > 0$ is a very small number such that
$b_k \leq \varepsilon$ and
$\varepsilon^{1/2} \sum_j b_j^2 \ll 1.$ In application, it will be set equal to a suitable power
of the energy dispersion parameter $\varepsilon_0$.
\begin{prop}
The term $V_m = (A_t + A_x^2) \psi_m - i \psi_\ell \Im(\bar{\psi}_\ell \psi_m)$
is perturbative.
\end{prop}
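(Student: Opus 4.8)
The strategy is to decompose $V_m$ into its three constituent pieces, $A_t \psi_m$, $A_x^2 \psi_m$, and the cubic term $\psi_\ell \Im(\bar{\psi}_\ell \psi_m)$, and to show that each frequency localization $P_k$ of each piece is bounded by $\varepsilon b_k$ in $N_k(T)$. Since the nonlinearity lives at the physical time $s = 0$, the key will be to express each of the gauge components $A_t$, $A_x$ in terms of the caloric-gauge integral representations and to exploit the heat-flow decay estimates gathered in the previous two sections. Concretely, I would use $A_\alpha(0) = -\int_0^\infty \Im(\overline{\psi_\alpha}\psi_s)(s')\,ds'$ from (\ref{CC Integral Rep}), together with the analogous representation of $\psi_t(0)$ obtained by integrating (\ref{genPsiEQ}) for $\alpha = t$ (or, more precisely, using $\psi_t(0) = i\psi_s(0) + \Psi(0) = i\psi_s(0)$ at $s=0$ and then propagating), so that everything is phrased as heat-time integrals of quantities controlled by (\ref{SSV1})--(\ref{PA Sk}), (\ref{C:Psist L4})--(\ref{L:L4NonlinearPart}), and Corollary \ref{C:Comm L4}.

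For the cubic term $\psi_\ell \Im(\bar\psi_\ell \psi_m)$ at $s=0$, I would paraproduct-decompose into Littlewood--Paley pieces $P_{k_1}\psi_\ell P_{k_2}\bar\psi_\ell P_{k_3}\psi_m$ and invoke the improved trilinear estimate of Lemma \ref{L:GGGTrilin}, since all three factors are derivative fields at $s=0$ and hence lie in $G_{k_i}(T)$ with $\lVert P_{k_i}\psi \rVert_{G_{k_i}(T)} \lesssim b_{k_i}$ by the bootstrap hypothesis (\ref{Main Bootstrap}). Summing via Corollary \ref{C:GGG} gives a bound of the form $b_k \cdot (\sup_j b_j)^2 \lesssim \varepsilon^2 b_k \ll \varepsilon b_k$, using the energy-dispersion bound $b_k \leq \varepsilon$. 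For the magnetic-squared term $A_x^2 \psi_m$, I would write $A_x(0)$ via (\ref{CC Integral Rep}) as a heat integral of $\psi_x \psi_s$-type quadratics; combined with the $F_k$-bounds (\ref{PA Sk}) and (\ref{Pk A4}) for $P_k A_m(0)$ and the $L^4$-type bounds, this term is effectively quartic in $\psi$ and gains at least two factors of $b$, hence is comfortably perturbative. The analogous FFG trilinear summation (Corollaries \ref{C:MTE1}, \ref{C:MTE2}) handles the frequency bookkeeping once $A_x^2$ is treated as a function in $F_{k'}$ and $\psi_m$ in $G_{k''}$.

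The main obstacle is the term $A_t \psi_m$, and this is precisely the improvement over \cite{Sm10} that the introduction advertises. The difficulty is that $A_t(0)$ is not obviously small: its integral representation $A_t(0) = -\int_0^\infty \Im(\overline{\psi_t}\psi_s)(s')\,ds'$ pairs $\psi_t$ against $\psi_s$, and naively $\psi_t$ has no heat-smoothing decay because the Schrödinger flow is not parabolic. The key trick — the "flows do not commute" mechanism of Lemma \ref{L:Commutator} and Corollary \ref{C:Comm L4} — is to write $\psi_t = i\psi_s + \Psi$ where $\Psi = \psi_t - i\psi_s$ vanishes at $s=0$ and satisfies the favorable bound $\lVert P_k \Psi \rVert_{L^4_{t,x}} \lesssim \varepsilon 2^k (1 + \sum_j b_j^2) b_k$. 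Then $\Im(\overline{\psi_t}\psi_s) = \Im(\overline{i\psi_s}\psi_s) + \Im(\overline\Psi\psi_s) = \Re(|\psi_s|^2)\cdot 0 + \ldots$; more carefully, $\Im(\overline{i\psi_s}\psi_s) = -\Re(\overline{\psi_s}\psi_s) = -|\psi_s|^2$ is real-valued... wait, I should be precise: $\Im(\overline{i\psi_s}\psi_s) = \Im(-i|\psi_s|^2) = -|\psi_s|^2$, so this contributes $\int_0^\infty |\psi_s|^2\,ds'$, which is controlled by (\ref{ek4}) in $L^\infty_x$ but we need a genuinely perturbative bound. The resolution is that this "diagonal" piece, when multiplied by $\psi_m$ and frequency-localized, is again a high-order term with two copies of $\psi_s$ (each carrying a $b$ and heat decay from (\ref{C:Psist L4})), while the cross term $\Im(\overline\Psi\psi_s)$ uses the $\varepsilon$-gain from Corollary \ref{C:Comm L4}. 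So $A_t\psi_m$ splits into a piece bounded using $\lVert\psi_s\rVert_{L^4}^2$-type quantities (perturbative because quartic in $\psi$) and a piece with an explicit $\varepsilon$ from the commutator bound. I would carry this out by inserting the heat integrals, distributing $P_k$ by paraproduct, and bounding each resulting trilinear/quadrilinear expression via Lemmas \ref{L:NkBilinear}--\ref{L:L3Bilinear} and the $F_k$-decay estimates, integrating in $s'$ using the polynomial heat-decay factors $(1 + s'2^{2k})^{-N}$; the $s'$-integrals converge and produce the factors $2^{-k}$ that match the $2^k$ in the $A_t$ estimates, leaving the net bound $\varepsilon b_k$ as desired.
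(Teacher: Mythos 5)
Your plan for the cubic term and for $A_x^2\psi_m$ matches the paper's argument: put each derivative field at $s=0$ in a $G_k$ space and use the improved trilinear estimate (Lemma \ref{L:GGGTrilin} plus Corollary \ref{C:GGG}) for the cubic term, and use the $b_k^2$ bound on $P_k A_m(0)$ (Lemma \ref{L:Pk A4}) together with a bilinear $L^2_{t,x}\times G_k\to N_k$ estimate (what the paper isolates as Lemma \ref{L:VmBound}) for $A_x^2\psi_m$. You also correctly identify the central new idea for $A_t\psi_m$, namely writing $\psi_t = i\psi_s + \Psi$ and using the commutator bound Corollary \ref{C:Comm L4} for the cross term $\Im(\overline\Psi\psi_s)$.

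There is, however, a genuine gap in your treatment of the diagonal piece $\int_0^\infty \lvert\psi_s\rvert^2\,ds' \cdot \psi_m$. You assert it is ``perturbative because quartic in $\psi$,'' arguing that each copy of $\psi_s$ carries a $b$ and heat-decay from \eqref{C:Psist L4}. But this does not produce the needed $\varepsilon$-gain. If you Littlewood--Paley decompose and estimate in $L^4\times L^4\to L^2$, the bound \eqref{C:Psist L4} gives
\begin{equation*}
\Bigl\lVert \int_0^\infty \lvert\psi_s\rvert^2\,ds' \Bigr\rVert_{L^2_{t,x}}
\lesssim \sum_{j, j'} 2^{j+j'} b_j b_{j'}\int_0^\infty (1+s2^{2j})^{-2}(1+s2^{2j'})^{-2}\,ds
\lesssim \sum_j b_j^2,
\end{equation*}
and $\sum_j b_j^2$ is $O(1)$ (comparable to the energy), not $O(\varepsilon)$. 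Feeding this into the bilinear estimate of Lemma \ref{L:VmBound} only yields $\lesssim b_k$, not $\lesssim \varepsilon b_k$. Being of high homogeneity in $\psi$ is not by itself enough; the cubic term works because all three factors are in $G_k$ spaces and the improved trilinear summation produces a genuine off-diagonal product $b_k^3\lesssim\varepsilon^2 b_k$, whereas $\psi_s(s')$ for $s'>0$ is only controlled in $F_k$ and $L^4$-type spaces. The paper resolves this by decomposing further: write $\psi_s = \partial_\ell\psi_\ell + iA_\ell\psi_\ell$ inside the integral. The $\lvert A_\ell\psi_\ell\rvert^2$ contribution gains a factor $\sup_j b_j^2\leq\varepsilon^2$ by pulling $A_\ell$ out in $L^2$ and controlling $\int_0^\infty\lvert\psi_\ell\rvert^2$ via \eqref{ek4}. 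For the $\lvert\partial_\ell\psi_\ell\rvert^2$ contribution, one expands $\psi_\ell(s)$ via Duhamel, $\psi_\ell(s) = e^{s\Delta}\psi_\ell(0) + \int_0^s e^{(s-r)\Delta}U_\ell(r)\,dr$; the purely linear terms can be placed in $G_k$ spaces (unlike $\psi_s(s')$ itself), so Lemma \ref{L:GGGTrilin} applies and Corollary \ref{C:GGG} yields $b_k^3\leq\varepsilon^2 b_k$, while the terms involving the Duhamel tail inherit the explicit $\varepsilon$-gain from \eqref{UFk}. Without this finer decomposition and the reuse of $G_k$ spaces for the linear heat evolution, the diagonal piece does not close.
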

We prove this in several steps, starting with the cubic term.

\textbf{The cubic term $- i \psi_\ell \Im(\bar{\psi}_\ell \psi_m)$.}

To control the cubic term we use off-diagonal decay and take advantage of the fact that any $\psi$
may be placed in a $G_k$ space. Hence Lemma \ref{L:GGGTrilin} applies
and from Corollary \ref{C:GGG} we conclude
\begin{equation*}
\lVert P_k ( \psi_\ell \Im(\bar{\psi}_\ell \psi_m) )\rVert_{N_k(T)}
\lesssim \varepsilon b_k
\end{equation*}

\textbf{The term $A_x^2 \psi$.}

We conclude as a corollary of Lemma \ref{L:Pk A4} that
\begin{equation}
\lVert A_x^2(0) \rVert_{L^2_{t,x}}
\lesssim
\lVert A_x(0) \rVert_{L^4_{t,x}}^2
\lesssim
\sum_{k \in \mathbb{Z}} \lVert P_k A_x(0) \rVert_{L^4_{t,x}}^2
\lesssim
\sup_{j \in \mathbb{Z}} b_j^2 \cdot \sum_{k \in \mathbb{Z}} b_k^2
\label{Ax2 L2}
\end{equation}
We next show how to use this $L^2$ bound to control $A_x^2 \psi_m$
in $N_k$ spaces.
\begin{lem}
Let $f \in L^2_{t,x}$. Then
\begin{equation}
\lVert P_k (f \psi_m) \rVert_{N_k(T)} \lesssim
\lVert f \rVert_{L^2_{t,x}} b_k
\end{equation}
\label{L:VmBound}
\end{lem}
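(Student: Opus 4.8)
The goal is to bound $\|P_k(f\psi_m)\|_{N_k(T)}$ by $\|f\|_{L^2_{t,x}} b_k$, where $f \in L^2_{t,x}$ and $\psi_m$ is the derivative field. The natural tool is the bilinear estimate Lemma \ref{L:NkBilinear}, which controls $\|P_k(hf)\|_{N_k(T)}$ in terms of $\|h\|_{L^2_{t,x}}$ and an $F$-norm (or $G$-norm) of the other factor. Here $f$ plays the role of the $L^2_{t,x}$ function $h$, and $\psi_m$ — more precisely its Littlewood-Paley pieces $P_{k_1}\psi_m$ — plays the role of the $F_{k_1}(T)$ (or $G_{k_1}(T)$) factor.

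First I would perform a Littlewood-Paley decomposition: write $\psi_m = \sum_{k_1} P_{k_1}\psi_m$ and split the sum over $k_1$ according to whether $|k_1 - k| \leq 80$, $k_1 \leq k - 80$, or $k_1 \geq k + 80$ (the last requires a further split where the output frequency $\sim 2^k$ comes from the high-high interaction). For the near-diagonal piece $|k_1 - k| \leq 80$ apply \eqref{bilin1}: $\|P_k(f\,P_{k_1}\psi_m)\|_{N_k(T)} \lesssim \|f\|_{L^2_{t,x}}\|P_{k_1}\psi_m\|_{F_{k_1}(T)}$. For the low-frequency piece $k_1 \leq k-80$ apply the improved \eqref{bilin2 improv}, gaining $2^{-|k-k_1|/2}$. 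For the high-frequency piece, one uses that the output is localized to frequency $\sim 2^k$ only when two high-frequency inputs interact; since here there is only one high factor $P_{k_1}\psi_m$, the relevant mechanism is that $P_k(f\cdot P_{k_1}\psi_m)$ with $k_1 \gg k$ forces $\hat f$ to also have frequency $\sim 2^{k_1}$, so one commutes a frequency projection onto $f$ and uses off-diagonal decay — alternatively this piece is handled by writing $P_k(f g) = P_k((P_{\sim k_1}f)(P_{k_1}g))$ and noting that $\|P_{\sim k_1}f\|_{L^2_{t,x}} \leq \|f\|_{L^2_{t,x}}$, again invoking \eqref{bilin1} at frequency $k_1$ together with the $F_k \approx F_{k+1}$ property. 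In all cases one then invokes the heat-flow-free $F_k$ bound for the derivative field at $s = 0$, namely \eqref{SSV1} with $\sigma = 0$ and $s = 0$: $\|P_{k_1}\psi_m(0)\|_{F_{k_1}(T)} \lesssim b_{k_1}$.

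The final step is summation. After the reductions one has a bound of the shape
\begin{equation*}
\|P_k(f\psi_m)\|_{N_k(T)} \lesssim \|f\|_{L^2_{t,x}} \sum_{k_1 \in \mathbb{Z}} 2^{-|k-k_1|/2} b_{k_1},
\end{equation*}
and since $\{b_k\}$ is a $\delta$-frequency envelope with $\delta$ small (so $1/2 > \delta$), the slowly-varying property \eqref{Slowly Varying} and the summation rule give $\sum_{k_1} 2^{-|k-k_1|/2} b_{k_1} \lesssim b_k$, which yields the claim.

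The main obstacle I anticipate is the high-high-to-low interaction term ($k_1 \geq k+80$): the bilinear lemmas as stated are phrased for $h \in L^2_{t,x}$ times an $F$-space factor, and when the $L^2$ factor $f$ is the one carrying low output frequency the roles are somewhat asymmetric. The resolution is that $f$ itself must be frequency-localized near $2^{k_1}$ for the product to land at frequency $2^k$, so $\|P_{\approx k_1} f\|_{L^2_{t,x}} \leq \|f\|_{L^2_{t,x}}$ with no loss, and then \eqref{bilin1} applied at scale $k_1$ (rather than $k$) together with the scale-invariance $\|\cdot\|_{F_k(T)} \approx \|\cdot\|_{F_{k+1}(T)}$ transfers the bound back down to $N_k(T)$; one does lose the off-diagonal gain here but retains enough because $\sum_{k_1 \geq k} b_{k_1}^2 < \infty$ is not even needed — the bare $\ell^1$-type sum $\sum_{k_1} b_{k_1}$ over the high range is controlled once we observe that only frequencies $k_1$ with $P_{\approx k_1} f \neq 0$ contribute and Cauchy–Schwarz in the orthogonality of the $f$-pieces can be used if a cleaner bound is desired. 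In practice this is exactly the standard high-high argument from \cite[\S3]{BeIoKeTa11} and I would cite it rather than reproduce it.
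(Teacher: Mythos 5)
Your overall scaffolding matches the paper's: Littlewood--Paley decompose the product into near-diagonal, low-$\psi$, and high-high pieces and feed each into the bilinear estimates from Lemma \ref{L:NkBilinear}, then sum using the frequency-envelope property. The near-diagonal case via \eqref{bilin1} and the low-$\psi$ case via the improved \eqref{bilin2 improv} (the paper uses the unimproved \eqref{bilin2}, but your version with the larger $1/2$ gain is equally valid) are both correct.

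The high-high interaction ($k_1 \geq k + 80$) is where your proposal goes wrong. You suggest ``invoking \eqref{bilin1} at frequency $k_1$ together with the $F_k \approx F_{k+1}$ property.'' This does not work: \eqref{bilin1} requires $\lvert k_1 - k\rvert \leq 80$, and the equivalence $F_k \approx F_{k+1}$ only bridges adjacent scales, not a gap of $80$; moreover the output lives in $N_k(T)$, not $N_{k_1}(T)$, and these spaces are supported on disjoint annuli. The correct tool is \eqref{bilin3}, which is designed for exactly this configuration and supplies the gain $2^{-\lvert k - k_1\rvert/6}$. You also assert, contradicting your own earlier sentence, that ``one does lose the off-diagonal gain here'' and that ``the bare $\ell^1$-type sum $\sum_{k_1} b_{k_1}$ over the high range is controlled.'' This is false: the envelope $\{b_k\}$ is only in $\ell^2$, and slowly-varying gives $b_{k_1} \leq 2^{\delta(k_1 - k)}b_k$, which grows with $k_1$, so $\sum_{k_1 > k} b_{k_1}$ need not converge. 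Some extra structure is essential. The paper obtains it by also decomposing $f$ over the high range, applying \eqref{bilin3} with the off-diagonal gain, then using Cauchy--Schwarz and the square-function estimate $\bigl(\sum_{k_1} \lVert P_{k_1} f\rVert_{L^2_{t,x}}^2\bigr)^{1/2} \lesssim \lVert f\rVert_{L^2_{t,x}}$ to resum. You mention this Cauchy--Schwarz route only as an optional ``cleaner bound'' at the end; in fact it (or the off-diagonal decay from \eqref{bilin3}) is the load-bearing step, not a cosmetic one. Fix the high-high case by replacing \eqref{bilin1} with \eqref{bilin3} and carrying through the orthogonality/Cauchy--Schwarz summation, and the proof is sound.
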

\begin{proof}
Begin with the following Littlewood-Paley decomposition of $P_k(f \psi_x)$:
\begin{align*}
P_k(f \psi_x) =& \; P_k (P_{<k - 80}f P_{k - 5 < \cdot < k + 5} \psi_x) \\
& \;+
\sum_{ \substack{\lvert k_1 - k \rvert \leq 4 \\ k_2 \leq k - 80}} P_k(P_{k_1}f P_{k_2} \psi_x) +
\sum_{ \substack{\lvert k_1 - k_2 \rvert \leq 90 \\ k_1, k_2 > k - 80}}
P_k( P_{k_1}f P_{k_2} \psi_x )
\end{align*}
The first term is controlled using (\ref{bilin1}):
\begin{align*}
\lVert P_k (P_{<k - 80}f P_{k - 5 < \cdot < k + 5} \psi_x) \rVert_{N_k(T)}
&\leq \lVert P_k (P_{<k - 80}f P_{k - 5 < \cdot < k + 5} \psi_x) \rVert_{L_{t,x}^{4/3}} \\
&\leq \lVert P_{< k - 80} f \rVert_{L_{t,x}^2} \lVert P_{k - 5 < \cdot < k + 5} \psi_x \rVert_{G_k(T)}
\end{align*}
To control the second term we apply (\ref{bilin2}):
\begin{equation*}
\lVert P_k (P_{k_1} f P_{k_2} \psi_x) \rVert_{N_k(T)} \lesssim 2^{- \lvert k_2 - k \rvert / 6}
\lVert P_{k_1} f \rVert_{L_{t,x}^2} \lVert P_{k_2} \psi_x \rVert_{G_{k_2}(T)}
\end{equation*}
To control the high-high interaction, apply (\ref{bilin3}):
\begin{equation*}
\lVert P_k( P_{k_1}f P_{k_2} \psi_x ) \rVert_{N_k(T)}
\lesssim
2^{- \lvert k - k_2 \rvert / 6} \lVert P_{k_1} f \rVert_{L_{t,x}^2} \lVert P_{k_2} \psi_x \rVert_{G_{k_2}(T)}
\end{equation*}
Therefore
\begin{equation*}
\sum_{ \substack{\lvert k_1 - k_2 \rvert \leq 90 \\ k_1, k_2 > k - 80}}
\lVert 
P_k( P_{k_1}f P_{k_2} \psi_x )
\rVert_{N_k(T)}
\lesssim
\sum_{ \substack{\lvert k_1 - k_2 \rvert \leq 90 \\ k_1, k_2 > k - 80}}
2^{-\lvert k - k_2\rvert/6} \lVert P_{k_1} f \rVert_{L_{t,x}^2} b_{k_2}
\end{equation*}
and so by Cauchy-Schwarz
\begin{equation*}
\sum_{ \substack{\lvert k_1 - k_2 \rvert \leq 90 \\ k_1, k_2 > k - 80}}
\lVert 
P_k( P_{k_1}f P_{k_2} \psi_x )
\rVert_{N_k(T)}
\lesssim
b_k
\left( \sum_{k_1 \geq k - 80} \lVert P_{k_1} f \rVert_{L_{t,x}^2}^2 \right)^{1/2}
\end{equation*}
Upon interchanging the $L_{t,x}^2$ and $\ell^2$ norms, 
we conclude from the standard square function estimate that
\begin{equation*}
\sum_{ \substack{\lvert k_1 - k_2 \rvert \leq 90 \\ k_1, k_2 > k - 80}}
\lVert 
P_k( P_{k_1}f P_{k_2} \psi_x )
\rVert_{N_k(T)}
\lesssim
\lVert f \rVert_{L_{t,x}^2} b_k
\end{equation*}
\end{proof}
Together (\ref{Ax2 L2}) and Lemma \ref{L:VmBound}
imply
\begin{equation}
\lVert P_k(A_x^2 \psi_m) \rVert_{N_k(T)}
\lesssim
\varepsilon b_k
\end{equation}

\textbf{The leading term $A_t \psi$.}

This term requires more effort to bound, as its behavior blends
that of the cubic term and $A_x^2 \psi$.
The main difficulty arises from the fact that we do not control
$\psi_t(s)$ in $F_k$ spaces for positive heat flow times $s > 0$.
While at $s = 0$ we do indeed have $\psi_t = i D_j \psi_j$ 
(because at $s = 0$ it holds that $\psi_t = i \psi_s$)
as a consequence
of the fact that $\phi$ is a Schr\"odinger map, along the heat
flow we do not have such an explicit representation of $\psi_t$
and instead must access it through the commutator of the heat
and Schr\"odinger flows.
Thus our first step is to represent $\psi_t$ as $i \psi_s + \Psi$.
It may be tempting to try to place $\Psi$ in the $F_k$ spaces,
as we do have such bounds for $\psi_s$. The $F_k$ bounds
for $\psi_s$, however, are obtained as a consequence of the formula
$\psi_s = D_j \psi_j$ and
the bounds on $\psi_x$ and $A_x$; therefore
a different approach would
be required to bound $\Psi$ in $F_k$.
The bounds on $\Psi(s), s > 0$ that can be readily obtained
are those in $L^4$. 
Owing to the fact that $\Psi(0) = 0$,
it turns out that under the assumption of energy dispersion, these bounds
come with an $\varepsilon$ gain.
Representing $A_t$ as
\begin{equation}
A_t(s) = - \int_0^\infty \Im(i \bar{\psi}_s \psi_s)(s^\prime) ds^\prime
- \int_0^\infty \Im(\bar{\Psi} \psi_s)(s^\prime) ds^\prime
\label{At sep}
\end{equation}
we show that the $L^2$ norm of the second integral is small,
enabling us to treat its total contribution to $A_t \psi$ using
Lemma \ref{L:VmBound}.

Now $\psi_s = D_j \psi_j$ and as already mentioned does 
enjoy bounds in the $F_k$-spaces.
The effect of the integration is cancel the derivatives that appear, and so
in principle the first integral in (\ref{At sep}) is on par with the cubic term
$\psi_\ell \Im(\bar{\psi}_\ell \psi_m)$.
However, any one of the terms in the cubic term may be placed in a $G_k$
space, whereas, for $s^\prime > 0$, we
cannot place $\psi_s(s^\prime)$ in $G_k$ spaces.
For this reason we further decompose $\psi_s$, representing it as
$\psi_s = \partial_\ell \psi + i A_\ell \psi_\ell$.
Rewriting the first integral in (\ref{At sep}) as
\begin{equation*}
- \int_0^\infty \lvert \psi_s \rvert^2 (s^\prime) ds^\prime
\end{equation*}
we see that
it suffices (by Young's inequality) to just consider the contributions from
$\lvert \partial_\ell \psi \rvert^2$ and from $\lvert A_\ell \psi_\ell \rvert^2$.
For the latter term, we pull out $A_\ell$ in $L^2$ (which is largest at $s = 0$), and control
$\int_0^\infty \lvert \psi_\ell \rvert^2 \lesssim_{E_0} 1$ using heat flow bounds; hence
this term has a contribution like that of $A_x^2 \psi$. 
For the $\lvert \partial_\ell \psi \rvert^2$ term,
we expand $\psi(s)$ using the Duhamel formula
as the sum of its linear evolution and nonlinear evolution. 
While we do not propagate bounds in the $G_k$-spaces along the nonlinear
heat flow, such bounds do in fact propagate along the linear flow;
the contribution from the three linear terms taken together
therefore is comparable to that of the cubic term.
The nonlinear evolution terms must be dealt with more delicately,
as here we must resort to placing these in $F_k$-spaces, resulting
in worse off-diagonal gains. The upshot, however, is that these terms come
with an energy-dispersion gain that is enough to offset the consequences of inferior decay.
We turn to the details.
\begin{lem}
It holds that
\begin{equation*}
\lVert
\int_0^\infty  (\overline{\Psi} \cdot D_\ell \psi_\ell)(s)\ds
 \rVert_{L_{t,x}^2} 
\lesssim
\varepsilon (1 + \sum_j b_j^2)^2 \sum_k b_k^2
\end{equation*}
\label{L:AtL4}
\end{lem}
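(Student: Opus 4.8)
The plan is to exploit the parabolic regularization of the heat flow together with the $L^4$ bounds on $\Psi$ and on $D_\ell\psi_\ell = \psi_s$, distributing the available decay in the heat variable $s$ between the two factors so that the $s$-integral converges. First I would note that by Corollary \ref{C:Comm L4} we have $\lVert P_k \Psi(s)\rVert_{L^4_{t,x}} \lesssim \varepsilon 2^k (1 + \sum_j b_j^2) b_k$, but this bound alone lacks decay in $s$; to gain it, I would instead use the Duhamel representation $\Psi(s) = \int_0^s e^{(s-r)\Delta}(U_t - iU_s)(r)\,dr$ from the proof of Corollary \ref{C:Comm L4}, and run the estimate \eqref{L:L4NonlinearPart} with the frequency-localization weight kept explicit. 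Concretely, I expect to be able to upgrade the bound to $\lVert P_k \Psi(s)\rVert_{L^4_{t,x}} \lesssim \varepsilon (1 + s2^{2k})^{-2} 2^k (1+\sum_j b_j^2) b_k$, with the same decay that \eqref{L:L4NonlinearPart} already carries. Simultaneously, \eqref{C:Psist L4} gives $\lVert P_k \psi_s(s)\rVert_{L^4_{t,x}} \lesssim (1+s2^{2k})^{-2} 2^k (1+\sum_j b_j^2) b_k$.

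Next I would decompose the product $\overline{\Psi}\cdot D_\ell\psi_\ell = \overline{\Psi}\cdot\psi_s$ via Littlewood–Paley, writing it as a sum over frequency-localized pieces $P_{k_1}\overline{\Psi}\, P_{k_2}\psi_s$, and apply Hölder ($L^4_{t,x}\cdot L^4_{t,x} \to L^2_{t,x}$) on each piece. The high-high interactions and the cases where one factor dominates are handled by the usual square-function/Cauchy–Schwarz bookkeeping that appears in the proof of Lemma \ref{L:VmBound}: summing the geometric off-diagonal factors produces $\lVert \overline{\Psi}(s)\cdot\psi_s(s)\rVert_{L^2_{t,x}} \lesssim \varepsilon (1+\sum_j b_j^2)^2 (\sum_k b_k^2)\cdot g(s)$, where $g(s)$ collects the $s$-decay. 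The key point is that the product of the two weights $(1+s2^{2k_1})^{-2}(1+s2^{2k_2})^{-2}$, after summing in the frequencies, yields an integrable function of $s$: at each diagonal frequency scale $2^{2k}$ the factor $(1+s2^{2k})^{-4}$ contributes $\int_0^\infty (1+s2^{2k})^{-4}\,ds \sim 2^{-2k}$, and the $2^{2k}$ arising from the two factors of $2^k$ in the $L^4$ bounds cancels this, so that the $s$-integral of each diagonal contribution is $O(1)$ uniformly in $k$. Summing the resulting $\ell^2$-type quantity in $k$ gives the stated bound.

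The main obstacle I anticipate is establishing the $s$-decay for $\Psi$, i.e.\ promoting Corollary \ref{C:Comm L4} to a bound with a $(1+s2^{2k})^{-2}$ factor. This requires rerunning the Duhamel estimate for $\Psi(s) = \int_0^s e^{(s-r)\Delta}(U_t-iU_s)(r)\,dr$ and checking that the nonlinear-part bound \eqref{L:L4NonlinearPart}, which already supplies this decay for $\int_0^s e^{(s-r)\Delta}P_k U_\alpha(r)\,dr$, applies with $\alpha \in \{0,3\}$ here (it does: $U_t - iU_s$ is a combination of $U_0$ and $U_3$). So in fact the decay is already available off the shelf from \eqref{L:L4NonlinearPart}, and the genuinely delicate step reduces to the frequency-summation bookkeeping: one must be careful that when $k_1$ and $k_2$ are far apart the weaker decay of the lower-frequency factor does not spoil convergence of the $s$-integral, which is why the off-diagonal geometric factors from the bilinear $N_k$ estimates \eqref{bilin1}–\eqref{bilin3} must be retained rather than discarded. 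Once the per-piece estimate $\lVert P_{k_1}\overline{\Psi}(s) \cdot P_{k_2}\psi_s(s)\rVert_{L^2_{t,x}} \lesssim \varepsilon 2^{k_1+k_2} (1+s2^{2k_1})^{-2}(1+s2^{2k_2})^{-2}(1+\sum_j b_j^2)^2 b_{k_1}b_{k_2}$ is in hand, integrating in $s$, summing the off-diagonal decay against the slowly-varying envelope inequalities \eqref{Sum 1}–\eqref{Sum 2}, and applying Cauchy–Schwarz in the diagonal frequency yields $\lesssim \varepsilon(1+\sum_j b_j^2)^2\sum_k b_k^2$, completing the proof.
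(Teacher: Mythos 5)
Your proposal reproduces the paper's proof: it upgrades the $L^4_{t,x}$ bound on $\Psi$ to carry the $(1+s2^{2k})^{-2}$ decay directly from \eqref{L:L4NonlinearPart}, pairs it with \eqref{C:Psist L4} via H\"older $L^4_{t,x}\times L^4_{t,x}\to L^2_{t,x}$ on a Littlewood--Paley decomposition of the product, and integrates in $s$ so that $\int_0^\infty(1+s2^{2k_1})^{-2}(1+s2^{2k_2})^{-2}\,ds\lesssim 2^{-2\max(k_1,k_2)}$ converts the $2^{k_1+k_2}$ from the two $L^4$ bounds into an off-diagonal weight $2^{-|k_1-k_2|}$ summable against the frequency envelopes. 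The only inessential slip is attributing that off-diagonal decay to the $N_k$ bilinear estimates \eqref{bilin1}--\eqref{bilin3}, which play no role in this pure $L^2_{t,x}$ estimate; the decay comes entirely from the $s$-integral together with the envelope summation rules \eqref{Sum 1}--\eqref{Sum 2}.
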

\begin{proof}
We first bound
$(\overline{\Psi} \cdot D_\ell \psi_\ell)(s)$ in $L^2$. 
Define
\begin{equation*}
\mu_k(s) := \sup_{k^\prime \in \mathbb{Z}} 2^{-\delta \lvert k - k^\prime \rvert}
\lVert P_k \Psi(s) \rVert_{L_{t,x}^4}
\quad \text{and} \quad
\nu_k(s) := \sup_{k^\prime \in \mathbb{Z}} 2^{-\delta \lvert k - k^\prime \rvert}
\lVert P_k (D_\ell \psi_\ell)(s) \rVert_{L_{t,x}^4}
\end{equation*}
Then
\begin{equation}
\lVert (\overline{\Psi} \cdot D_\ell \psi_\ell)(s) \rVert_{L_{t,x}^2} 
\lesssim
\sum_k \mu_k(s) \sum_{j \leq k} \nu_j(s) +
\sum_k \nu_k(s) \sum_{j \leq k} \mu_j(s)
\label{L2  Envelope Bound}
\end{equation}
From Corollary \ref{C:Comm L4} and from \eqref{C:Psist L4} it follows that
\begin{equation}
\mu_k(s) \lesssim \varepsilon (1 + s 2^{2k})^{-2} 2^k (1 + \sum_p b_p^2) b_k
\label{mu Bound}
\end{equation}
and
\begin{equation}
 \nu_k(s) \lesssim (1 + s 2^{2k})^{-2} 2^k (1 + \sum_p b_p^2) b_k
\label{nu Bound}
\end{equation}
Therefore
\begin{align*}
\int_0^\infty \lVert (\overline{\Psi} \cdot D_\ell \psi_\ell)(s) \rVert_{L_{t,x}^2} \ds
\lesssim& \;
\int_0^\infty \sum_k \mu_k(s) \sum_{j \leq k} \nu_j(s) ds \\
\lesssim&\; \varepsilon
(1 + \sum_p b_p^2)^2 \sum_k 2^k b_k
\sum_{j \leq k} 2^j b_j  \times \\
& \; \times
\int_0^\infty (1 + s 2^{2j})^{-2} (1 + s 2^{2k})^{-2} \ds
\end{align*}
Then
\begin{equation*}
\sum_{j \leq k} 2^j b_j
\int_0^\infty (1 + s 2^{2j})^{-2} (1 + s 2^{2k})^{-2} \ds
\lesssim
\int_0^\infty (1 + s 2^{2k})^{-2} \ds \sum_{j \leq k} 2^j b_j
\end{equation*}
and
\begin{equation*}
\int_0^\infty (1 + s 2^{2k})^{-2} \ds \lesssim 2^{-2k}
\end{equation*}
so that
\begin{equation*}
\sum_k 2^k b_k
\sum_{j \leq k} 2^j b_j
\int_0^\infty (1 + s 2^{2j})^{-2} (1 + s 2^{2k})^{-2} \ds
\lesssim
\sum_k b_k^2
\end{equation*}
\end{proof}
\begin{lem}
It holds that
\begin{equation*}
\lVert
\int_0^\infty \lvert A_\ell \psi_\ell \rvert^2(s) ds
\rVert_{L^2_{t,x}}
\lesssim
\sup_{j} b_j^2 \sum_{k} b_k^2
\end{equation*}
\end{lem}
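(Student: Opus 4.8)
The plan is to reduce the claim, via the pointwise Cauchy--Schwarz inequality $\lvert A_\ell \psi_\ell \rvert^2 \leq \lvert A_x \rvert^2 \lvert \psi_x \rvert^2$ (with $\lvert A_x\rvert^2 = \sum_\ell A_\ell^2$ and $\lvert \psi_x\rvert^2 = \sum_m \lvert\psi_m\rvert^2$), to a heat-time-integrated $L^4_{t,x}$ bound on the products $A_\ell\psi_m$. Since $\lvert A_x\rvert\lvert\psi_x\rvert \leq \sum_{\ell,m}\lvert A_\ell\psi_m\rvert$ and $\lVert g^2\rVert_{L^2_{t,x}} = \lVert g\rVert_{L^4_{t,x}}^2$, Minkowski's integral inequality gives
\begin{equation*}
\left\lVert \int_0^\infty \lvert A_\ell\psi_\ell\rvert^2(s)\ds \right\rVert_{L^2_{t,x}} \leq \int_0^\infty \left\lVert \lvert A_x(s)\rvert^2\lvert\psi_x(s)\rvert^2 \right\rVert_{L^2_{t,x}}\ds \lesssim \int_0^\infty \sum_{\ell,m}\lVert A_\ell\psi_m(s)\rVert_{L^4_{t,x}}^2\ds .
\end{equation*}

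For each fixed $s$ I would Littlewood--Paley decompose $A_\ell\psi_m = \sum_k P_k(A_\ell\psi_m)$, use the $L^4$ square-function estimate $\lVert\sum_k P_k h\rVert_{L^4_{t,x}}^2 \lesssim \sum_k \lVert P_k h\rVert_{L^4_{t,x}}^2$, and bound $\lVert P_k h\rVert_{L^4_{t,x}} \leq \lVert P_k h\rVert_{F_k(T)}$. Inserting the heat-flow product estimate (\ref{PAPsi FS}) with $\sigma = 0$, i.e. $\lVert P_k(A_\ell\psi_m(s))\rVert_{F_k(T)} \lesssim \varepsilon(1+s2^{2k})^{-3}(s2^{2k})^{-3/8}2^k b_k$, yields
\begin{equation*}
\sum_{\ell,m}\lVert A_\ell\psi_m(s)\rVert_{L^4_{t,x}}^2 \lesssim \varepsilon^2 \sum_k (1+s2^{2k})^{-6}(s2^{2k})^{-3/4}\, 2^{2k} b_k^2 .
\end{equation*}
After integrating in $s$ and substituting $u = s2^{2k}$, the factor $2^{2k}\int_0^\infty (1+u)^{-6}u^{-3/4}\,du$ is an absolute constant (the integrand is integrable both at $0$ and at $\infty$), so one obtains $\int_0^\infty\sum_{\ell,m}\lVert A_\ell\psi_m(s)\rVert_{L^4_{t,x}}^2\ds \lesssim \varepsilon^2\sum_k b_k^2$, which is the assertion since $\sup_j b_j^2 \leq \varepsilon^2$.

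The one delicate point is why the naive form of the heuristic behind this lemma --- ``pull $A_\ell$ out in $L^2$, where it is largest at $s=0$, absorb $\int_0^\infty\lvert\psi_x(s)\rvert^2\ds\lesssim_{E_0}1$ by (\ref{ek4}), and reduce to $\lVert A_x^2(0)\rVert_{L^2_{t,x}}\lesssim\sup_j b_j^2\sum_k b_k^2$ as in (\ref{Ax2 L2})'' --- cannot be carried out literally. One cannot majorize $\sup_{s\geq 0}\lvert A_x(s)\rvert$ pointwise by $\lvert A_x(0)\rvert$, since the integral representation (\ref{CC Integral Rep}) of $A_x(0)$ may exhibit cancellation; and bounding $\sup_s\lvert A_x(s)\rvert$ by $\int_0^\infty\lvert\psi_x\rvert\lvert\psi_s\rvert\ds$ merely feeds back, after $\psi_s=D_\ell\psi_\ell$, the quantity $\int_0^\infty\lvert A_\ell\psi_\ell\rvert^2\ds$ one started from, with an $E_0$-dependent (not small) constant, so it cannot be absorbed. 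Nor may $A_x$ or $\psi_x$ be placed in $L^\infty_x$, because of a low-frequency divergence for $A_x$ and a high-frequency divergence for $\psi_x$ near $s=0$. Working with the frequency-localized heat-flow estimate (\ref{PAPsi FS}) and the spacetime $L^4$ square function, as above, sidesteps all of this: the $(s2^{2k})^{-3/8}$ singularity at $s=0$ it introduces is harmless, being integrable in heat time after the rescaling $u=s2^{2k}$.
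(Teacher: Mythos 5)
Your proof takes a genuinely different route from the paper's, and the comparison is instructive, but two points deserve flagging.

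The paper does, in fact, run exactly the heuristic you argue ``cannot be carried out literally'': after Littlewood--Paley decomposing $\lvert A_\ell\rvert^2$, it bounds $\sup_{s\geq 0}\lvert P_j A_m(s)\rvert$ pointwise by $\int_0^\infty \lvert P_j(\overline{\psi_m}\,D_\ell\psi_\ell)(s')\rvert\,ds'$ (absolute values inside, so no appeal to cancellation in (\ref{CC Integral Rep})), places $\int_0^\infty\lvert\psi_x\rvert^2\,ds$ in $L^\infty_{t,x}$ via (\ref{ek4}), and invokes the $L^4$ bound of Lemma~\ref{L:Pk A4} for the frequency-localized $A$ factors. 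The crucial ingredient your critique overlooks is the frequency localization: the bound $\lVert\int_0^\infty\lvert P_j(\overline{\psi_m}D_\ell\psi_\ell)\rvert\,ds\rVert_{L^4}\lesssim b_j^2$ from Lemma~\ref{L:Pk A4} is already small and does not feed the quantity back into itself, so there is no circularity and no absorption problem. Your concerns about $L^\infty_x$ divergences are also moot: the paper never puts $A_x$ or $\psi_x$ in $L^\infty_x$. In short, your alternative proof is a genuine alternative, not a necessary repair.

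Your route (keeping the product $A_\ell\psi_m$ intact, using the spatial $L^4$ square function and (\ref{PAPsi FS}) with the rescaling $u=s2^{2k}$) is cleaner in some ways, but note it yields only $\varepsilon^2\sum_k b_k^2$, which is \emph{weaker} than the lemma's claim $\sup_j b_j^2\sum_k b_k^2$: since $b_j\leq\varepsilon$ we have $\sup_j b_j^2\sum_k b_k^2\leq\varepsilon^2\sum_k b_k^2$, so your closing justification ``which is the assertion since $\sup_j b_j^2\leq\varepsilon^2$'' reads the inequality in the wrong direction. The weaker bound $\varepsilon^2\sum_k b_k^2\lesssim\varepsilon^{3/2}$ is still perfectly adequate for the downstream application (perturbativity of $A_t\psi_m$), and one expects the $\varepsilon$ in (\ref{PAPsi FS}) to really encode $\sup_j b_j$, which would restore the precise statement; but as written your argument does not literally deliver the stated inequality, and you should say so rather than assert equivalence.
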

\begin{proof}
Start by taking a Littlewood-Paley decomposition
of $\lvert A_\ell \rvert^2$:
\begin{equation*}
\int_0^\infty \lvert A_\ell \psi_\ell \rvert^2(s) ds
=
\int_0^\infty 
\sum_{j, k}
P_j A_\ell(s) \cdot  P_k \overline{A}_\ell(s) \cdot
\lvert \psi_x \rvert^2(s) ds
\end{equation*}
We bound this expression in absolute value by
\begin{equation*}
\sum_{j, k} \sup_{s \geq 0}
\lvert P_j A_x (s) \rvert \cdot \sup_{s^\prime \geq 0}
\lvert P_k A_x (s^\prime) \rvert
\cdot
\int_0^\infty \lvert \psi_x \rvert^2(r) dr
\end{equation*}
Next we take the $L^2_{t,x}$ norm, which we control
by placing the integral in $L^\infty_{t,x}$ and the
summation in $L^2_{t,x}$.
We control the summation by
\begin{equation*}
\sum_{j, k} 
\lVert 
\sup_{s \geq 0}
\lvert P_j A_x (s) \rvert \cdot \sup_{s^\prime \geq 0}
\lvert P_k A_x (s^\prime) \rvert
\rVert_{L^2_{t,x}}
\lesssim
\sum_j \lVert \sup_{s \geq 0} \lvert P_j A_x(s) \rvert \rVert_{L^4_{t,x}}^2
\end{equation*}
Now
\begin{equation*}
\sup_{s \geq 0} \lvert P_j A_m(s) \rvert
\lesssim
\sup_{s \geq 0}
\int_s^\infty \lvert P_j (\overline{\psi_m(s^\prime)}
D_\ell \psi_\ell(s^\prime)) \rvert ds^\prime
\leq
\int_0^\infty
\lvert P_j (\overline{\psi_m(s^\prime)}
D_\ell \psi_\ell(s^\prime)) \rvert ds^\prime
\end{equation*}
and in view of the proof of Lemma \ref{L:Pk A4},
the $L^2$ norm of the right hand side
is bounded by $b_j^2$.

Thus it remains to show
\begin{equation*}
\lVert 
\int_0^\infty \lvert \psi_\ell \rvert^2(s) ds
\rVert_{L^\infty_{t,x}}
\lesssim 1
\end{equation*}
For fixed $s, x, t$, however, $\psi_\ell$ is simply
the representation of $\partial_\ell \phi$ with
respect to an orthonormal basis of
$T_{\phi(s, x, t)} \mathbb{S}^2$. Therefore
$\lvert \psi_\ell \rvert = \lvert \partial_\ell \phi \rvert$
and so we may invoke (the uniform in time)
estimate (\ref{ek4}).
\end{proof}
\begin{lem}
It holds that
\begin{equation}
P_k\left( \int_0^\infty \lvert \partial_\ell \psi_\ell \rvert^2(s^\prime) ds^\prime
\psi_m(0) \right) \lesssim \varepsilon b_k
\label{dpdpp}
\end{equation}
\end{lem}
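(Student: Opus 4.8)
The plan is to move the heat-time integral outside the $N_k(T)$-norm, Littlewood--Paley decompose, and split $\partial_\ell\psi_\ell(s')$ into its linear and nonlinear heat evolutions. Applying Duhamel to \eqref{genPsiEQ} gives, for each $\ell$,
\[
\partial_\ell\psi_\ell(s') \;=\; L_\ell(s') + \mathcal{R}_\ell(s'), \qquad
L_\ell(s') := e^{s'\Delta}\partial_\ell\psi_\ell(0), \qquad
\mathcal{R}_\ell(s') := \partial_\ell \int_0^{s'} e^{(s'-r)\Delta} U_\ell(r)\,dr .
\]
Expanding $\lvert\partial_\ell\psi_\ell\rvert^2 = \lvert L\rvert^2 + 2\,\Re(L\overline{\mathcal R}) + \lvert\mathcal R\rvert^2$, decomposing the two quadratic factors into frequencies $2^{j_1},2^{j_2}$ and $\psi_m(0)$ into frequency $2^{k_3}$, and using Minkowski to pull $\int_0^\infty(\cdot)\,ds'$ and $\sum_{j_1,j_2,k_3}$ outside, it suffices to bound, for each $s'$ and each frequency triple, a trilinear expression $\lVert P_k(P_{j_1}f_1\,P_{j_2}f_2\,P_{k_3}\psi_m(0))\rVert_{N_k(T)}$ by the trilinear estimates of \S\ref{S:Function spaces} and then sum. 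In each of $f_1,f_2$ the derivative $\partial_\ell$ costs $2^{j_i}$, which is paid for by the heat-time decay: $\int_0^\infty(1+s'2^{2j_1})^{-N}(1+s'2^{2j_2})^{-N}\,ds'\lesssim 2^{-2\max(j_1,j_2)}$ for $N$ large, so the net frequency weight produced after $s'$-integration is $2^{j_1+j_2-2\max(j_1,j_2)} = 2^{-\lvert j_1-j_2\rvert}\le 1$.

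The purely linear term $\lvert L\rvert^2$ is handled exactly like the cubic term $\psi_\ell\Im(\bar\psi_\ell\psi_m)$. Since $L_\ell$ is a \emph{linear} heat evolution, $P_jL_\ell(s')$ stays in $G_j(T)$ with $\lVert P_jL_\ell(s')\rVert_{G_j(T)}\lesssim 2^j(1+s'2^{2j})^{-N}b_j$ (the $G_j(T)$-norm is not enlarged by the spatial mollifier $e^{s'\Delta}P_j$, whose convolution kernel has $L^1_x$-mass $\lesssim(1+s'2^{2j})^{-N}$), while $\lVert P_{k_3}\psi_m(0)\rVert_{G_{k_3}(T)}\lesssim b_{k_3}$. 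Thus Lemma~\ref{L:GGGTrilin} applies, and after $s'$-integration Corollary~\ref{C:GGG} sums the result to $\lesssim b_k^3$. As $b_k\le\varepsilon$, this is $\lesssim\varepsilon^2 b_k$, which is perturbative with room to spare.

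For the terms containing $\mathcal R_\ell$ we no longer have $G_k$-control along the nonlinear heat flow, so each such factor is placed in $F_k(T)$ via $\lVert P_j\mathcal R_\ell(s')\rVert_{F_j(T)}\lesssim\varepsilon\,2^j(1+s'2^{2j})^{-4}b_j$, which is \eqref{UFk} composed with $\partial_\ell$. For the cross term $L\overline{\mathcal R}$ we use the $FFG$ estimate Lemma~\ref{L:Trilin1} with $\mathcal R_\ell$ and $\psi_m(0)$ (the latter via $G_{k_3}\hookrightarrow F_{k_3}$) as the two $F$-factors and $L_\ell\in G_{j_1}$; here the $\varepsilon$ from $\mathcal R_\ell$, combined with the off-diagonal decay of $C_{k,\cdot,\cdot,\cdot}$ and of the heat-time integral, lets one sum (via Corollaries~\ref{C:MTE1} and~\ref{C:MTE2}, or by direct dyadic summation using that on the relevant $Z_2(k)$-piece $L_\ell$ is forced to the low frequency so $2^{-\lvert j_1-j_2\rvert}$ is genuine decay) to $\lesssim\varepsilon b_k^3\lesssim\varepsilon b_k$. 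The term $\lvert\mathcal R\rvert^2$ has both quadratic factors in $F$ and $\psi_m(0)\in G$, so each $\mathcal R_\ell$ contributes an $\varepsilon$, for an overall gain $\varepsilon^2$. The main obstacle is precisely this $\lvert\mathcal R\rvert^2$ contribution on the high--high configuration $Z_2(k)$ of \eqref{Z def}: there the two $\mathcal R_\ell$-factors sit at comparable high frequencies $2^{j_1}\sim 2^{j_2}$ while $k,k_3$ are low, so the heat-time integral gives \emph{no} off-diagonal gain between $j_1$ and $j_2$, and Lemma~\ref{L:Trilin1} supplies on $Z_2(k)$ only the single decay $2^{-\lvert k-k_3\rvert/6}$; summing in $j_1\sim j_2$ then yields $\sum_j b_j^2=\lVert b\rVert_{\ell^2}^2$ rather than a further power of $b_k$, which is not by itself small. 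This is rescued by the two extracted powers of $\varepsilon$: $\varepsilon^2\lVert b\rVert_{\ell^2}^2 = \varepsilon^{3/2}\bigl(\varepsilon^{1/2}\sum_j b_j^2\bigr)\ll\varepsilon^{3/2}\ll\varepsilon$, using the standing hypothesis $\varepsilon^{1/2}\sum_j b_j^2\ll1$, so this contribution is $\lesssim\varepsilon b_k$ as well. Assembling the three cases proves \eqref{dpdpp}.
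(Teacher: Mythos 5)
Your proof is correct and follows essentially the paper's route: Duhamel on the covariant heat equation for $\psi_\ell$, the $GGG$ trilinear estimate (Lemma~\ref{L:GGGTrilin}, Corollary~\ref{C:GGG}) for the purely linear part, and $FFG$ bounds (Lemma~\ref{L:Trilin1}, Corollary~\ref{C:MTE1}) with the $\varepsilon$-gain from (\ref{UFk}) for the pieces containing a nonlinear Duhamel factor, closing the $Z_2(k)$ sum via the standing smallness hypothesis on $\varepsilon^{1/2}\sum_j b_j^2$. Your choice to place $L$ rather than $\psi_m(0)$ in the $G$-slot for the cross term is a slight refinement over the paper (which places $\psi_m(0)$ in $G$ for all the non-purely-linear contributions and accepts the resulting $\varepsilon\, b_k \sum_j b_j^2$ on $Z_2(k)$), but both variants close in the same way.
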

\begin{proof}
We write
\begin{equation*}
\psi_\ell(s) = e^{s \Delta} \psi_\ell(0) + \int_0^s e^{(s - r)\Delta} U_\ell(r) dr
\end{equation*}
and expand (\ref{dpdpp}) accordingly.
Additionally, we perform a Littlewood-Paley decomposition of each term.
The trilinear term
\begin{equation*}
P_k \left( \sum_{k_1, k_2, k_3}
\int_0^\infty e^{s \Delta}P_{k_1} \partial_\ell \psi_\ell(0)
\cdot
e^{s \Delta} P_{k_2} \partial_\ell \psi_\ell(0) \;ds \cdot P_{k_3} \psi_m(0) \right)
\end{equation*}
we bound in $N_k(T)$ by
\begin{equation*}
\begin{split}
\sum_{\substack{ (k_1, k_2, k_3) \in \\ Z_1(k) \cup Z_2(k) \cup Z_3(k)}}
& C_{k,k_1,k_2,k_3} 2^{k_1} \lVert P_{k_1} \psi_x(0) \rVert_{G_{k_1}(T)}
2^{k_2} \lVert P_{k_2} \psi_x(0) \rVert_{G_{k_2}(T)} \times \\
& \times \lVert P_{k_3} \psi_m(0) \rVert_{G_{k_3}(T)} 
\int_0^\infty (1 + s 2^{2k_1})^{-20} (1 + s 2^{2k_2})^{-20} ds
\end{split}
\end{equation*}
where $C_{k,k_1,k_2,k_3}$ is as in Lemma \ref{L:GGGTrilin}
and $Z_1, Z_2, Z_3$ are given by (\ref{Z def}).
As
\begin{equation*}
2^{k_1} 2^{k_2} \int_0^\infty (1 + s 2^{2k_1})^{-20} (1 + s 2^{2k_2})^{-20} ds
\lesssim 1
\end{equation*}
it suffices to control
\begin{equation}
\begin{split}
\sum_{\substack{ (k_1, k_2, k_3) \in \\ Z_1(k) \cup Z_2(k) \cup Z_3(k)}}
&C_{k,k_1,k_2,k_3}
\lVert P_{k_1} \psi_x(0) \rVert_{G_{k_1}(T)} \times \\
&\times \lVert P_{k_2} \psi_x(0) \rVert_{G_{k_2}(T)}
\lVert P_{k_3} \psi_m(0) \rVert_{G_{k_3}(T)}
\end{split}
\label{ppp}
\end{equation}
We invoke Corollary \ref{C:GGG} to bound (\ref{ppp}) by $b_k^3$,
which suffices for this term in view of the energy dispersion assumption.

Next we must consider products involving either one or two
$\int_0^s e^{(s - r)\Delta} U_\ell(r) dr$ terms.
The arguments are similar to those of the case already considered.
Here, however, we only place
$P_{k_3} \psi_m(0)$ in a $G_k$ space; the remaining two
terms we control in $F_k$ spaces using either
(\ref{SSV1}) or (\ref{UFk}).
Hence in our $N_k(T)$ bound we use $C_{k,k_1,k_2,k_3}$ as in
Lemma \ref{L:Trilin1} rather than Lemma \ref{L:GGGTrilin},
and to sum we must use Corollary \ref{C:MTE1} instead of
Corollary \ref{C:GGG}.
Corollary \ref{C:MTE1}, however, only supplies the bound
over $\mathbb{Z}^3 \setminus Z_2(k)$.
On $Z_2$ the sum is controlled by
\begin{align*}
\varepsilon \sum_{(k_1, k_2, k_3) \in Z_2(k)} 2^{- \lvert k - k_3 \rvert / 6} b_{k_1} b_{k_2} b_{k_3}
&\lesssim
\varepsilon  \sum_{k, k_3 \leq k_1 - 40} 2^{- \lvert k - k_3 \rvert / 6} b_{k_1} b_{k_1} b_{k_3} \\
&\lesssim
\varepsilon  b_k
\sum_{k, k_3 \leq k_1 - 40}  2^{-\lvert k - k_3 \rvert / 6} 2^{\delta \lvert k - k_3 \rvert} b_{k_1}^2 \\
&\lesssim
\varepsilon b_k \sum_{k_1 \geq k + 40} b_{k_1}^2
\end{align*}
\end{proof}

\section{The magnetic potential}
\label{SS:DMP}

We begin by introducing a paradifferential decomposition of the magnetic nonlinearity,
splitting it into two pieces. 
This decomposition depends upon a frequency
parameter $k \in \mathbb{Z}$, which we suppress in the notation;
this frequency $k$ is the output frequency.
The decomposition also depends upon a universal frequency gap parameter $\varpi \in \mathbb{Z}_+$
that need only be taken sufficiently large (see \cite[\S 5]{Sm10} for discussion).

Define
\[
\begin{split}
A_{m, \mathrm{lo \wedge lo}}(s)
&:=
-\sum_{k_1, k_2 \leq k - \varpi}
\int_s^\infty \Im( \overline{P_{k_1} \psi_m} P_{k_2} \psi_s )(s^\prime) ds^\prime 
\\
A_{m, \mathrm{hi \lor hi}}(s)
&:=
-\sum_{\max\{k_1, k_2\} > k - \varpi}
\int_s^\infty \Im( \overline{P_{k_1} \psi_m} P_{k_2} \psi_s )(s^\prime) ds^\prime
\end{split}
\]
so that
$A_m = A_{m, \mathrm{lo \wedge lo}} + A_{m, \mathrm{hi \lor hi}}$.
Similarly define
\[
\begin{split}
B_{m, \mathrm{lo \wedge lo}} 
&:= - i \sum_{k_3} \left( \partial_\ell( A_{\ell, \mathrm{lo \wedge lo}} P_{k_3} \psi_m)
+
A_{\ell, \mathrm{lo \wedge lo}} \partial_\ell P_{k_3} \psi_m \right)
\\
B_{m, \mathrm{hi \lor hi}} 
&:= - i \sum_{k_3} \left( \partial_\ell( A_{\ell, \mathrm{hi \lor hi}} P_{k_3} \psi_m)
+
A_{\ell, \mathrm{hi \lor hi}} \partial_\ell P_{k_3} \psi_m \right)
\end{split}
\]
so that
$B_m = B_{m, \mathrm{lo \wedge lo}} + B_{m, \mathrm{hi \lor hi}}$.

Our goal is to control $P_k B_m$ in $N_k(T)$.
We consider first $P_k B_{m, \mathrm{hi \lor hi}}$,
performing a Littlewood-Paley decomposition.
In order for frequencies $k_1, k_2, k_3$ to have an output in
this expression at a frequency $k$, we must have
$(k_1, k_2, k_3) \in Z_2(k) \cup Z_3(k) \cup Z_0(k)$,
where $Z_0(k) := Z_1(k) \cap \{ (k_1, k_2, k_3) \in \mathbb{Z}^3 :
k_1, k_2 > k - \varpi \}$ and $Z_2, Z_3$ are given by (\ref{Z def}).
We apply Lemma \ref{L:Trilin1} to bound
$P_k B_{m, \mathrm{hi \lor hi}}$ in $N_k(T)$ by
\begin{equation*}
\begin{split}
\sum_{\substack{ (k_1,k_2,k_3) \in \\ Z_2(k) \cup Z_3(k)
\cup Z_0(k)}}
& \int_0^\infty
2^{\max\{k, k_3\}} C_{k,k_1,k_2,k_3} \lVert P_{k_1} \psi_x(s) \rVert_{F_{k_1}} \times \\
& \times \lVert P_{k_2} (D_\ell \psi_\ell(s)) \rVert_{F_{k_2}}
 \lVert P_{k_3} \psi_m(0) \rVert_{G_{k_3}} ds
\end{split}
\end{equation*}
which, thanks to (\ref{SSV1}) and (\ref{PAPsi FS}), is
controlled by
\begin{equation*}
\begin{split}
\sum_{\substack{ (k_1,k_2,k_3) \in \\ Z_2(k) \cup Z_3(k)
\cup Z_0(k)}}
&2^{\max\{k, k_3\}} C_{k,k_1,k_2,k_3} b_{k_1} b_{k_2} b_{k_3} \times \\
& \times \int_0^\infty
(1 + s2^{2k_1})^{-4} 2^{k_2} (s 2^{2k_2})^{-3/8}
(1 + s2^{2k_2})^{-2}
ds
\end{split}
\end{equation*}
As
\begin{equation}
\int_0^\infty
(1 + s2^{2k_1})^{-4} 2^{k_2} (s 2^{2k_2})^{-3/8}
(1 + s2^{2k_2})^{-2}
ds
\lesssim
2^{-\max \{k_1, k_2\}}
\label{Ik1k2}
\end{equation}
we reduce to
\begin{equation*}
\sum_{\substack{ (k_1,k_2,k_3) \in \\ Z_2(k) \cup Z_3(k)
\cup Z_0(k)}}
2^{\max\{k, k_3\} - \max\{k_1,k_2\}} 
C_{k,k_1,k_2,k_3}
b_{k_1} b_{k_2} b_{k_3}
\end{equation*}

To estimate $P_k B_{m, \mathrm{hi \lor hi}}$ on $Z_2 \cup Z_3$,
we apply Corollary \ref{C:MTE2} and use the energy dispersion
hypothesis.
As for $Z_0(k)$, we note that its cardinality $\lvert Z_0(k) \rvert$
satisfies $\lvert Z_0(k) \rvert \lesssim \varpi$ independently
of $k$. Hence for fixed $\varpi$ summing over this set
is harmless given sufficient energy dispersion.

The last term to consider is $P_k B_{m, \mathrm{lo \wedge lo}}$.
We cannot control this term directly in $N_k$, but instead can
get a handle on it via the following bilinear estimate from
\cite[\S 5]{Sm10}.
\begin{thm}
Let $\sigma \in [0, \sigma_1 - 1]$, $s \geq 0$, and $2^{j - k} \ll1$.  
Suppose that $V_m$ and $B_{m, \mathrm{hi \lor hi}}$
are perturbative and that this can be established
using only mixed $L^p$ spaces with all $p$ lying in a compact
subset of $(1, \infty)$, e.g., the local smoothing and maximal
function spaces are excluded.
Then
\begin{align}
2^{k - j} (1 + s 2^{2j})^{8}
\lVert P_j \psi_\ell(s) \cdot P_k \psi_m(0) \rVert_{L^2_{t,x}}^2
\lesssim 2^{-2 \sigma k}
c_j^2 c_k^2(\sigma) + 
\varepsilon^2
b_j^2 b_k^2(\sigma)
\label{BSs}
\end{align}
\label{T:BilinearStrichartz}
\end{thm}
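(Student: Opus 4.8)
The plan is to establish the equivalent square-root inequality
\begin{equation*}
\lVert P_j \psi_\ell(s) \cdot P_k \psi_m(0) \rVert_{L^2_{t,x}}
\lesssim
2^{(j-k)/2} (1 + s 2^{2j})^{-4}
\big( 2^{-\sigma k} c_j c_k(\sigma) + \varepsilon b_j b_k(\sigma) \big)
\end{equation*}
by expanding both factors with Duhamel's formula. Since $P_k \psi_m(0)$ is a frequency projection of a solution of the magnetic Schr\"odinger equation \eqref{NLS}, Duhamel in $t$ represents it as the free Schr\"odinger evolution of its $t=0$ data plus $\int_0^t e^{i(t-t')\Delta} P_k \cN_m \, dt'$. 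The low-frequency factor $P_j \psi_\ell(s)$ is expanded first in the heat variable via \eqref{genPsiEQ} as $e^{s\Delta} P_j \psi_\ell(0) + \int_0^s e^{(s-r)\Delta} P_j U_\ell(r)\,dr$, after which $P_j\psi_\ell(0)$ is expanded in $t$ in the same way. Multiplying out, $P_j \psi_\ell(s)\cdot P_k\psi_m(0)$ becomes a main term built from two free Schr\"odinger evolutions (with an additional heat factor $e^{s\Delta}$ on the low piece), heat-time error terms carrying the factor $\int_0^s e^{(s-r)\Delta}P_j U_\ell\,dr$, and Schr\"odinger-time error terms carrying $\int_0^t e^{i(t-t')\Delta}P_k\cN_m\,dt'$.

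For the main term we must estimate $\lVert e^{s\Delta}\big((e^{it\Delta}P_j f)(e^{it\Delta}P_k g)\big)\rVert_{L^2_{t,x}}$, with $f,g$ the data of $\psi_\ell,\psi_m$ at $s=t=0$. As $e^{s\Delta}$ is $L^2$-bounded and the product is localized near frequency $2^k$, it contributes $(1+s2^{2k})^{-N}\le(1+s2^{2j})^{-N}$, while the bilinear product of two free evolutions is controlled by the classical bilinear Strichartz estimate (Bourgain / Ozawa--Tsutsumi), which gains the factor $2^{(j-k)/2}$ from the transversality of the two Fourier supports. Since \eqref{c cons} gives $\lVert P_j f\rVert_{L^2}\lesssim c_j$ and $\lVert P_k g\rVert_{L^2}\lesssim 2^{-\sigma k}c_k(\sigma)$, the main term obeys the first term of the right-hand side.

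All the error terms should carry an $\varepsilon$ gain and land in the $\varepsilon b_j b_k(\sigma)$ term. For the heat-time errors, the factor $\int_0^s e^{(s-r)\Delta}P_j U_\ell\,dr$ is controlled in $F_j$ (hence in $L^4_{t,x}$) by \eqref{UFk} with an $\varepsilon$ and the decay $(1+s2^{2j})^{-4}$, while the companion factor is a frequency projection of a Schr\"odinger evolution controlled in $G_k$ by $b_k(\sigma)$; pairing the two with the bilinear $L^2_{t,x}$ estimates \eqref{bilin5}/\eqref{bilin5 improv} reinstates the transversality factor $2^{(j-k)/2}$, and bookkeeping of the envelope weights gives the claimed bound (the terms with both Duhamel corrections are even better). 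For the Schr\"odinger-time errors we split $\cN_m = \Blo + \Bhi + V_m$ via \eqref{B Def}, \eqref{V Def}, and the paradifferential decomposition of \S\ref{SS:DMP}. For $\Bhi$ and $V_m$ the hypothesis that these are perturbative --- \emph{established using only mixed $L^p$ norms with all $p$ in a compact subset of $(1,\infty)$} --- is precisely what the argument needs: by Proposition \ref{MainLinearEstimate} the Duhamel integral $\int_0^t e^{i(t-t')\Delta}P_k(\Bhi+V_m)\,dt'$ is then $O(\varepsilon b_k)$ in $G_k$, and because only genuine Strichartz-type norms were consumed this control interpolates with the bilinear estimates of Lemmas \ref{L:L2Bilinear} and \ref{L:L3Bilinear}, producing simultaneously the $\varepsilon$ and the transversality gain; had local-smoothing or maximal-function norms been needed, the pairing against the heat-time-evolved low-frequency factor, which is not controlled in those norms, would fail.

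The main obstacle is the non-perturbative term $\Blo$, since $\int_0^t e^{i(t-t')\Delta}P_k\Blo\,dt'$ is \emph{not} small in $N_k$. Here one uses the divergence structure of \eqref{B Def}: writing $\Alol\partial_\ell P_{k_3}\psi_m = \partial_\ell(\Alol P_{k_3}\psi_m) - (\partial_\ell \Alol)P_{k_3}\psi_m$ and commuting $\partial_\ell$ outside the Duhamel integral, the full contribution of $\Blo$ reduces to terms in which the surviving derivative falls on the \emph{low}-frequency factor $P_j\psi_\ell(s)$ --- costing only $2^j$, not $2^k$ --- together with terms carrying an extra low-frequency connection $\partial_\ell\Alol$ and no derivative loss. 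In both cases the resulting expression is again of the shape (low-frequency connection) $\times$ (bilinear product of a high $P_k\psi$ and a low $P_j\psi$), so the quantity under study reappears; one therefore closes by a continuity argument in $T'\in(0,T]$, letting $\mathcal{M}(T')$ denote the best constant in the desired inequality over $[-T',T']$, observing $\lim_{T'\to0}\mathcal{M}(T')\lesssim1$, and showing the $\Blo$ contribution is at most a small multiple (for $\varepsilon_0$ small) of $\mathcal{M}(T')$ using the off-diagonal decay supplied by the frequency gap $\varpi$ and the smallness of the low-frequency connection in $L^4_{t,x}$ --- via $\lVert P_p A_x(0)\rVert_{L^4_{t,x}}\lesssim b_p^2$ from Lemma \ref{L:Pk A4} together with \eqref{A_x Bound} --- so that this term is absorbed and $\mathcal{M}(T')\lesssim1$ follows. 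The case $\sigma\in(0,\sigma_1-1]$ is obtained by propagating the weighted envelopes $c_k(\sigma),b_k(\sigma)$ through each step.
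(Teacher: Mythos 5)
The paper does not contain a proof of this theorem — it is quoted verbatim from \cite[\S 5]{Sm10} — so there is no in-text proof to compare against; but your proposal has a genuine gap at exactly the crucial point, the non-perturbative magnetic source $\Blo$. The claim that commuting $\partial_\ell$ outside the Duhamel integral makes "the surviving derivative fall on the low-frequency factor $P_j\psi_\ell(s)$" is not correct. In $\lVert P_j\psi_\ell(s)\cdot\int_0^te^{i(t-t')\Delta}P_k\Blo\,dt'\rVert_{L^2_{t,x}}$, the derivative still acts on the second (frequency-$2^k$) factor after being pulled past $e^{i(t-t')\Delta}$; there is no integration by parts available inside a bilinear $L^2$ norm $\int|fg|^2\,dx\,dt$ that transfers $\partial_\ell$ to the other factor. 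That manoeuvre works in the $N_k$-dual pairing used in \S\ref{S:Main}, not here. The continuity argument you then propose to absorb $\Blo$ is circular and fails quantitatively: the only handle on the Duhamel contribution of $\Blo$ is $\lVert P_k\Blo\rVert_{N_k(T)}$, whose bound $\Nlo\lesssim\sum_{p\le k-\varpi}(b_pc_pc_k+\varepsilon b_p^2b_k)$ in \S\ref{S:Main} is itself obtained by invoking Theorem \ref{T:BilinearStrichartz}. Feeding that back through \eqref{bilin5 improv} gives a $\Blo$ contribution with leading piece $\sim 2^{(j-k)/2}b_jc_k\sum_pb_pc_p$; compared against the target $2^{(j-k)/2}c_jc_k$ via the bootstrap $b_j\le\varepsilon_0^{-1/10}c_j$, this \emph{loses} a factor $\varepsilon_0^{-1/10}$ rather than gaining a small one, so the absorption cannot close.

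The correct route is to keep $\Blo$ in the principal part rather than Duhamel it out: modulo perturbative terms, $P_k\psi_m$ solves the magnetic Schr\"odinger equation $(i\partial_t+\Delta-2iA^{\rm lo}_\ell\partial_\ell)u=F$ with a potential $A^{\rm lo}_\ell$ that is slowly varying at scale $2^{-k}$, and one must prove a bilinear Strichartz estimate adapted directly to this operator (e.g.\ by frequency/spatially-localized boosts exploiting that near-constancy), with the perturbative sources $V_m$ and $\Bhi$ entering as forcing. That structure is also the real reason for the curious hypothesis restricting the perturbative estimates to mixed $L^p$ with $p$ in a compact subinterval of $(1,\infty)$: the conjugations used there respect such norms uniformly but not the frame- and velocity-adapted local smoothing / maximal function spaces. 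The heuristic you offer for the hypothesis (pairing against the heat-evolved low-frequency factor) is not where the constraint actually bites. Two smaller slips worth fixing: $e^{s\Delta}$ acts only on the low-frequency factor, so the main term is $(e^{s\Delta}e^{it\Delta}P_jf)\cdot(e^{it\Delta}P_kg)$ and the decay it supplies is $(1+s2^{2j})^{-N}$; your chain $(1+s2^{2k})^{-N}\le(1+s2^{2j})^{-N}$ is reversed for $j\le k$.
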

We show how to apply this theorem in the next section.

\section{Proofs of the main theorems} \label{S:Main}

We have in place all of the estimates needed to prove (\ref{Goal})
and hence Theorem \ref{T:EnvelopeTheorem}.

Using the main linear estimate of Proposition \ref{MainLinearEstimate}
and the decomposition introduced in \S \ref{SS:DMP},
we write
\begin{equation}
\begin{split}
\lVert P_k \psi_m \rVert_{G_k(T)}
\lesssim& \;
\lVert P_k \psi_m(0) \rVert_{L^2_x} +
\lVert P_k V_m \rVert_{N_k(T)} \\
&\; + \lVert P_k \Bhi \rVert_{N_k(T)} +
\lVert P_k \Blo \rVert_{N_k(T)}
\end{split}
\end{equation}
It was shown in the two preceding sections that $P_k V_m$
and $P_k \Bhi$ are perturbative in the sense that
\begin{equation*}
\lVert P_k V_m \rVert_{N_k(T)}
+
\lVert P_k \Bhi \rVert_{N_k(T)}
\lesssim \varepsilon b_k
\end{equation*}
where $\varepsilon > 0$ is assumed to satisfy $b_k \leq \varepsilon$
and $\varepsilon^{1/2} \sum_j b_j^2 \ll 1$.
In view of (\ref{iED}) and the bootstrap hypothesis (\ref{Main Bootstrap}),
we set $\varepsilon := \varepsilon_0^{9/10}$. Clearly this satisfies
$b_k \leq \varepsilon$. Moreover,
\begin{equation*}
\varepsilon^{1/2} \sum_j b_j^2
\leq \varepsilon_0^{3/10} \sum_j c_j^2
\lesssim_{E_0} \varepsilon_0^{3/10}
\end{equation*}
To handle $P_k \Blo$, we first rewrite it as
\begin{equation*}
P_k \Blo
=
- i \partial_\ell( \Alol P_k \psi_m)
+ R
\end{equation*}
where $R$ is a perturbative remainder (see \cite[\S 5]{Sm10} for details).
Therefore
\begin{equation}
\lVert P_k \psi_m \rVert_{G_k(T)}
\lesssim
c_k + \varepsilon b_k
+ \lVert \partial_\ell( \Alol P_k \psi_m) \rVert_{N_k(T)}
\label{psi G preliminary}
\end{equation}
Thus it remains to control
$- i \partial_\ell( \Alol P_k \psi_m)$,
which we expand as
\begin{equation}
\begin{split}
- i P_k \partial_\ell \sum_{ \substack{k_1, k_2 \leq k - \varpi \\ \lvert k_3 - k \rvert \leq 4}}
\int_0^\infty \Im(\overline{P_{k_1} \psi_\ell} P_{k_2} \psi_s)(s^\prime)
P_{k_3} \psi_m(0) ds^\prime
\end{split}
\label{lo expanded}
\end{equation}
and whose $N_k(T)$ norm we denote by $\Nlo$.
The key now is to apply Theorem \ref{T:BilinearStrichartz}
to $\overline{P_{k_1} \psi_\ell}(s^\prime)$ and $P_{k_3} \psi_m(0)$,
after first placing all of (\ref{lo expanded}) in $N_k(T)$ using
(\ref{bilin2 improv}).
We obtain
\begin{equation*}
\begin{split}
\Nlo
&\lesssim \; 
2^k \sum_{ \substack{k_1, k_2 \leq k - \varpi \\ \lvert k_3 - k \rvert \leq 4}}
2^{- \lvert k - k_2 \rvert / 2} 2^{- \lvert k_1 - k_3 \rvert / 2} 2^{-\max\{k_1, k_2\}} b_{k_2}
\left(c_{k_1} c_{k_3} + \varepsilon b_{k_1} b_{k_3} \right) 
\\
&\lesssim \;
2^k \sum_{k_1, k_2 \leq k - \varpi} 2^{(k_1 + k_2)/2 - k} 2^{-\max\{k_1, k_2\}}
b_{k_2}(c_{k_1} c_k + \varepsilon b_{k_1} b_k)
\end{split}
\end{equation*}
Without loss of generality we restrict the sum to $k_1 \leq k_2$:
\begin{equation*}
\sum_{k_1 \leq k_2 \leq k - \varpi} 2^{(k_1 - k_2)/2} b_{k_2}(c_{k_1} c_k + \varepsilon b_{k_1} b_k)
\end{equation*}
Using the frequency envelope property to sum off the diagonal, we reduce to
\begin{equation*}
\Nlo
\lesssim
\sum_{j \leq k - \varpi} (b_j c_j c_k + \varepsilon b_j^2 b_k)
\end{equation*}
Combining this with (\ref{psi G preliminary}) and the fact that
$R$ is perturbative,
we obtain
\begin{equation*}
b_k \lesssim c_k + \varepsilon b_k +
\sum_{j \leq k - \varpi} (b_j c_j c_k + \varepsilon b_j^2 b_k)
\end{equation*}
which, in view of our choice of $\varepsilon$, reduces to
\begin{equation*}
b_k \lesssim c_k + c_k \sum_{j \leq k - \varpi} b_j c_j 
\end{equation*}
Squaring and applying Cauchy-Schwarz yields
\begin{equation}
b_k^2 \lesssim  (1 +  \sum_{j \leq k - \varpi} b_j^2 ) c_k^2
\label{SCS}
\end{equation}
Setting
\begin{equation*}
B_k := 1 + \sum_{j < k} b_j^2
\end{equation*}
in (\ref{SCS}) leads to
\begin{equation*}
B_{k+1} \leq B_k (1 + C c_k^2)
\end{equation*}
with $C > 0$ independent of $k$.
Therefore
\begin{equation*}
B_{k + m} 
\leq 
B_k \prod_{\ell = 1}^m (1 + C c_{k + \ell}^2)
\leq
B_k \exp(C \sum_{\ell = 1}^m c_{k + \ell}^2)
\lesssim_{E_0} B_k
\end{equation*}
Since $B_k \to 1$ as $k \to -\infty$, we conclude
\begin{equation*}
B_k \lesssim_{E_0} 1
\end{equation*}
uniformly in $k$, so that, in particular,
\begin{equation}
\sum_{j \in \mathbb{Z}} b_j^2
\lesssim 1
\label{b l2 bounded}
\end{equation}
which, joined with (\ref{SCS}), implies (\ref{Goal}).

The proof of (\ref{Goal2}) is almost an immediate consequence.
A bit of care must be exercised, though. For instance,
we will have summations of terms such as
$C_{k, k_1, k_2, k_3} b_{j_1} b_{j_2} b_{j_3}(\sigma) 2^{-\sigma j_3}$
to control (see for instance Corollary \ref{C:MTE1}), where
$\{j_1, j_2, j_3\}$ is some permutation of $\{k_1, k_2, k_3\}$.
Clearly such a term does not sum over $j_3 \ll C$.
The way out, however, is straightforward.
Recall that we need only sum over $Z_1(k), Z_2(k), Z_3(k)$,
defined in (\ref{Z def}).
If we encounter a sum over $Z_1(k)$, bound the $k_3$ term
with $b_{k_3}(\sigma) 2^{-\sigma k_3}$ and the remaining two
terms with $b_{k_1}$ and $b_{k_2}$. Over $Z_2(k)$, always
bound the $k_1$ term by $b_{k_1}(\sigma) 2^{-\sigma k_1}
\leq b_{k_1}(\sigma) 2^{-\sigma k}$.
Similarly, on $Z_3(k)$ we also bound the $k_1$ term by
$b_{k_1}(\sigma) 2^{-\sigma k_1}$.
Such a strategy suffices for controlling the perturbative terms.

We obtain
\begin{equation*}
b_k(\sigma) \lesssim c_k(\sigma) + \varepsilon b_k(\sigma)
+ \sum_{j \leq k - \varpi} (b_j c_j c_k(\sigma) + \varepsilon b_j^2 b_k(\sigma))
\end{equation*}
which is enough to prove (\ref{Goal2}) in view of (\ref{b l2 bounded}).

Equipped with Theorem \ref{T:EnvelopeTheorem}, we conclude
Corollary \ref{C:ETcor} as in \cite[\S4.6]{Sm10}.

\bibliography{SMbib}
\bibliographystyle{amsplain}

\end{document}